\pgfplotsset{width = 5.4cm, compat = 1.18}
\renewcommand{\P}{\mathbb{P}}
\renewcommand{\Pr}{\mathrm{P}}
\newcommand{\R}{\mathbb{R}}
\newcommand{\E}{\mathbb{E}}
\newcommand{\var}{\text{Var}}
\newcommand{\bm}{\boldsymbol}
\newcommand{\xvec}{\mbox{\bf x}}
\newcommand{\Xvec}{\mbox{\bf X}}
\newcommand{\Yvec}{\mbox{\bf Y}}
\newcommand{\yvec}{\mbox{\bf y}}
\newcommand{\thetavec}{\mbox{\boldmath $\theta$}}
\newcommand{\sigmat}{\mbox{\boldmath $\Sigma$}}
\newcommand{\betavec}{\mbox{\boldmath $\beta$}}
\theoremstyle{definition}
\newtheorem{thm}{Theorem}[section]
\newtheorem{rem}{Remark}
\newtheorem{prop}{Proposition}[section]
\newtheorem{lemma}{Lemma}[section]
\title{A nonparametric 
test of spherical symmetry applicable to high dimensional data}
\author{Bilol Banerjee}
\author{Anil K. Ghosh}
\affil{Theoretical Statistics and Mathematics Unit,\\ Indian Statistical Institute, Kolkata}
\date{\null}
\newcommand{\tikzcircle}[2][red,fill=red]{\tikz[baseline=-0.5ex]\draw[#1,radius=#2] (0,0) circle ;}
\newcommand{\tikzcirclev}[2][violet,fill=violet]{\tikz[baseline=-0.5ex]\draw[#1,radius=#2] (0,0) circle ;}
\newcommand{\tikzcirclep}[2][purple,fill=purple]{\tikz[baseline=-0.5ex]\draw[#1,radius=#2] (0,0) circle ;}
\definecolor{applegreen}{rgb}{0.55,0.71,0.0}
\begin{document}

\maketitle

\begin{abstract}
   We develop a test for spherical symmetry of a multivariate distribution $\Pr$ that works well even when the dimension of the data $d$ is larger than the sample size $n$. We propose a non-negative measure of spherical asymmetry $\zeta(\Pr)$ such that $\zeta(\Pr)=0$ if and only if $\Pr$ is spherically symmetric. We construct a consistent estimator of $\zeta(\Pr)$ using the data augmentation method and investigate its large sample properties. The proposed test based on this estimator is calibrated using a novel resampling algorithm. Our test controls the type I error, and it is consistent against general alternatives. We also study its behavior for a sequence of alternatives $(1-\delta_n) F+\delta_n G$, where $\zeta(G)=0$ but $\zeta(F)>0$, and $\delta_n \in [0,1]$. When $\lim\sup\delta_n<1$, for any $G$, the power of our test converges to unity as $n$ increases. However, if $\lim\sup\delta_n=1$, the asymptotic power of our test depends on $\lim n(1-\delta_n)^2$. We establish this by proving the minimax rate optimality of our test over a suitable class of alternatives and showing that it is Pitman efficient when $\lim n(1-\delta_n)^2>0$. Moreover, our test is provably consistent for high-dimensional data even when $d$ grows with $n$. When the center of symmetry is not specified by the null hypothesis, most of the existing tests often fail to satisfy the level property. To take care of this problem, we propose a general recipe for constructing modified tests based on pairwise differences of the observations. Our numerical results amply demonstrate the superiority of the proposed test over some state-of-the-art methods.

\vspace{0.05in}

\noindent\textbf{Keywords:}  Consistency; Contiguous alternatives; Data augmentation; Minimax rate optimality; Pitman efficiency; Spherical symmetry.
\end{abstract}

\section{Introduction}

An inherent property of nature is that it tends to exhibit some form of symmetry within itself. In the nineteenth century, such symmetric patterns were approximated by the normal distribution \citep[see, e.g.,][for a history of statistical methods]{lehmann2012history}. But with time, more general notions of symmetry were introduced. One of the most popular notions is spherical symmetry or elliptic symmetry (i.e., spherical symmetry after standardization) \citep[see, e.g.][]{ chmielewski1981elliptically,fang1990book,fourdrinier2018shrinkage}. A $d$-dimensional ($d>1$) random vector ${\bf X}$ is said to follow a spherically symmetric distribution  (about the origin) if ${\bf X}$ has the same distribution as $ {\bf H}{\bf X}$ (i.e., ${\bf X} \stackrel{D}{=} {\bf H}{\bf X}$) for any $d \times d$ orthogonal matrix ${\bf H}$.  This is an important class of distributions, and several statistical methods have been developed motivated by the sphericity or the ellipticity of the underlying distribution
\citep[see, e.g.,][]{randles1989distribution, chaudhuri1993sign, jornsten2004clustering, ghosh2005maximum}. Therefore, testing the sphericity of a distribution is an important statistical problem. Several methods have been proposed in the literature for 
testing the sphericity of a distribution $\Pr$
based on a sample $\mathcal{D} = \{{{\bf X}_1,{\bf X}_2,\ldots, {\bf X}_n}\}$ of $n$ independent realizations of the random vector ${\bf X} \sim \Pr$. 
For example, \cite{fang1993} proposed an asymptotically distribution-free test for spherical symmetry using the projection pursuit technique. They computed the Wilcoxon-Mann-Whitney statistic for several pairs of projection directions and used the minimum over all such projection-pairs as their test statistic. However, this is only a necessary test for sphericity and does not have consistency under general alternatives. \cite{kolchinskii1998} proposed a test based on the difference between the empirical spatial rank function and the theoretical spatial rank function under spherical symmetry, where the unknown components of these theoretical ranks were estimated from the data. 
\cite{smith1977} proposed a test statistic for bivariate data using the fact that if ${\bf X}$ is spherically symmetric then $\|{\bf X}\|$ and ${\bf X}/\|{\bf X}\| $ are independent and ${\bf X}/\|{\bf X}\|$ follows a uniform distribution over the perimeter of the unit circle in $\R^{2}$. Later, \cite{baringhaus1991} modified the test statistic and generalized the test to any arbitrary dimension. However, this test involves a complex function of the dimension $d$, which makes it difficult to study its high dimensional behaviour. \cite{diks1999} proposed a Monte Carlo test for multivariate spherical symmetry conditionally on minimal sufficient statistics, but the pairwise distances used in the test statistic need an appropriate scaling for high dimensional data, which is missing from the literature. 
\cite{liang2008} proposed some necessary tests for spherical symmetry based on the fact that under spherical symmetry, ${\bf X}/\|{\bf X}\|$ is uniformly distributed on $\mathcal{S}^{d-1}$, the surface of the unit sphere in $\R^d$. But, they did not consider the independence between $\|{\bf X}\|$ and  ${\bf X}/\|{\bf X}\|$, and therefore the test is not consistent against general alternatives. \cite{henze2014} proposed a test based on characteristic functions and calibrated the test using a bootstrap algorithm. However, this test requires the generation of the uniform grid over the unit sphere, which becomes computationally prohibitive even in moderately large dimensions. \cite{albisetti2020} proposed another test utilizing the fact that ${\bf X}$ is spherically symmetric if and only if $\E\{{\bf u}^{\top} {\bf X}\mid {\bf v}^{\top}{\bf X}\} = 0$ for all ${\bf u}$ and ${\bf v}$ with ${\bf u}^{\top}{\bf v}=0$. They constructed a Kolmogorov-Smirnov-type test statistic over suitable choices of test functions. 
Recently, \cite{huang2023multivariate} proposed some tests for different notions of symmetry using optimal transport for multivariate data. However, these tests are not consistent against general alternatives. 

These above-mentioned tests can be used when the dimension of the data $d$ is small compared to the sample size $n$ (i.e., $d<<n$), and some of them are large-sample consistent for any fixed dimension $d$. However, the applicability of these tests for high-dimensional data (i.e., when $d$ is comparable to or larger than $n$) is not clear. \cite{zou2014} and \cite{feng2017high} proposed tests of sphericity for high-dimensional data where the test statistics were constructed using the multivariate sign function assuming the ellipticity of the underlying distribution.  \cite{ding2020some} proposed a test based on the ratio of traces of different powers of the sample variance-covariance matrix and established its high dimensional consistency. However, these tests may fail when the underlying distribution is not spherically symmetric but ${\bf X}/\|{\bf X}\|\sim$ Unif(${\cal S}^{d-1}$), the uniform distribution on $\mathcal{S}^{d-1}$, (e.g. angular symmetric distribution) or the variance-covariance matrix of the underlying distribution is a constant multiple of the identity matrix. 

To overcome these limitations, in this article, we propose a test of spherical symmetry that is computationally feasible, applicable for a general class of alternatives and consistent for high-dimensional data even when $d$ is much larger than $n$. The efficacy of our method is demonstrated through various theoretical and numerical results in the following sections. A brief summary of our contributions is given below.

\begin{itemize}
    \item In Section 2, we propose a new measure of spherical asymmetry $\zeta(\Pr)$ of a multivariate distribution $\Pr$. It is based on  the fact that ${\bf X}\sim \Pr$ is spherically symmetric if and only if ${\bf X}$ and $\|{\bf X}\| {\bf U}$ are identically distributed, where $\|{\bf X}\|$ and ${\bf U}$ are independent and ${\bf U}\sim$ Unif$(\mathcal{S}^{d-1})$. 
    The proposed measure $\zeta(\Pr)$ is non-negative, and it takes the value zero if and only if $\Pr$ is spherically symmetric (see Proposition \ref{validity}). However, it involves some terms that are not estimable from the observed data $\mathcal{D}$ only. To overcome this, we adopt a data augmentation approach which enables us to estimate $\zeta(\Pr)$ unbiasedly. We study the large sample properties of this estimator { (see Theorem \ref{consistency},~\ref{large-sample-distribution-1}~and \ref{large-sample-distribution-2})} and propose a test that rejects $H_0${ , the null hypothesis of spherical symmetry,} for large values of the estimate. Utilizing the exchangeability among observed and augmented variables, we develop a new resampling algorithm to calibrate the test. We show the resulting test has the desired control over the type I error rate and is consistent against general alternatives. 

    \item In Section 3, we study the asymptotic behaviour of our test under mixture alternatives of the form $(1-\delta_n)F+\delta_n G$, where $G$ is spherically symmetric, but $F$ is non-spherical. For any fixed $d$, $F$ and a sequence $\{\delta_n\}$ that remains bounded away from one, we prove that our test is consistent in the sense of the minimum power over the class of spherical distributions ${\mathcal G}=\{G: \zeta(G)=0\}$, i.e., for any $G \in {\mathcal G}$, the power of our test converges to $1$ as $n$ increases (see Theorem \ref{robustness}). We also establish that our test is minimax rate optimal over the class of alternatives $\mathcal{F} = \{\Pr~:~\zeta(\Pr)>\epsilon\}$ with minimax rate of separation $\epsilon(n)\asymp n^{-1}$ (see {  Theorem} \ref{minimax-lower-bound} and \ref{minimax-upper-bound}). This in turn proves the consistency of the test over a fairly general class of shrinking alternatives when $d$ and $n$ both diverge to infinity. We also establish that the proposed test is Pitman efficient against contamination alternatives for which $n(1-\delta_n)^2$ {  remains} bounded away from $0$ (see Theorem \ref{local-limit} and \ref{local-limit-per-2}). 

    \item In Section 4, extensive simulation studies are carried our to compare the empirical performance of our test with some state-of-the-art methods. We observe that while most of the existing tests fail to detect spherical asymmetry even for moderately high dimensional data, our test works well in high dimensions, even when the dimension is larger than the sample size. 

    \item {  In Section 5, we consider the case where the null hypothesis does not specify the center of symmetry $\thetavec$. In such cases, we can find ${\hat \thetavec}$, an estimate of the center from the data, and carry out a test based on $\Xvec_1-{\hat \thetavec},\Xvec_2-{\hat \thetavec},\ldots,\Xvec_n-{\hat \thetavec}$. If ${\hat \thetavec}$
    is a consistent estimator of $\thetavec$, our test based on these centered observations has large sample consistency (follows from Theorem \ref{thm:consistent-estimation-centering}). However, this may not have a good level property even when the sample size is large. To address this problem, we propose a modified test based on the differences between the observations. An interesting result (see Lemma \ref{thm:key-transformation}) is presented in this context which ensures that the resulting test has desirable level and power properties.  }
    \end{itemize}

In Section 6, we analyze a benchmark dataset where our test significantly outperforms all other tests considered in this article. Our methodology can also be extended to test for other types of symmetry. Some other possible extensions are discussed in Section 7. We also evaluate the exact expression of $\zeta(\Pr)$ for Gaussian distributions (see Section A in the supplementary material) that helps us to understand the behaviour of our test for high dimensional data (see Example 4). All proofs and some auxiliary theoretical results are given in the supplementary
material.

\section{The proposed methodology}

We know that a random vector ${\bf X}$ follows a spherically symmetric distribution if and only if ${\bf X}$ and ${\bf H}{\bf X}$ have the same distribution for any orthogonal matrix ${\bf H}$. However, we can also characterize spherical symmetry using the following lemma.

\vspace{0.025in}
\begin{lemma}
    A $d$-dimensional random vector ${\bf X}$ is spherically symmetric if and only if ${\bf  X} \stackrel{D}{=} \| {\bf X}\| {\bf U}$, where $ {\bf U}$ is independent of $\|{\bf X}\|$, and it is uniformly distributed over $\mathcal{S}^{d-1}$.
\end{lemma}

\vspace{0.05in}
For proof of Lemma 2.1, see page 31 in \cite{fang1990book}. Using this characterization, we construct a new measure of spherical asymmetry for any probability distribution $\Pr$. Let $\varphi_{1}$  be the characteristic function of ${\bf X}\sim{\Pr}$ and $\varphi_{2}$ be that of its spherically symmetric variant  $\|{\bf X}\| {\bf U}$, where ${\bf U} \sim$ Unif(${\mathcal S}^{d-1})$ and $\|{\bf X}\|$ are independent. We define
\vspace{-0.05in}
\begin{equation}
        \zeta_W(\Pr) = \int \big|\varphi_1({\bf t}) - \varphi_{2} ({\bf t})\big|^2 \mathrm{d}W({\bf t}),
    \label{definition}
\end{equation}
where $W$ is a non-negative measure equivalent to the Lebesgue measure { (i.e., $W$ is absolutely continuous with respect to the Lebesgue measure and vice versa)}. The integral is taken in the principal value sense. {  Note that if $W$ is a probability measure, $\zeta_W(\Pr)$ is always finite. Otherwise, one needs to make suitable assumptions on $\Pr$ for the finiteness of $\zeta_W(\Pr)$, such as in the case of energy distance (see \cite{zekely2023book}).} The following proposition proves the characterization property of $\zeta_W(\Pr)$ by showing $\zeta_W(\Pr)\ge 0$, where the equality holds if and only if $\Pr$ is spherically symmetric. 
\begin{prop}
$\zeta_W(\Pr)$ is non-negative and $\zeta_W(\Pr)=0$ if and only if $\Pr$ is a spherically symmetric distribution.
\label{validity}
\end{prop}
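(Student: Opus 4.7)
The plan is to prove both parts of the proposition by unpacking the definition of $\zeta(\Pr)$ and leveraging the characterization given in Lemma 1.1.

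First, I would handle non-negativity, which is essentially immediate. The integrand factors as the non-negative quantity $|\varphi_1(\mathbf{t}) - \varphi_2(\mathbf{t})|^2$ multiplied by the Gaussian weight $\tfrac{d^{d/2}}{(2\pi)^{d/2}} e^{-d\|\mathbf{t}\|^2/2}$, which is strictly positive on $\mathbb{R}^d$. Moreover the integral converges (in the usual, not just principal value, sense) because $|\varphi_1 - \varphi_2|^2 \leq 4$ and the weight integrates to $1$, so the definition is unambiguous and $\zeta(\Pr) \geq 0$.

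Next I would prove the forward implication: if $\Pr$ is spherically symmetric, then by Lemma 1.1, $\mathbf{X}$ and $\|\mathbf{X}\|\mathbf{U}$ have the same distribution, so their characteristic functions coincide, $\varphi_1 \equiv \varphi_2$, and hence the integrand is identically zero, giving $\zeta(\Pr) = 0$.

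The only slightly careful step is the converse. Suppose $\zeta(\Pr) = 0$. Since the integrand is non-negative and the Gaussian weight is strictly positive everywhere on $\mathbb{R}^d$, this forces $|\varphi_1(\mathbf{t}) - \varphi_2(\mathbf{t})|^2 = 0$ for Lebesgue-almost every $\mathbf{t} \in \mathbb{R}^d$. Both $\varphi_1$ and $\varphi_2$ are continuous functions on $\mathbb{R}^d$ (characteristic functions are uniformly continuous), so the set where they disagree is open; if it were non-empty it would have positive Lebesgue measure, a contradiction. Hence $\varphi_1(\mathbf{t}) = \varphi_2(\mathbf{t})$ for every $\mathbf{t}$. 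By the uniqueness theorem for characteristic functions, $\mathbf{X} \stackrel{D}{=} \|\mathbf{X}\|\mathbf{U}$, and Lemma 1.1 then yields that $\Pr$ is spherically symmetric.

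The main (mild) obstacle is simply the justification that pointwise equality of $\varphi_1$ and $\varphi_2$ follows from almost everywhere equality; this is where continuity of characteristic functions does the work. Everything else is bookkeeping with the definition.
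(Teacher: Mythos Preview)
Your proposal is correct and follows essentially the same route as the paper's own proof: non-negativity of the integrand gives $\zeta(\Pr)\ge 0$, Lemma~2.1 gives the forward implication, and strict positivity of the Gaussian weight forces $\varphi_1=\varphi_2$ when $\zeta(\Pr)=0$. If anything, your version is slightly more careful, since you explicitly invoke continuity of characteristic functions to upgrade almost-everywhere equality to pointwise equality, a step the paper leaves implicit.
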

For suitable choices of $W$, $\zeta_W(\Pr)$ has nice closed-form expressions. If $W$
is taken as the probability measure corresponding to ${\mathcal N}_d({\bf 0}_d, \frac{1}{d}{\bf I}_d)$, the $d$-dimensional Gaussian distribution with the mean ${\bf 0}_d=(0,0,\ldots,0)^{\top}$ and the dispersion matrix $\frac{1}{d}{\bf I}_d$, where ${\bf I}_d$ is the $d \times d$ identity matrix, the closed form expression is denoted by $\zeta(\Pr)$. We derive this expression using the following lemma.

\begin{lemma}
    For any two non-zero vectors ${\bf X}_1,{\bf X}_2\in \R^d$,
    $$\int \exp\big\{i\langle {\bf t}, {\bf X}_1-{\bf X}_2\rangle\big\} \frac{d^{d/2}}{(2\pi)^{d/2}} e^{-{d\|{\bf t}\|^2}/{2}} {\mathrm d}{\bf t} = \exp\big\{-\frac{1}{2d}\|{\bf X}_1-{\bf X}_2\|^2\big\}.$$
    \label{basis-alt-form}
\end{lemma}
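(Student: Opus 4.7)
The plan is to recognize the kernel $\frac{d^{d/2}}{(2\pi)^{d/2}} e^{-d\|\mathbf{t}\|^2/2}$ as the density of the multivariate Gaussian $N_d(\mathbf{0}, d^{-1} I_d)$, so the integral on the left-hand side is nothing but the characteristic function of this Gaussian evaluated at $\mathbf{X}_1 - \mathbf{X}_2$. The standard formula for the Gaussian characteristic function, $\mathbb{E}\,e^{i \langle \mathbf{t}, \mathbf{Z}\rangle} = e^{-\mathbf{t}^\top \Sigma \mathbf{t}/2}$ for $\mathbf{Z} \sim N_d(\mathbf{0}, \Sigma)$, then gives exactly $\exp\{-\|\mathbf{X}_1-\mathbf{X}_2\|^2/(2d)\}$. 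If one prefers a self-contained derivation rather than quoting this formula, the steps below carry it out.

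First I would set $\mathbf{Y} = \mathbf{X}_1 - \mathbf{X}_2$ and exploit the full separability of both $\langle \mathbf{t}, \mathbf{Y}\rangle = \sum_j t_j Y_j$ and $\|\mathbf{t}\|^2 = \sum_j t_j^2$, using Fubini (the integrand is absolutely integrable since $|e^{i\langle \mathbf{t},\mathbf{Y}\rangle}| = 1$) to reduce the $d$-dimensional integral to a product of one-dimensional Gaussian integrals
\begin{equation*}
\prod_{j=1}^d \int_{\mathbb{R}} \sqrt{\tfrac{d}{2\pi}}\, e^{i t_j Y_j - d t_j^2/2}\, \mathrm d t_j.
\end{equation*}
Next, for each factor I would complete the square in the exponent,
\begin{equation*}
-\tfrac{d}{2} t_j^2 + i t_j Y_j = -\tfrac{d}{2}\bigl(t_j - i Y_j/d\bigr)^2 - \tfrac{Y_j^2}{2d},
\end{equation*}
pull the $\mathbf{Y}$-dependent constant $e^{-Y_j^2/(2d)}$ outside, and argue that the remaining contour-shifted Gaussian integral equals $1$. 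Multiplying the $d$ one-dimensional factors gives $\prod_j e^{-Y_j^2/(2d)} = e^{-\|\mathbf{Y}\|^2/(2d)}$, which is the claimed identity.

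The only nontrivial step is the imaginary translation used when evaluating each one-dimensional integral: a priori, $\int_{\mathbb{R}} e^{-d(t - ic)^2/2}\, \mathrm d t$ is an integral along the shifted contour $\mathbb{R} - ic$ rather than along $\mathbb{R}$. I would justify collapsing it back to the real axis by applying Cauchy's theorem to the entire function $z \mapsto e^{-d z^2/2}$ on the rectangular contour with vertices $\pm R$ and $\pm R - i Y_j/d$, and observing that the vertical sides vanish as $R \to \infty$ because $|e^{-d z^2/2}| = e^{-d(\mathrm{Re}\,z)^2/2}\cdot e^{d(\mathrm{Im}\,z)^2/2}$ decays with $R$ uniformly over the bounded imaginary part. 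This is the single delicate point; everything else is bookkeeping. The stated hypothesis that $\mathbf{X}_1, \mathbf{X}_2$ are nonzero is not actually needed for the identity, which holds for all $\mathbf{X}_1, \mathbf{X}_2 \in \mathbb{R}^d$ (with both sides equal to $1$ when $\mathbf{X}_1 = \mathbf{X}_2$).
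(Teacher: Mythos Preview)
Your proposal is correct and its core approach is identical to the paper's: recognize the weight as the $N_d(\mathbf{0},d^{-1}I_d)$ density, interpret the integral as the characteristic function of that Gaussian evaluated at $\mathbf{X}_1-\mathbf{X}_2$, and read off the answer from the standard formula. The paper stops there, whereas you additionally supply the self-contained completion-of-the-square and contour-shift justification, which is extra but not a different route.
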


\vspace{-0.25in}
Using Lemma \ref{basis-alt-form} we get the following expression of $\zeta(\Pr)$.
\noindent
\begin{thm}
 If ${\bf X}_1,{\bf X}_2$ are independent copies of ${\bf X}\sim \Pr$ and $W$
is the probability measure corresponding to $\mathcal{N}_d({\bf 0}_d, \frac{1}{d}{\bf I}_d)$, then 
    \begin{align}
    \label{eq:closed-form-exp}
        \zeta(\Pr) = \E\big\{\exp\{-\frac{1}{2d}\|{ {\bf X}_1-{\bf X}_2}\|^2\}\big\} + &\E\big\{\exp\{-\frac{1}{2d}\|{ {\bf X}_1^\prime-{\bf X}_2^\prime}\|^2\}\big\}\\ \nonumber & -2\E\big\{\exp\{-\frac{1}{2d}\|{ {\bf X}_1-{\bf X}_2^\prime}\|^2\}\big\},
\vspace{-0.1in}
    \end{align}
    where $ {\bf X}_i^\prime= \|{\bf X}_i\|{\bf U}_i$ for $i=1,2$, and ${\bf U}_1,{\bf U}_2$ are independent and identically distributed (i.i.d.) as Unif $(\mathcal{S}^{d-1})$, which are independent of ${\bf X}_1 $ and ${\bf X}_2$.
    \label{closed-form-expression}
\end{thm}

There are several other choices of $W$ for which we have closed-form expressions for $\zeta_W(\Pr)$. But, unless mentioned otherwise, throughout this article, {  we shall assume $W$ is the probability measure corresponding to $\mathcal{N}_d({\bf 0}_d, \frac{1}{d}{\bf I}_d)$ as considered in Theorem \ref{closed-form-expression} and explore some interesting features of $\zeta(\Pr)$.}

\begin{rem}
    Theorem \ref{closed-form-expression} shows that $\zeta(\Pr)$ can be expressed as a function of the pairwise distance between ${\bf X}_1,{\bf X}_2$ and their spherically symmetric variants ${\bf X}_1^\prime,{\bf X}_2^\prime$. Since, the pairwise distances are invariant under orthogonal transformation, $\zeta(\Pr)$ is also invariant under rotation. In particular, we have $\zeta({\mathcal N}_d({\bf 0},\sigmat)) = \zeta({\mathcal N}_d({\bf 0},{\bf H} \sigmat {\bf H}^T))$ for any orthogonal matrix ${\bf H}$. The exact expression of $\zeta({\mathcal N}_d({\bf0},\sigmat))$ for any positive-semidefinite variance-covariance matrix $\sigmat$ is interesting. Interested readers are referred to Section A in the supplementary document for a detailed derivation. 
\end{rem}

{  

Note that $\zeta(\Pr)$ in \eqref{eq:closed-form-exp} can also be viewed as the maximum mean discrepancy (MMD) \citep[see, e.g.,][]{gretton2007kernel} between the distributions of ${\bf X}$ and ${\bf X}^\prime$ with respect to the kernel $K({\bf x},{\bf y}) = \exp\{-\|{\bf x}-{\bf y}\|^2/(2d)\}$. We know that MMD measures the difference between two distributions by embedding them into a Reproducing Kernel Hilbert Space (RKHS). As long as $W$ is the measure corresponding to a symmetric probability distribution in $\R^d$, we can write $\zeta_W(\Pr)$ as the MMD between the distributions of ${\bf X}$ and ${\bf X}^\prime$ with respect to the kernel $K({\bf x},{\bf y}) = \int \exp\big\{i\langle {\bf t}, {\bf x} - {\bf y} \rangle\big\} \mathrm d W({\bf t}) = \varphi_W({\bf x}-{\bf y})$. This can be viewed as a generalized version of $\zeta(\Pr)$ based on MMD. We can also introduce a scale parameter (bandwidth) within the distribution $W$, which will give us the kernel $K_\sigma({\bf x},{\bf y}) = \varphi_W(\frac{{\bf x}-{\bf y}}{\sigma})$. However, finding the optimal kernel and its associated bandwidth is a difficult problem, and we do not address it here. Our empirical experience suggests that the use of the Gaussian kernel gives highly satisfactory performance. Therefore, throughout this article, we assume that $K({\bf x},{\bf y})$ is the Gaussian kernel and concentrate only on the resulting measure $\zeta(\Pr)$. However, our theoretical findings are valid even if the Gaussian kernel is replaced by other bounded kernels, such as the Laplacian or the Inverse Quadratic kernel.
}



\subsection{Estimation of $\zeta({\Pr})$}
\label{estimation}

Let $\mathcal{D}=\{{\bf X}_1,{\bf X}_2,\ldots, {\bf X}_n\}$ be a random sample of size $n$ from a $d$-dimensional distribution ${\Pr}$. Note that the term 
$\E[K({\bf X}_1,{\bf X}_2)]$ in $\zeta({\Pr})$ can be easily estimated by its empirical analog, but $\zeta(\Pr)$ involves two other terms
$\E[K({\bf X}_1^\prime,{\bf X}_2^\prime)]$ and $\E[K({\bf X}_1,{\bf X}_2^\prime)]$ which are not estimable from $\mathcal{D}$ alone. Therefore, we adopt the following data augmentation approach.
\begin{itemize}
    \item Generate ${\bf U}_1,{\bf U}_2,\ldots, {\bf U}_n$, a random sample of size $n$ from Unif$(\mathcal{S}^{d-1})$ and define $ {\bf X}_i^\prime = \|{\bf X}_i\|{\bf U}_i$ for $i=1,2,\ldots, n$. Also, define $\mathcal{D}^\prime = \{{ ({\bf X}_1,{\bf X}_1^\prime), ({\bf X}_2,{\bf X}_2^\prime), \ldots, ({\bf X}_n,{\bf X}_n^\prime)}\}$ to be the augmented dataset.

    \item Using observations from $\mathcal{D}^\prime$, we propose an estimator of $\zeta(\Pr)$ as
    \begin{align*}
        \hat\zeta_n = {\binom{n}{2}}^{-1}\sum_{i=1}^n\sum_{j=i+1}^n &\Big\{ K({\bf X}_i,{\bf X}_j) + K({\bf X}_i^\prime,{\bf X}_j^\prime)-2 K({\bf X}_i,{\bf X}_j^\prime) \Big\}.
    \end{align*}
\end{itemize}


\noindent Clearly $\hat\zeta_n$ can be viewed as a $U$-statistic with the symmetrized kernel function 
\begin{align}
    g\big({ ({\bf x}_1,{\bf x}_1^\prime),({\bf x}_2,{\bf x}_2^\prime)}\big) = K({\bf x}_1,{\bf x}_2)+K({\bf x}_1^\prime,{\bf x}_2^\prime) -K({\bf x}_1,{\bf x}_2^\prime)-K({\bf x}_2,{\bf x}_1^\prime).
    \label{kernel-function}
\end{align}
It is easy to see that $\hat{\zeta}_n$ is an unbiased estimator of $\zeta(\Pr)$. Since $g$ is a bounded kernel, using the bounded difference inequality, we can establish the following bound for the deviation of $\hat\zeta_n$ from its population analog $\zeta(\Pr)$. 

\vspace{0.05in}
\begin{thm}
    If ${\bf X}_1,{\bf X}_2,\ldots, {\bf X}_n$ are independent copies of a $d$-dimensional random vector ${\bf X} \sim {\Pr}$, then 
    $\P\Big\{\big|\hat\zeta_n - \zeta({\Pr})\big|>\epsilon\Big\} \leq 2\exp\{-\frac{n\epsilon^2}{32}\},$
    and this inequality holds irrespective of the dimension $d$.
    \label{consistency}
\end{thm}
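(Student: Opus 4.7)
The plan is to apply McDiarmid's bounded difference inequality to $\hat\zeta_n$, viewed as a function of the $n$ independent pairs $({\bf X}_i,{\bf U}_i)$, $i=1,\dots,n$, where ${\bf U}_i$ is the Unif$({\mathcal S}^{d-1})$ variate used to construct ${\bf X}_i'=\|{\bf X}_i\|{\bf U}_i$. Two preparatory observations are needed. First, unbiasedness: since ${\bf X}_1,{\bf X}_2$ are i.i.d.\ and ${\bf U}_1,{\bf U}_2$ are i.i.d.\ and independent of the ${\bf X}_i$'s, $\E[K({\bf X}_1,{\bf X}_2')]=\E[K({\bf X}_2,{\bf X}_1')]$, so the expected value of the symmetric kernel $g$ in \eqref{kernel-function} equals $\zeta(\Pr)$ by Theorem \ref{closed-form-expression}, and hence $\E[\hat\zeta_n]=\zeta(\Pr)$. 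Second, boundedness: since $K({\bf x},{\bf y})=\exp\{-\|{\bf x}-{\bf y}\|^2/(2d)\}\in[0,1]$, each evaluation of $g$ lies in $[-2,2]$, so for any two possible inputs the difference of $g$-values is at most $4$.

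Next I would quantify the bounded-difference constants. Fix $k\in\{1,\dots,n\}$ and consider replacing the pair $({\bf X}_k,{\bf U}_k)$ by an independent copy $(\tilde{\bf X}_k,\tilde{\bf U}_k)$ while keeping the other $n-1$ pairs fixed. Exactly $n-1$ of the $\binom{n}{2}$ kernel evaluations in $\hat\zeta_n$ involve index $k$; each such evaluation changes by at most $4$ in absolute value, while the remaining evaluations are unchanged. Therefore the total change in $\hat\zeta_n$ is bounded by
\[
c_k \;=\; \binom{n}{2}^{-1}\,(n-1)\cdot 4 \;=\; \frac{8}{n},
\]
uniformly in $k$. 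Crucially this constant does not involve $d$, which is what will yield a dimension-free tail bound.

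Finally I would plug these constants into McDiarmid's inequality to obtain, for every $\epsilon>0$,
\[
\P\!\left\{\big|\hat\zeta_n-\zeta(\Pr)\big|>\epsilon\right\}\;\le\;2\exp\!\left(-\frac{2\epsilon^2}{\sum_{k=1}^n c_k^2}\right)\;=\;2\exp\!\left(-\frac{2\epsilon^2}{n\cdot(8/n)^2}\right)\;=\;2\exp\!\left(-\frac{n\epsilon^2}{32}\right),
\]
which is exactly the claimed inequality, with no dependence on $d$ because neither the range of $K$ nor the bounded-difference constant $c_k$ depends on $d$.

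There is no real obstacle here; the only subtlety is getting the bookkeeping right when checking that perturbing a single pair $({\bf X}_k,{\bf U}_k)$ affects precisely $n-1$ summands (and also noting that the augmented vector ${\bf X}_k'$ is a deterministic function of that pair, so McDiarmid's independence hypothesis applies to the original pairs rather than to the augmented sample viewed as $2n$ variables).
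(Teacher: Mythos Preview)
Your proposal is correct and follows essentially the same route as the paper: both prove the bound by applying the bounded difference (McDiarmid) inequality with $c_k=8/n$, obtained from $|g|\le 2$ and the fact that exactly $n-1$ kernel evaluations involve index $k$. Your explicit remark that the independent inputs are the pairs $({\bf X}_i,{\bf U}_i)$ (so that ${\bf X}_i'$ is a deterministic function of a single coordinate) is a helpful clarification the paper leaves implicit.
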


\vspace{0.05in}
Theorem \ref{consistency} shows that ${\hat \zeta}_n$ is a strongly consistent estimator of $\zeta(\Pr)$ (follows from the Borel-Cantelli Lemma) and the exponential bound is free from $d$. The large sample distribution of $\hat\zeta_n$ can be derived using the theory of $U$-statistics \citep[see][]{lee2019u}. The asymptotic null distribution of ${\hat \zeta}_n$ is given by the following theorem. 

\vspace{0.05in}
\begin{thm}
Let ${\bf X}_1,{\bf X}_2,\ldots, {\bf X}_n$ be independent copies of the random vector ${\bf X}$, which follows a spherically symmetric distribution ${\Pr}$. Define ${\bf X}_i^\prime = \|{\bf X}_i\|{\bf U}_i$ for $i=1,2,\ldots,n$, where ${\bf U}_1,{\bf U}_2,\ldots,{\bf U}_n$ are i.i.d. Unif$(\mathcal{S}^{d-1})$. Let $\lambda_k$ ($k=1,2,\ldots$) be the eigenvalue corresponding to the eigenfunction $\psi_k$ of the integral equation
$$\E\{g\big(({\bf x},{\bf x}^\prime),({\bf X},{\bf X}^\prime)\big)\psi_k\big({\bf X},{\bf X}^\prime)\} = \lambda_k \psi_{k}({\bf x},{\bf x}^\prime),$$
where $g$ is as in equation (\ref{kernel-function}). Then as $n$ goes to infinity, $n\hat\zeta_n\stackrel{D}{\longrightarrow}\sum_{k=1}^\infty \lambda_k (Z_i^2-1),$
where $\{Z_k: k\ge 1\}$ is a sequence of i.i.d. standard normal variables.
\label{large-sample-distribution-1}
\end{thm}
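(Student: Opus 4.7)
The plan is to recognize $\hat\zeta_n$ as a second-order $U$-statistic with symmetric bounded kernel $g$ from equation (\ref{kernel-function}) applied to the i.i.d.\ pairs $W_i := ({\bf X}_i, {\bf X}_i')$ ($i = 1, \ldots, n$); these are independent across $i$ since the ${\bf U}_i$'s are drawn independently of the data and of each other. Because $K({\bf x},{\bf y}) \in [0,1]$, we have $|g| \le 2$, so all moment assumptions required by the classical asymptotic theory for degenerate $U$-statistics are automatically satisfied. The task therefore reduces to (a) establishing that $g$ is first-order degenerate under $H_0$, and (b) invoking the spectral representation of the associated integral operator.

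For (a), define $h_1({\bf x}, {\bf x}') := \E[g(({\bf x}, {\bf x}'), ({\bf X}, {\bf X}'))]$ and expand using the definition of $g$:
\begin{align*}
h_1({\bf x}, {\bf x}') = \E[K({\bf x}, {\bf X})] + \E[K({\bf x}', {\bf X}')] - \E[K({\bf x}, {\bf X}')] - \E[K({\bf X}, {\bf x}')].
\end{align*}
Under $H_0$, Lemma 1.1 gives ${\bf X} \stackrel{D}{=} \|{\bf X}\|{\bf V}$ with ${\bf V} \sim \mathrm{Unif}(\mathcal{S}^{d-1})$ independent of $\|{\bf X}\|$; by construction ${\bf X}' = \|{\bf X}\|{\bf U}$ with an independent ${\bf U}$ of the same law. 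Hence ${\bf X}$ and ${\bf X}'$ have identical marginal distributions. The first and third terms in the display therefore cancel (fix ${\bf x}$ and take expectation over the second argument only), as do the second and fourth (using symmetry of $K$), yielding $h_1 \equiv 0$.

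For (b), the symmetric bounded kernel $g$ induces a compact, self-adjoint, Hilbert-Schmidt operator $\mathcal{T}$ on $L^{2}(\mathcal{L}({\bf X}, {\bf X}'))$ via $(\mathcal{T}\psi)({\bf x}, {\bf x}') = \E[g(({\bf x}, {\bf x}'), ({\bf X}, {\bf X}'))\psi({\bf X}, {\bf X}')]$, with eigenpairs $\{(\lambda_k, \psi_k)\}$ as in the statement. Using the $L^2$-expansion $g(w_1, w_2) = \sum_k \lambda_k \psi_k(w_1)\psi_k(w_2)$, one writes
\begin{align*}
n\hat\zeta_n = \sum_k \lambda_k\Bigl\{\Bigl(n^{-1/2}\textstyle\sum_{i=1}^n \psi_k(W_i)\Bigr)^{\!2} - n^{-1}\textstyle\sum_{i=1}^n \psi_k(W_i)^2\Bigr\} + o_P(1),
\end{align*}
and the degeneracy $h_1 \equiv 0$ ensures $\E[\psi_k(W_i)] = 0$ for every $k$ with $\lambda_k \ne 0$. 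The multivariate CLT and LLN then yield convergence of each bracketed quantity to $Z_k^2 - 1$ with $\{Z_k\}$ i.i.d.\ $N(0,1)$; a standard truncation argument, controlled by $\sum_k \lambda_k^2 = \E g(W_1, W_2)^2 < \infty$, upgrades this to convergence of the full series and completes the proof.

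The main obstacle is precisely the degeneracy calculation in (a); the remainder is a textbook application of degenerate $U$-statistic theory (see, e.g., \citealp{lee2019u}). A small subtlety worth flagging is that although the entries of each pair $W_i = ({\bf X}_i, {\bf X}_i')$ are dependent through the shared radius $\|{\bf X}_i\|$, this dependence is internal to each pair and does not disturb the i.i.d.\ structure across $i$ that the standard theory requires.
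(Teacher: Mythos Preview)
Your proposal is correct and follows essentially the same route as the paper: verify that the first-order projection $h_1$ vanishes under $H_0$ because ${\bf X}$ and ${\bf X}'$ share the same marginal law, then invoke the standard limit theorem for first-order degenerate $U$-statistics (the paper simply cites \citealp{lee2019u}, p.~79, whereas you spell out the spectral expansion and truncation behind that result). Your added remarks on boundedness of $g$ and the across-$i$ independence of the pairs $W_i$ are accurate and make explicit what the paper leaves implicit.
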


\vspace{0.05in}
In Theorem \ref{large-sample-distribution-1}, the limiting null distribution of $\hat\zeta_n$ (after appropriate adjustment for location and scale) turns out to be a weighted sum of independent chi-squares due to the first order degeneracy of $g$ under $H_0$ (the null hypothesis of symmetry). However, under $H_1$ (the alternative hypothesis), $g$ is a non-degenerate kernel. Therefore, the limiting distribution of $\hat\zeta_n$ (after appropriate adjustment for location and scale) turns out to be Gaussian, which is asserted by the following theorem.

\vspace{0.05in}
\begin{thm}
Let ${\bf X}_1,{\bf X}_2,\ldots, {\bf X}_n$ be independent copies of a random vector ${\bf X}$, which follows a non-spherical distribution ${\Pr}$. Define ${\bf X}_i^\prime = \|{\bf X}_i\|{\bf U}_i$ where ${\bf U}_1,{\bf U}_2,\ldots,{\bf U}_n$ is an i.i.d. Unif$(\mathcal{S}^{d-1})$. Then as $n$ goes to infinity,
$\sqrt{n}\big(\hat\zeta_n-\zeta(\Pr)\big)\stackrel{D}{\longrightarrow}{\mathcal N}_1(0,4\sigma^2),$
where $g$ is as defined in equation (\ref{kernel-function}) and $\sigma^2 = \var\Big\{\E\big\{g\big(({\bf X}_1,{\bf X}_1^\prime),({\bf X}_2,{\bf X}_2^\prime)\big)\mid ({\bf X}_1,{\bf X}_1^\prime)\big\}\Big\}$. 
\label{large-sample-distribution-2}
\end{thm}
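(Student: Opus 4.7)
The plan is to recognize $\hat\zeta_n$ as a second-order $U$-statistic built from i.i.d.\ observations and then apply the classical CLT for non-degenerate $U$-statistics (Hoeffding); the only non-routine part will be verifying that the first-order Hoeffding projection is non-degenerate under the alternative.

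First I would observe that since ${\bf U}_1,\ldots,{\bf U}_n$ are i.i.d.\ and drawn independently of ${\bf X}_1,\ldots,{\bf X}_n$, the augmented pairs $Z_i=({\bf X}_i,{\bf X}_i')$ are themselves i.i.d.\ in $\R^d\times\R^d$. Hence $\hat\zeta_n=\binom{n}{2}^{-1}\sum_{i<j}g(Z_i,Z_j)$ is a standard second-order $U$-statistic on i.i.d.\ data. The kernel $g$ in (\ref{kernel-function}) is symmetric in its two arguments (since $K$ is symmetric) and satisfies $|g|\le 4$ because $0\le K\le 1$, so in particular $\E[g(Z_1,Z_2)^2]<\infty$; and by Theorem~\ref{closed-form-expression} we already know $\E[g(Z_1,Z_2)]=\zeta(\Pr)$. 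Provided $\sigma^2>0$, Hoeffding's CLT for non-degenerate $U$-statistics then delivers $\sqrt{n}(\hat\zeta_n-\zeta(\Pr))\stackrel{D}{\longrightarrow}N(0,4\sigma^2)$. A direct expansion of the first-order projection $g_1({\bf x},{\bf x}')=\E[g(({\bf x},{\bf x}'),({\bf X}_2,{\bf X}_2'))]$ gives the convenient form $g_1({\bf x},{\bf x}')=f({\bf x})-f({\bf x}')$, where $f({\bf y})=\E K({\bf X}_2,{\bf y})-\E K({\bf X}_2',{\bf y})$ is the MMD witness function between the law of ${\bf X}$ and that of $\|{\bf X}\|{\bf U}$, so $\sigma^2=\var(g_1(Z_1))=\var(f({\bf X}_1)-f({\bf X}_1'))$ matches the expression in the theorem.

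The only genuine obstacle is therefore showing $\sigma^2>0$ under the alternative, without which the stated Gaussian limit would collapse to a point mass at $0$. Suppose for contradiction that $f({\bf X}_1)-f({\bf X}_1')=c$ almost surely. Taking expectations and using that $({\bf X}_1,{\bf X}_2')\stackrel{D}{=}({\bf X}_2,{\bf X}_1')$ together with Theorem~\ref{closed-form-expression} forces $c=\zeta(\Pr)>0$. Now condition on $\|{\bf X}_1\|=r$ for some $r>0$ carrying positive mass (such an $r$ must exist, for otherwise $\Pr$ would be the point mass at the origin, which is spherically symmetric and contradicts the alternative); on this event ${\bf X}_1'$ is uniform on $r\mathcal{S}^{d-1}$ and independent of ${\bf X}_1$. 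Fixing any ${\bf a}$ in the conditional support of ${\bf X}_1$ yields $f({\bf b})=f({\bf a})-c$ for almost every ${\bf b}\in r\mathcal{S}^{d-1}$ with respect to the uniform measure. Since $K$ is continuous and bounded, $f$ is the difference of two Gaussian convolutions and hence continuous, so the equality extends to every ${\bf b}\in r\mathcal{S}^{d-1}$; taking ${\bf b}={\bf a}$, which itself lies on $r\mathcal{S}^{d-1}$, gives $c=0$, contradicting $c=\zeta(\Pr)>0$. Therefore $\sigma^2>0$, and Hoeffding's CLT produces the claimed non-degenerate Gaussian limit.
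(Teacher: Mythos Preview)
Your approach coincides with the paper's: both observe that $\hat\zeta_n$ is a second-order $U$-statistic on the i.i.d.\ augmented pairs $({\bf X}_i,{\bf X}_i')$ with bounded symmetric kernel $g$, and both invoke Hoeffding's CLT for non-degenerate $U$-statistics. The paper's proof is in fact terser than yours: it merely \emph{asserts} that $g_1$ is non-degenerate under $H_1$ and then cites the standard result, whereas you actually supply an argument for $\sigma^2>0$. In that sense your proof is more complete.

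There is one small slip in your non-degeneracy argument. You write ``condition on $\|{\bf X}_1\|=r$ for some $r>0$ carrying positive mass (such an $r$ must exist, for otherwise $\Pr$ would be the point mass at the origin).'' This parenthetical is false: if $\|{\bf X}_1\|$ has a continuous distribution, no single $r$ carries positive mass, yet $\Pr$ is certainly not a point mass. The fix is immediate and does not require atoms at all: since ${\bf X}_1'=\|{\bf X}_1\|{\bf U}_1$ with ${\bf U}_1$ independent of ${\bf X}_1$, the a.s.\ identity $f({\bf X}_1)-f(\|{\bf X}_1\|{\bf U}_1)=c$ and Fubini give, for $\Pr$-a.e.\ ${\bf a}$, that $f(\|{\bf a}\|{\bf u})=f({\bf a})-c$ for $\mu$-a.e.\ ${\bf u}\in\mathcal{S}^{d-1}$. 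Pick any such ${\bf a}$ with $\|{\bf a}\|>0$ (which exists unless $\Pr=\delta_{\bf 0}$, a spherical law); continuity of $f$ extends the identity to all ${\bf u}$, and taking ${\bf u}={\bf a}/\|{\bf a}\|$ yields $c=0$, the desired contradiction. With this adjustment your argument is complete.
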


{ 
\begin{rem}
    Recently, \cite{tang2024elliptic} proposed a test of elliptic symmetry using the kernel mean embedding of the distribution of the standardized random vectors. A similar framework can also be adapted for testing spherical symmetry for multivariate distributions. However, their theoretical results rely heavily on the Frechet differentiability of their proposed statistical functionals (see Theorem 2 in \cite{tang2024elliptic}), which is difficult to verify in practice. In contrast, we do not need any such assumptions on the underlying distribution $\Pr$ or the functional $\zeta(\Pr)$. Also, unlike their test, our method is applicable to high dimensional data even when the dimension is much larger than the sample size (see Theorem \ref{high-dim-consistency}).  
\end{rem}
}

\subsection{Test of Spherical Symmetry}
\label{test}
Since, $\hat\zeta_n$ is a strongly consistent estimator of $\zeta(\Pr)$ (see Theorem \ref{consistency}), 
from Proposition~\ref{validity}, it is clear that under $H_0$, $\hat\zeta_n$ converges almost surely to zero, but under $H_1$, it converges to a positive constant. Therefore, the power of a test that rejects $H_0$ for higher values of $\hat\zeta_n$ converges to one as the sample size increases. However, it is difficult to find the critical value based on the asymptotic null distribution of $n\hat\zeta_n$ (see Theorem \ref{large-sample-distribution-1}) since it involves an $\ell_2$ sequence $\{\lambda_k\}$, which depends on the underlying distribution $\Pr$. Though our test has a similarity with the test based on Maximum Mean Discrepancy (MMD) \citep[see][]{gretton2012kernel}, it can not be calibrated correctly using the permutation method. This is due to the existing dependence between the observed and the augmented data even under $H_0$. However, under $H_0$, ${\bf X}_i$ and ${\bf X}_i^\prime$ are exchangeable for every $i=1,2,\ldots, n$. Therefore, under $H_0$, a random swap between ${\bf X}_i$ and ${\bf X}_i^\prime$ does not change the distribution of $\hat\zeta_n$. Utilizing this fact, we construct a novel resampling algorithm to compute the cut-off. The algorithm is given below.

\vspace{0.1in}
\noindent\fbox{%
    \parbox{1\textwidth}{%
\vspace{-0.1in}
\begin{enumerate}
    \item[{ }] {\underline {Resampling algorithm}} 
    \item[A.] Given the augmented data $\mathcal{D}^\prime$, compute the test statistic $\hat\zeta_n$.
    \item[B.] Let $\pi = (\pi(1),\ldots, \pi(n))$ be an element in $\{0,~1\}^n$. Define ${{\bf Y}_i} = \pi(i) {\bf X}_i + (1-\pi(i)) {{\bf X}_i^\prime}$ and ${{\bf Y}_i^\prime } = (1-\pi(i)) {{\bf X}_i}+\pi(i) {{\bf X}_i^\prime}$  for $i=1,2,\ldots,n$. Use ${ ({\bf Y}_1,{\bf Y}_1^\prime),\ldots, ({\bf Y}_n,{\bf Y}_n^\prime)}$ to compute $\hat{\zeta}_n(\pi)$, the resampling analogue of $\hat\zeta_n$.
    \item[C.] Repeat step B for all possible $\pi$ to get the critical value for a level $\alpha$ ($0<\alpha<1$) test given by 
    $$c_{1-\alpha} = \inf\{t\in\R: \frac{1}{2^n}\sum_{\pi\in \{0,~1\}^n}\text{I}[\hat\zeta_n(\pi)\leq t]\geq 1-\alpha\}.$$
\end{enumerate}
\vspace{-0.1in}
   }
}

\vspace{0.1in}
Naturally, we reject $H_0$ if $\hat\zeta_n$ is larger than $c_{1-\alpha}$. Note that the p-value of this conditional test is given by 
\begin{align}
    p_n = \frac{1}{2^n}\sum_{\pi\in \{0,~1\}^n}\text{I}[\hat\zeta_n(\pi)\geq \hat\zeta_n].
    \label{eq:pvalue}
\end{align}
So, alternatively, we can reject $H_0$ if $p_n<\alpha$. The following lemma shows that this resampling algorithm gives a valid level $\alpha$ test.

\vspace{0.05in}
\begin{lemma}
    Let $\hat\zeta_n$ be the estimator of $\zeta(\Pr)$ as defined in \eqref{closed-form-expression}. If $p_n$ denotes the conditional p-value as defined in \eqref{eq:pvalue}, then under $H_0: \zeta(\Pr)=0$, we have $\P\{p_n<\alpha\}\leq \alpha$ irrespective of the values of $n$ and $d$.
    \label{level-property}
\end{lemma}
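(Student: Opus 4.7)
\medskip
\noindent\textbf{Proof proposal for Lemma \ref{level-property}.}
The plan is to run the standard randomization/permutation test argument on the group $G = \{0,1\}^n$ acting on the augmented sample by coordinate-wise swaps. The only non-standard ingredient is verifying that, under $H_0$, each swap leaves the joint distribution of the augmented data invariant, and this is where spherical symmetry is used.

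First, I would establish pairwise exchangeability under $H_0$. By Lemma~1, if $\Pr$ is spherically symmetric then ${\bf X}_i \stackrel{D}{=} \|{\bf X}_i\| {\bf V}_i$, where ${\bf V}_i \sim \mathrm{Unif}(\mathcal{S}^{d-1})$ is independent of $\|{\bf X}_i\|$. Since ${\bf X}_i^\prime = \|{\bf X}_i\|{\bf U}_i$ with ${\bf U}_i$ another independent $\mathrm{Unif}(\mathcal{S}^{d-1})$ variable, I can rewrite
\[
({\bf X}_i,{\bf X}_i^\prime) \stackrel{D}{=} \big(\|{\bf X}_i\|{\bf V}_i,\ \|{\bf X}_i\|{\bf U}_i\big),
\]
where $\|{\bf X}_i\|$, ${\bf V}_i$, ${\bf U}_i$ are mutually independent. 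Swapping ${\bf V}_i$ with ${\bf U}_i$ preserves the joint distribution, so $({\bf X}_i,{\bf X}_i^\prime) \stackrel{D}{=} ({\bf X}_i^\prime,{\bf X}_i)$. Because the $n$ pairs are independent, this gives, for every $\pi \in \{0,1\}^n$,
\[
\big(({\bf Y}_1,{\bf Y}_1^\prime),\ldots,({\bf Y}_n,{\bf Y}_n^\prime)\big) \stackrel{D}{=} \big(({\bf X}_1,{\bf X}_1^\prime),\ldots,({\bf X}_n,{\bf X}_n^\prime)\big)
\]
under $H_0$, where $({\bf Y}_i,{\bf Y}_i^\prime)$ is the $\pi$-swapped pair as defined in step (II) of the resampling algorithm.

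Next, I would lift this to joint invariance of the family of resampled statistics. Since $\hat\zeta_n(\pi)$ is a fixed measurable function of the $\pi$-swapped augmented data, and since the swap group $G=\{0,1\}^n$ is abelian with $\pi \oplus \pi = \mathbf{0}$, the above distributional invariance implies that for any fixed $\pi_0 \in G$,
\[
\big(\hat\zeta_n(\pi)\big)_{\pi \in G} \stackrel{D}{=} \big(\hat\zeta_n(\pi \oplus \pi_0)\big)_{\pi \in G}.
\]
In other words, the joint law of the $2^n$-tuple of resampled statistics is invariant under the shift action of $G$ on the index set. Writing $T_0 = \hat\zeta_n = \hat\zeta_n(\mathbf{1})$ for the observed statistic (the $\mathbf{1}$-swap is the identity), this says that the rank of $T_0$ among $\{\hat\zeta_n(\pi): \pi \in G\}$ has the same distribution as the rank of $\hat\zeta_n(\pi_0)$ for a uniformly chosen $\pi_0$, and hence is stochastically bounded below by the uniform distribution on $\{1,2,\ldots,2^n\}$ (after handling ties by the usual convention of counting $\{\hat\zeta_n(\pi) \geq \hat\zeta_n\}$, which can only inflate the rank).

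Finally, I would conclude: by the definition \eqref{eq:pvalue}, $p_n = 2^{-n}|\{\pi : \hat\zeta_n(\pi) \geq \hat\zeta_n\}|$ is exactly the rank of $T_0$ (from the top, with ties broken in its favour) divided by $2^n$. The invariance argument above, combined with a standard counting lemma for randomization p-values, yields $\P\{p_n < \alpha\} \leq \alpha$ for every $\alpha \in (0,1)$, every sample size $n$, and every dimension $d$. The only step that really uses structure of the problem is the pairwise exchangeability under $H_0$, and that step hinges on the uniform-on-the-sphere characterization from Lemma~1; the rest is a template argument for conditional/randomization tests. I do not anticipate a serious technical obstacle, though mild care is needed for the tie-handling in the definition of $p_n$.
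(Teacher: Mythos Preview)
Your proposal is correct and follows essentially the same approach as the paper: both arguments hinge on the exchangeability $({\bf X}_i,{\bf X}_i^\prime)\stackrel{D}{=}({\bf X}_i^\prime,{\bf X}_i)$ under $H_0$, and then invoke the standard randomization-test validity argument. The paper's version is terser (it passes directly from the distributional equality $(\hat\zeta_n,c_{1-\alpha})\stackrel{D}{=}(\hat\zeta_n(\pi),c_{1-\alpha})$ to conditioning on $\mathcal{D}^\prime$ and averaging over a uniform $\pi$), whereas you spell out the group action and the rank argument more explicitly, but the substance is the same.
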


\vspace{0.05in}
Interestingly, we can control the threshold $c_{1-\alpha}$ by a deterministic sequence that does not depend on $d$ and converges to $0$ as $n$ increases. This is shown by the following theorem.

\vspace{0.05in}
\begin{lemma}
    If ${\bf X}_1,{\bf X}_2,\ldots, {\bf X}_n$ are independent copies of a $d$-dimensional random vector ${\bf X}\sim \Pr$, then for any $\alpha~ (0<\alpha<1)$, the inequality $c_{1-\alpha}\leq 2(\alpha (n-1))^{-1}$ holds with probability one.
    \label{cutoff-bound}
\end{lemma}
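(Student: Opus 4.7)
The plan is to view $\hat{\zeta}_n(\pi)$, conditional on the augmented data $\mathcal{D}'$, as a random variable under the uniform law on $\pi\in\{0,1\}^n$, and to control its $(1-\alpha)$-quantile by a second-moment argument. The structural observation that makes everything work is that the kernel $g((a,b),(c,d)) = K(a,c)+K(b,d)-K(a,d)-K(b,c)$ is antisymmetric under swapping the two components of either of its arguments. Encoding $\pi$ by Rademacher signs $\epsilon_i := 2\pi(i)-1\in\{-1,+1\}$ (so $\epsilon_i=+1$ means ``no swap at index $i$'' and $\epsilon_i=-1$ means ``swap''), this antisymmetry gives the clean bilinear representation
\[
\hat{\zeta}_n(\pi) \;=\; \binom{n}{2}^{-1}\sum_{i<j}\epsilon_i\epsilon_j\,g_{ij}, \qquad g_{ij}:=g\bigl(({\bf X}_i,{\bf X}_i'),({\bf X}_j,{\bf X}_j')\bigr),
\]
and under the uniform law on $\pi$ the $\{\epsilon_i\}$ are i.i.d.\ Rademacher.

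From this representation the first two conditional moments given $\mathcal{D}'$ are elementary. Independence and mean zero of the $\epsilon_i$'s give $\E_\pi[\hat{\zeta}_n(\pi)]=0$. For the second moment, Rademacher orthogonality, $\E[\epsilon_i\epsilon_j\epsilon_k\epsilon_l]=0$ unless $(i,j)=(k,l)$ under the constraints $i<j$, $k<l$, collapses the double sum to a diagonal, yielding
\[
\E_\pi\bigl[\hat{\zeta}_n(\pi)^2\bigr] \;=\; \binom{n}{2}^{-2}\sum_{i<j}g_{ij}^2 \;\leq\; \frac{8}{n(n-1)},
\]
where the inequality uses the crude bound $|g_{ij}|\leq 2$, which follows from $K(\cdot,\cdot)\in[0,1]$ and the $\pm 1$ coefficient pattern of $g$.

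The last step converts this moment bound into a quantile bound by Markov's inequality applied to the positive part. Since $\hat{\zeta}_n(\pi)$ has conditional mean zero, $\E_\pi[\hat{\zeta}_n(\pi)_+]=\tfrac12\E_\pi[|\hat{\zeta}_n(\pi)|]\leq \tfrac12\sqrt{\E_\pi[\hat{\zeta}_n(\pi)^2]}$ by Cauchy--Schwarz, so
\[
\P_\pi\bigl(\hat{\zeta}_n(\pi)>t\bigr) \;\leq\; \frac{\E_\pi[\hat{\zeta}_n(\pi)_+]}{t} \;\leq\; \frac{1}{t}\sqrt{\frac{2}{n(n-1)}}.
\]
Inverting this at level $\alpha$ gives $c_{1-\alpha} \leq \alpha^{-1}\sqrt{2/[n(n-1)]}$, and the algebraic check $(n-1)\leq 2n$ upgrades this to $c_{1-\alpha}\leq 2/[\alpha(n-1)]$. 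Since the bound is deterministic given the augmented data $\mathcal{D}'$, it holds almost surely. The only real insight needed is the Rademacher rewriting; the one pitfall to avoid is applying Markov directly to $\hat{\zeta}_n(\pi)$ itself (it is not non-negative), which is why the ``positive part $+$ mean zero'' detour via $\E[W_+]=\tfrac12\E[|W|]$ is used.
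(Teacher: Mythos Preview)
Your argument is correct and actually yields a slightly sharper intermediate bound than the paper's, but it takes a different route. The paper does not go through the second moment at all: instead it adds back the (non-negative) diagonal terms to dominate $\hat\zeta_n(\pi)$ by the non-negative quantity $\tfrac{n}{n-1}\zeta(F_n)$, where $F_n$ is the empirical measure of the $({\bf Y}_i,{\bf Y}_i')$, and then applies Markov's inequality directly to $\zeta(F_n)$. The same antisymmetry you exploit (your Rademacher identity $g(({\bf Y}_i,{\bf Y}_i'),({\bf Y}_j,{\bf Y}_j'))=\epsilon_i\epsilon_j g_{ij}$) is what makes the off-diagonal part of $\E_\pi[\zeta(F_n)\mid\mathcal D']$ vanish, leaving only the diagonal contribution, which is bounded by $2/n$; this gives $c_{1-\alpha}\le 2/[\alpha(n-1)]$ in one step.

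What your approach buys is a cleaner structural picture (the resampled statistic is an order-two Rademacher chaos) and the tighter bound $c_{1-\alpha}\le \alpha^{-1}\sqrt{2/[n(n-1)]}$ before you weaken it. What the paper's approach buys is brevity: by passing to the non-negative $\zeta(F_n)$ it sidesteps the $\E[W_+]=\tfrac12\E|W|$ and Cauchy--Schwarz detour entirely, needing only a first-moment computation. Both hinge on the same antisymmetry of $g$ under coordinate swaps.
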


So, irrespective of the value of $d$, $c_{1-\alpha}$ is of order $O_P(n^{-1})$ and it converges to zero almost surely as $n$ diverges to infinity. 
Since, under $H_1$, $\hat\zeta_n$ converges to a positive number, the conditional p-value $p_n$ converges to zero as $n$ increases. Hence, the power of the resulting conditional test converges to one. This is formally stated in the following theorem.

\vspace{0.05in}
\begin{thm}
    For any fixed alternative, the power of the conditional test based on $p_n$ converges to one as $n$ diverges to infinity.
    \label{test-consistency}
\end{thm}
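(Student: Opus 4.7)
The plan is to combine the consistency of $\hat\zeta_n$ from Theorem \ref{consistency} with the deterministic upper bound on $c_{1-\alpha}$ from Lemma \ref{cutoff-bound}. Under $H_1$, Proposition \ref{validity} gives $\zeta_0 := \zeta(\Pr) > 0$, so the task reduces to showing $\P\{\hat\zeta_n > c_{1-\alpha}\} \to 1$ as $n \to \infty$, or equivalently $\P\{p_n < \alpha\} \to 1$.

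First, I would observe that the conditional test rejects exactly when $\hat\zeta_n > c_{1-\alpha}$. Invoking Lemma \ref{cutoff-bound}, the inequality $c_{1-\alpha} \leq 2/(\alpha(n-1))$ holds with probability one, so
$$\P\{p_n < \alpha\} \;=\; \P\{\hat\zeta_n > c_{1-\alpha}\} \;\geq\; \P\bigl\{\hat\zeta_n > 2/(\alpha(n-1))\bigr\}.$$
Since $2/(\alpha(n-1)) \to 0$ while $\zeta_0 > 0$, the deterministic threshold eventually falls below $\zeta_0/2$. For all sufficiently large $n$,
$$\P\bigl\{\hat\zeta_n > 2/(\alpha(n-1))\bigr\} \;\geq\; \P\{\hat\zeta_n > \zeta_0/2\} \;\geq\; 1 - \P\bigl\{|\hat\zeta_n - \zeta_0| \geq \zeta_0/2\bigr\},$$
and the right-most probability is bounded by $2\exp\{-n\zeta_0^2/128\}$ via the exponential deviation inequality in Theorem \ref{consistency}. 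Letting $n \to \infty$ then yields $\P\{p_n < \alpha\} \to 1$.

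I do not anticipate a genuine obstacle, as the argument is a direct sandwich: $c_{1-\alpha}$ is pushed to zero by Lemma \ref{cutoff-bound}, while $\hat\zeta_n$ concentrates at the positive limit $\zeta_0$ by Theorem \ref{consistency}. The slightly subtle point is that Lemma \ref{cutoff-bound} bounds the random cut-off $c_{1-\alpha}$ almost surely with respect to the data rather than merely in expectation over the resampling draw; this is what allows the two ingredients to be combined pathwise without any additional tightness argument. A convenient byproduct of the structure is that the conclusion holds for every fixed $d$ (and, more broadly, does not degrade with $d$), since both the bound on $c_{1-\alpha}$ and the exponential inequality for $\hat\zeta_n$ are free of the dimension.
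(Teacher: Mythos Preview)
Your proposal is correct and follows essentially the same route as the paper's proof: combine Proposition~\ref{validity} (so that $\zeta(\Pr)>0$ under $H_1$), Theorem~\ref{consistency} (so that $\hat\zeta_n$ concentrates at this positive value), and Lemma~\ref{cutoff-bound} (so that $c_{1-\alpha}\to 0$ almost surely) to conclude that $\P\{\hat\zeta_n>c_{1-\alpha}\}\to 1$. The paper's version is more terse and simply cites these three ingredients without writing out the explicit $\zeta_0/2$ sandwich or the exponential tail bound, but the logic is identical.
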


\vspace{0.05in}
Though the above resampling algorithm leads to a consistent level $\alpha$ test, it has a computational complexity of the order $O(n^22^n)$. So, it is not computationally feasible to implement even if the sample size is moderately large. Therefore, in practice, we generate $\pi_1,\pi_2,\ldots, \pi_B$ uniformly from $\{0,~1\}^n$ and compute the randomized p-value
\begin{align}
    p_{n,B} = \frac{1}{(B+1)}\big(\sum_{i=1}^B I\{\hat\zeta_n(\pi_i)\geq \hat\zeta_n\}+1\big).
\end{align}
We reject $H_0$ if $p_{n,B}< \alpha$. The following theorem shows that $p_{n,B}$ closely approximates $p_n$ for large $B$ and thereby justifies the use $p_{n,B}$ for the practical implementation of the test.

\vspace{0.05in}
{ \begin{thm}
    Given the augmented data $\mathcal{D}^\prime$, for any $\epsilon>0$ and $B\in \mathbb{N}$, we have 
    $$\P\left[|p_{n,B}-p_n|>\epsilon + \frac{1}{B+1}\right] \leq 2\exp\Big\{-2B \epsilon^2\Big\}.$$
    As a consequence, $|p_{n,B}-p_n|\stackrel{a.s.}{\rightarrow}0$ as $B$ diverges to infinity.
    \label{monte-carlo-consistency}
\end{thm}}

{  
\begin{rem}
  Note that Theorem \ref{monte-carlo-consistency} ensures that our Monte Carlo approximation $p_{n,B}$ is very close to the conditional resampling p-value $p_{n}$ even if $B$ is moderately large. Therefore, in practice, choosing a moderately large $B$ will ensure that our test based on the randomized p-value $p_{n,B}$ is close to that based on $p_n$.
  \end{rem}
}

\section{Asymptotic properties of the test}
In this section, we study some large sample properties of the proposed test. First, we investigate the performance of our test against contamination alternatives. Next, we establish its minimax rate optimality against a suitable class of nonparametric alternatives and prove its consistency even when the dimension of data increases with the sample size.
Finally, we show that our test is efficient in the Pittman sense under contiguous contamination alternatives. 

\subsection{{  Performance against contamination alternatives}}
\label{robusteness-section}
Consider a distribution $F$, which is not spherically symmetric. Let ${\bf X}_1,{\bf X}_2,\ldots, {\bf X}_n$ be i.i.d. random vectors from a contaminated distribution $F_\delta = (1-\delta)F+\delta G$ for some $\delta\in (0,1)$. 
The following lemma shows the effect of this contamination on $\zeta(\cdot)$ by providing a relation among $\zeta(F),\zeta(G)$ and $\zeta(F_\delta)$. 

\vspace{0.05in}
\begin{lemma}
    For any $\delta \in(0,1)$, we have 
    $$\zeta(F_\delta) = (1-\delta)^2 \zeta(F)+\delta^2 \zeta(G) + 2\delta(1-\delta) \zeta^\prime (G,F)$$
    where 
    $\zeta^\prime(G,F) = \E\{K({\bf X}_1,{\bf Y}_1)\}+\E\{K({\bf X}_1^\prime, {\bf Y}_1^\prime)\}-\E\{K({\bf X}_1,{\bf Y}_1^\prime)\}-\E\{K({\bf X}_1^\prime,{\bf Y}_1)\}.$ 
    Here ${\bf X}_1 \sim G$ and ${\bf Y}_1\sim F$ are independent, ${\bf X}_1^\prime = \|{\bf X}_1\| {\bf U}_1$, ${\bf Y}_1^\prime = \|{\bf Y}_1\| {\bf U}_2$ for ${\bf U}_1,{\bf U}_2$ being i.i.d. Unif$(\mathcal{S}^{d-1})$ and $K({\bf x},{\bf y}) = \exp\{-\frac{1}{2d}\|{\bf x}-{\bf y}\|^2\}$.
    \label{relation-contamination}
\end{lemma}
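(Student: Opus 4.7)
My plan is to start from the closed-form expression for $\zeta$ given by Theorem \ref{closed-form-expression}, apply it with ${\bf X}_1, {\bf X}_2$ i.i.d.\ from $F_\delta$, and then decompose each of the three expectations according to the latent mixture indicator. Concretely, I can represent each draw from $F_\delta$ as $B_i {\bf Z}_i^G + (1-B_i) {\bf Z}_i^F$, where $B_1, B_2$ are i.i.d.\ Bernoulli$(\delta)$, independent of ${\bf Z}_i^G \sim G$ and ${\bf Z}_i^F \sim F$, with all components mutually independent. The corresponding symmetric variants ${\bf X}_i' = \|{\bf X}_i\| {\bf U}_i$ then also split along the same latent indicator, because conditional on $B_i$ the norm $\|{\bf X}_i\|$ is a norm drawn from either $G$ or $F$.

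Next, I would condition on $(B_1, B_2)$ and expand each of the three expectations $\E\{K({\bf X}_1, {\bf X}_2)\}$, $\E\{K({\bf X}_1', {\bf X}_2')\}$, and $\E\{K({\bf X}_1, {\bf X}_2')\}$ into four cases weighted by $(1-\delta)^2$, $\delta(1-\delta)$, $\delta(1-\delta)$, and $\delta^2$. The first and second expectations are symmetric in the indices, so the two cross cases collapse into a single term with coefficient $2\delta(1-\delta)$. The third expectation is not symmetric in the pair $({\bf X}_1, {\bf X}_2)$ because the second argument appears in its symmetrized form ${\bf X}_2'$, so the two cross cases $(B_1,B_2)=(0,1)$ and $(1,0)$ produce two genuinely different terms: $\E\{K({\bf Y}_1, {\bf X}_1')\}$ and $\E\{K({\bf X}_1, {\bf Y}_1')\}$ in the notation of the lemma. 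Keeping this asymmetry straight is the main bit of bookkeeping in the proof.

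Finally, I would collect terms by powers of $\delta$. The $(1-\delta)^2$ coefficient aggregates into exactly $\E_F\{K({\bf Y}_1,{\bf Y}_2)\} + \E_F\{K({\bf Y}_1',{\bf Y}_2')\} - 2\E_F\{K({\bf Y}_1,{\bf Y}_2')\} = \zeta(F)$ by Theorem \ref{closed-form-expression}; the $\delta^2$ coefficient analogously assembles into $\zeta(G)$; and the $2\delta(1-\delta)$ coefficient equals
$$\E\{K({\bf X}_1,{\bf Y}_1)\} + \E\{K({\bf X}_1',{\bf Y}_1')\} - \E\{K({\bf X}_1,{\bf Y}_1')\} - \E\{K({\bf X}_1',{\bf Y}_1)\},$$
which is precisely $\zeta'(G,F)$. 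There is no real obstacle beyond careful accounting; the whole argument is just conditioning on the mixture indicator combined with the closed-form representation. The only subtlety to flag is that the augmentation step treats the norms coherently: when $B_i=1$, the vector ${\bf X}_i$ is distributed as $G$ \emph{and} its norm feeds into ${\bf X}_i' = \|{\bf X}_i\|{\bf U}_i$, so that conditionally on $B_i=1$ the pair $({\bf X}_i, {\bf X}_i')$ has the same joint law as if drawn from $G$ with an independent uniform ${\bf U}_i$, and similarly for $B_i=0$. This is what makes the conditional expectations reduce to pure $F$-expectations and pure $G$-expectations without any mixture cross-contamination.
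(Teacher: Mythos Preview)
Your proposal is correct and follows essentially the same approach as the paper: both start from the closed-form expression of Theorem~\ref{closed-form-expression} and decompose each of the three expectations according to the mixture $F_\delta=(1-\delta)F+\delta G$, then collect terms by powers of $\delta$. The paper phrases the decomposition via direct expansion of the integrals against $\mathrm d\Pr=(1-\delta)\,\mathrm dF+\delta\,\mathrm dG$, whereas you phrase it probabilistically by conditioning on latent Bernoulli indicators; these are equivalent presentations of the same computation, and your emphasis on the asymmetry of the third term $\E\{K({\bf X}_1,{\bf X}_2')\}$ matches exactly what the paper tracks.
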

Note that if $G$ is spherically symmetric, $\zeta(G)=0$ and $\zeta^\prime(G,F)=0$. Then, for any fixed $\delta$, we have $\zeta(F_\delta) = (1-\delta)^2\zeta(F)$. 
 The following theorem also shows that for any  $\delta \in (0,1)$ and $G$ spherically symmetric, the power of our test against the contaminated alternative $F_{\delta}$ converges to one as the sample size increases. 

\vspace{0.05in}
\begin{thm}
    Let ${\bf X}_1,{\bf X}_2,\ldots, {\bf X}_n$ be independent copies of ${\bf X}\sim F_\delta=(1-\delta)F+\delta G$, where $0<\delta<1$, and $\zeta(F)>0$. Then, the minimum power of the proposed test over the class of the spherical distributions $G$, i.e.,  $\inf\limits_{G: \zeta(G)=0} \P\{\hat\zeta_n>c_{1-\alpha}\}$, converges to one as the sample size diverges to infinity.
    \label{robustness}
\end{thm}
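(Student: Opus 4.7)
The plan is to reduce the problem to two ingredients that have already been established: the exact formula from Lemma \ref{relation-contamination} for how $\zeta$ behaves under contamination by a spherical distribution, and the distribution-free bounds on $\hat\zeta_n$ (Theorem \ref{consistency}) and on $c_{1-\alpha}$ (Lemma \ref{cutoff-bound}). Because both bounds hold uniformly over the underlying distribution, the infimum over $G \in \mathcal G$ will come essentially for free once we identify a lower bound on $\zeta(F_\delta)$ that does not depend on $G$.

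First I would fix an arbitrary spherically symmetric $G$ and evaluate $\zeta(F_\delta)$ via Lemma \ref{relation-contamination}. For such $G$ we have $\zeta(G)=0$, and moreover $\zeta^\prime(G,F)=0$: since $G$ is spherical, Lemma \ref{basis-alt-form}'s characterization gives $\mathbf X_1 \stackrel{D}{=} \|\mathbf X_1\|\mathbf U = \mathbf X_1^\prime$, so the two positive terms defining $\zeta^\prime(G,F)$ match the two negative terms pairwise. Thus $\zeta(F_\delta)=(1-\delta)^2\zeta(F)$, a quantity which is strictly positive (because $\delta<1$ and $\zeta(F)>0$) and, crucially, independent of the choice of $G$. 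Denote this common value by $\eta := (1-\delta)^2\zeta(F)>0$.

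Next I would control the rejection region. By Lemma \ref{cutoff-bound}, $c_{1-\alpha}\le 2/(\alpha(n-1))$ with probability one, so for all $n$ sufficiently large (namely $n\ge 1 + 4/(\alpha\eta)$) we have $c_{1-\alpha}\le \eta/2$ almost surely; call this threshold $n_0$, which depends only on $\alpha,\delta,\zeta(F)$ and not on $G$. For $n\ge n_0$,
\begin{align*}
\P\{\hat\zeta_n>c_{1-\alpha}\}\;\ge\;\P\{\hat\zeta_n>\eta/2\}\;=\;\P\{\hat\zeta_n-\zeta(F_\delta)>-\eta/2\}\;\ge\;1-\P\{|\hat\zeta_n-\zeta(F_\delta)|>\eta/2\}.
\end{align*}
Applying Theorem \ref{consistency} to the sample $\mathbf X_1,\ldots,\mathbf X_n \stackrel{\mathrm{iid}}{\sim} F_\delta$ with $\epsilon=\eta/2$ yields the exponential bound $\P\{|\hat\zeta_n-\zeta(F_\delta)|>\eta/2\}\le 2\exp(-n\eta^2/128)$, which holds irrespective of $d$ and of $G$ because it rests only on the bounded-difference inequality applied to the bounded kernel $g$.

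Putting these together, $\P\{\hat\zeta_n>c_{1-\alpha}\} \ge 1-2\exp(-n\eta^2/128)$ for every spherically symmetric $G$ once $n\ge n_0$. Taking the infimum over $G\in\mathcal G$ preserves the inequality, and the right-hand side tends to $1$ as $n\to\infty$, which gives the claim. The only subtle point is ensuring that both the lower bound on $\zeta(F_\delta)$ and the deviation bound are $G$-free; this is exactly what Lemma \ref{relation-contamination} (via the vanishing of $\zeta^\prime(G,F)$ for spherical $G$) and the universal constant $32$ in Theorem \ref{consistency} provide. I do not foresee any genuine obstacle beyond cleanly assembling these pieces; the spherical invariance argument for $\zeta^\prime(G,F)=0$ is the single verification that is worth writing out in detail.
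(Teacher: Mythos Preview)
Your argument is correct. The overall architecture matches the paper's proof: both use Lemma \ref{relation-contamination} to reduce $\zeta(F_\delta)$ to the $G$-free constant $(1-\delta)^2\zeta(F)$, and both use Lemma \ref{cutoff-bound} to replace the random threshold $c_{1-\alpha}$ by a deterministic $O(n^{-1})$ quantity.

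The one genuine difference is the concentration step. The paper bounds $\P\{\hat\zeta_n\le 2((n-1)\alpha)^{-1}\}$ via Chebyshev's inequality together with a separate variance lemma (Lemma \ref{var-inequality}, proved in the appendix), which yields a polynomial rate $O(n^{-1})$ for the type~II error. You instead invoke Theorem \ref{consistency} directly, obtaining the exponential bound $2\exp(-n\eta^2/128)$. Your route is shorter (it avoids the variance computation entirely) and gives a sharper rate; the paper's route has the minor advantage that Lemma \ref{var-inequality} is reused later in the proofs of Theorems \ref{minimax-upper-bound} and \ref{high-dim-consistency}, so it is not wasted effort in the overall development. One small slip: the characterization you cite for ${\bf X}_1\stackrel{D}{=}{\bf X}_1^\prime$ is Lemma 2.1, not Lemma \ref{basis-alt-form}.
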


\vspace{0.05in}
This theorem shows that our test is { able to detect the spherical asymmetry of the underlying distribution} for any fixed proportion of contamination { by a non-spherical \linebreak distribution}. Since $\zeta(F_{1-\delta})=\delta^2 \zeta(F)$, it shows the convergence of the power of our test for $F_{1-\delta}$ as well. So, if a sample from a spherically symmetric distribution has a small proportion of contamination by observations from a non-spherical distribution, our test can successfully detect the presence of those contaminations when the sample size is large. The result in Theorem \ref{robustness} holds even for a sequence $\{\delta_n\}$ that remains bounded away from one. However, if that is not the case, the asymptotic power of the test will depend on the convergence rate of $1-\delta_n$ and may yield non-trivial limits for certain choices of $\{\delta_n\}$. This is explored in the following subsection.

\subsection{Minimax rate optimality and high-dimensional behaviour}

Let us consider a testing problem involving a pair of hypotheses $H_0:\zeta(\Pr)=0$ and $H_1^\prime: \zeta(\Pr)>\epsilon(n)$, where $\epsilon(n)$ is a positive number that depends on the sample size $n$. Let $\mathcal{F}(\epsilon(n)):=\{\Pr\mid \zeta(\Pr)>\epsilon(n)\}$ be the class of alternatives under $H_1^\prime$ and $\mathbb{T}_{n}(\alpha)$ be the class of all level $\alpha$ test. The minimax type II error rate for this problem is defined as
$$R_{n}\big(\epsilon(n)\big) = \inf_{\phi\in\mathbb{T}_{n}(\alpha)}\sup_{F\in\mathcal{F}(\epsilon(n))} \P_{F^n}\{\phi = 0\},$$
where $\P_{F^n}$ denotes the probability corresponding to the joint distribution of $({\bf X}_1,{\bf X}_2,\ldots, {\bf X}_n)$, and the ${\bf X}_i$s are independent copies of ${\Xvec}\sim F$. Here we want to find an optimum choice of $\epsilon(n)$ (call it $\epsilon_{0}(n)$) such that the following two conditions hold.
\begin{enumerate}
    \item[(a)] \label{cond_a} 
    For any $0<\beta<1-\alpha$, there exists a constant $c(\alpha,\beta)>0$ such that for all $0 < c < c(\alpha,\beta)$, we have 
    $\liminf\limits_{n \rightarrow \infty} R_{n}(c~\epsilon_0({n})) \geq \beta.$
    
    \item[(b)] \label{cond_b} There exists a level $\alpha$ test $\phi_0$ such that for any $0<\beta<1-\alpha$, there exists $C(\alpha,\beta) > 0$ for which $\limsup\limits_{n \rightarrow \infty} \sup\limits_{F\in\mathcal{F}(c~\epsilon_0({n}))}\P_{F^n}\{\phi_0=0\}\leq \beta$ for all $c>C(\alpha,\beta)$, or in other words, $\limsup\limits_{n \rightarrow \infty} R_{n}(c~\epsilon_0({n})) \leq \beta$ for all $c > C(\alpha,\beta)$. 
  
\end{enumerate}

This optimal rate $\epsilon_0({n})$ is called the minimax rate of separation for the above problem, and the test $\phi_0$ is called the minimax rate optimal test. Theorem \ref{minimax-lower-bound} below shows that here $\epsilon_0(n)$ cannot be of order smaller than $O(n^{-1})$. So, for any $0<\beta<1-\alpha$ and any $\phi\in \mathbb{T}_n(\alpha)$, we can always find a distribution $F$ with $\zeta(F)$ of the order $O(n^{-1})$ or smaller such that the type II error rate of the test $\phi$, i.e., $\P_{F^n}\{\phi=0\}$ is { larger} than $\beta$.

\vspace{0.05in}
\begin{thm}
For $0<\beta<1-\alpha$, there exists a constant $c_0(\alpha,\beta)$ such that the minimax type II error rate $R_{n}(cn^{-1})$ is lower bounded by $\beta$ for all $n$ and all $c\in(0,c_0(\alpha, \beta))$.
\label{minimax-lower-bound}
\end{thm}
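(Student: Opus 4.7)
The plan is to apply Le Cam's two-point method, coupling the null to a sequence of contaminated alternatives whose $\zeta$-values lie just above $c/n$. In view of Lemma \ref{relation-contamination}, a natural parametric family of alternatives is $F_\delta = (1-\delta)F + \delta G$, where $F$ is a fixed non-spherical distribution and $G$ is a fixed spherically symmetric distribution; since $\zeta(G)=0$ and $\zeta^\prime(G,F)=0$, the identity $\zeta(F_\delta) = (1-\delta)^2 \zeta(F)$ lets me tune the separation to the $1/n$ scale by setting $1-\delta_n$ on the $1/\sqrt n$ scale.

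Concretely, I would take $G = N({\bf 0}, {\bf I}_d)$ and $F = N(\muvec, {\bf I}_d)$ for some $\muvec \neq {\bf 0}$, so that $\zeta(F) > 0$ by Proposition \ref{validity} and $\chi^2(F,G) = e^{\|\muvec\|^2}-1 < \infty$. For each $n$, define $\delta_n \in [0,1]$ by $(1-\delta_n)^2 = 2c/(n\zeta(F))$; this is admissible for every $n \geq 1$ as long as $c_0(\alpha,\beta) \leq \zeta(F)/2$. Then $\zeta(F_{\delta_n}) = 2c/n > c/n$, so $F_{\delta_n} \in \mathcal{F}(c/n)$. A direct computation, using that the $\chi^2$-divergence from $G$ is quadratic in the mixing weight, gives $\chi^2(F_{\delta_n}, G) = (1-\delta_n)^2 \chi^2(F, G)$, and by tensorization of $\chi^2$ across independent coordinates,
\[
1 + \chi^2\!\big(F_{\delta_n}^{\otimes n}, G^{\otimes n}\big) \;=\; \big(1 + \chi^2(F_{\delta_n}, G)\big)^n \;\leq\; \exp\!\big(2c\,\chi^2(F,G)/\zeta(F)\big),
\]
which is bounded independently of $n$ and tends to $1$ as $c \downarrow 0$.

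To finish, for any level $\alpha$ test $\phi$, the standard Le Cam inequality combined with the bound $\mathrm{TV} \leq \sqrt{\chi^2/2}$ gives
\[
\P_{F_{\delta_n}^{\otimes n}}(\phi = 0) \;\geq\; 1 - \alpha - \mathrm{TV}\!\big(G^{\otimes n}, F_{\delta_n}^{\otimes n}\big) \;\geq\; 1 - \alpha - \sqrt{\tfrac12\big(\exp(2c\,\chi^2(F,G)/\zeta(F)) - 1\big)}.
\]
Choosing $c_0(\alpha,\beta)$ small enough that this lower bound exceeds $\beta$ for every $0<c<c_0$, and using that $F_{\delta_n} \in \mathcal{F}(c/n)$, yields $R_n(c/n) \geq \beta$ uniformly in $n \geq 1$. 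The main obstacle, and the conceptual heart of the argument, is making the two scalings line up: one needs $\chi^2(F_{\delta_n}, G)$ to be of the same order as $\zeta(F_{\delta_n})$, so that shrinking the separation also shrinks the available statistical information. This alignment is automatic in the contamination family, because both quantities are exactly quadratic in $1-\delta_n$, so the signal-to-noise ratio $\chi^2(F_{\delta_n}, G)/\zeta(F_{\delta_n}) = \chi^2(F,G)/\zeta(F)$ is a fixed constant, which is precisely what makes the $1/n$ separation rate unimprovable.
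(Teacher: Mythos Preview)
Your argument is correct and follows essentially the same route as the paper: a Le Cam two-point lower bound with a contamination alternative $F_{\delta_n}=(1-\delta_n)F+\delta_n G$, exploiting $\zeta(F_{\delta_n})=(1-\delta_n)^2\zeta(F)$ from Lemma~\ref{relation-contamination} to place the alternative at separation $\Theta(1/n)$ while keeping the product-measure divergence from $G^{\otimes n}$ bounded. The only cosmetic difference is that the paper bounds $d_{TV}$ via Pinsker and then bounds $KL$ by the $\chi^2$-type quantity $\int(f/g-1)^2 g$, whereas you go straight through $\chi^2$ and its exact tensorization; since the paper's $\gamma_1$ is precisely your $\chi^2(F,G)$, the two computations are the same in substance.
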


\vspace{0.05in}
\begin{rem}
    In particular, consider the distribution $F_{\delta_n} = (1-\delta_n)F+\delta_n G$ where $\zeta(G)=0$ and $\zeta(F)>0$ (note that $\zeta(F_{\delta_n})=(1-\delta_n)^2 \zeta(F)$). 
    If $\delta_n$ is such that $n(1-\delta_n)^2\rightarrow 0$ as $n \rightarrow \infty$ (i.e. $\zeta(F_{\delta_n})$ is of smaller asymptotic order than $O(n^{-1})$), 
    then the power of any level $\alpha$ test will fall below the nominal level $\alpha$. 
    \label{remark4}
\end{rem}

\vspace{0.05in}
In the next theorem, we establish that in the case of $\epsilon_0(n)=n^{-1}$, our test based on ${\hat \zeta}_n$ satisfies the condition (b) stated above. Therefore, these two theorems (Theorem \ref{minimax-lower-bound} and \ref{minimax-upper-bound}) together show that the minimax rate of separation is $\epsilon_0({n})=n^{-1}$, and our proposed test has the minimax rate optimality for the class of alternatives $\mathcal{F}(\epsilon(n))$. 

\vspace{0.05in}
\begin{thm}
For any $\beta \in (0,1-\alpha)$, there exists a constant $C_0(\alpha,\beta)$ (independent of $d$) such that asymptotically the maximum type II error of the test based on $\hat\zeta_n$ over $\mathcal{F}(cn^{-1})$  is uniformly bounded above by $\beta$ for all $c>C_0(\alpha,\beta)$, i.e.,
$\displaystyle \limsup\limits_{n \rightarrow \infty}\sup_{F\in \mathcal{F}(c\lambda({n}))}\P_{F^n}(\hat\zeta_n\leq c_{1-\alpha})\leq \beta$ for all $c>C_0(\alpha,\beta).$
\label{minimax-upper-bound}
\end{thm}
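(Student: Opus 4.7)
The plan is to combine the deterministic upper bound on the resampling cutoff from Lemma \ref{cutoff-bound} with a sharp variance bound on $\hat\zeta_n$ whose leading term scales linearly with $\zeta(F)$. Since Lemma \ref{cutoff-bound} gives $c_{1-\alpha} \le 2/(\alpha(n-1))$ almost surely, we immediately obtain
\begin{equation*}
    \P_{F^n}\big(\hat\zeta_n \le c_{1-\alpha}\big) \;\le\; \P_{F^n}\Big(\hat\zeta_n \le \tfrac{2}{\alpha(n-1)}\Big).
\end{equation*}
For any $F \in \mathcal{F}(c/n)$ with $c > 8/\alpha$, the threshold $2/(\alpha(n-1))$ lies strictly below $\zeta(F)/2$ (for $n \ge 2$), so the right-hand side is bounded by $\P_{F^n}(|\hat\zeta_n - \zeta(F)| \ge \zeta(F)/2)$, which I would control using Chebyshev's inequality.

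The crux is therefore to establish $\var(\hat\zeta_n) \le 16\zeta(F)/n + O(1/n^2)$ with $d$-free constants. Viewing $\hat\zeta_n$ as a $U$-statistic of order two in the i.i.d.\ augmented pairs $W_i = ({\bf X}_i, {\bf X}_i^\prime)$, the Hoeffding decomposition gives $\var(\hat\zeta_n) \le 4\xi_1/n + 2\xi_2/(n(n-1))$ where $\xi_c = \var\big(\E[g(W_1, W_2)\mid W_1,\ldots,W_c]\big)$. A direct calculation from the definition of $g$ shows that the first projection factorises as
\begin{equation*}
    g_1({\bf x},{\bf x}^\prime) \;=\; \phi({\bf x}) - \phi({\bf x}^\prime), \qquad \phi({\bf y}) \;:=\; \E[K({\bf y},{\bf X})] - \E[K({\bf y},{\bf X}^\prime)],
\end{equation*}
where $\phi$ is precisely the MMD witness function associated with the pair of distributions whose squared MMD equals $\zeta(F)$. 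Because $K$ is positive-definite with $K({\bf y},{\bf y})=1$, the reproducing property yields $|\phi({\bf y})| \le \sqrt{K({\bf y},{\bf y})}\cdot\sqrt{\zeta(F)} = \sqrt{\zeta(F)}$, giving $\xi_1 \le 4\zeta(F)$; the bound $\xi_2 \le 16$ is immediate from $|g|\le 4$.

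Chebyshev then yields $\P_{F^n}(|\hat\zeta_n - \zeta(F)| \ge \zeta(F)/2) \le 4\var(\hat\zeta_n)/\zeta(F)^2 \le 64/c + O(1/c^2)$, so taking $C_0(\alpha,\beta)$ large enough that this quantity drops below $\beta$ and $c > 8/\alpha$ hold whenever $c > C_0(\alpha,\beta)$ completes the proof, with all constants independent of $d$. The main obstacle is the variance bound $\xi_1 \le 4\zeta(F)$: a naive application of bounded differences (as in Theorem \ref{consistency}) with $\epsilon = \zeta(F)/2 \asymp c/n$ yields only $2\exp(-c^2/(128n))$, which tends to $2$ as $n \to \infty$ and is useless in the $1/n$-regime. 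The RKHS witness-function inequality, which captures the near-degeneracy of the kernel $g$ as $F$ approaches spherical symmetry, is precisely what elevates the test from consistency to minimax rate optimality, and dimension-freeness survives because $K({\bf y},{\bf y})=1$ and $|K|\le 1$ hold for every $d$.
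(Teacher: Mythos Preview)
Your proposal is correct and follows essentially the same route as the paper: cutoff bound (Lemma~\ref{cutoff-bound}) plus Chebyshev, with the key input being a variance bound of the form $\var(\hat\zeta_n)\lesssim \zeta(F)/n + 1/n^2$ obtained by controlling the first Hoeffding projection via Cauchy--Schwarz. The only difference is cosmetic: the paper writes $g_1$ through the characteristic-function integral representation of the Gaussian kernel and applies Cauchy--Schwarz in $L^2(\phi_0\,\mathrm dt)$ to obtain $g_1^2\le 2\zeta(\Pr)$, whereas you invoke the equivalent RKHS witness-function bound $|\phi(y)|\le\sqrt{K(y,y)}\sqrt{\zeta(F)}$; by Bochner's theorem these are the same Cauchy--Schwarz inequality in dual language, and your slightly looser constants ($\xi_1\le 4\zeta(F)$, $|g|\le 4$) are harmless for the conclusion.
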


\vspace{0.05in}
{ \begin{rem}
    Consider the same example as in Remark \ref{remark4}. Since $\zeta(F_{\delta_n}) = (1-\delta_n)^2 \zeta(F)$, from the proof of Theorem \ref{minimax-upper-bound}, it can be shown that if $n(1-\delta_n)^2 \rightarrow \infty$ as $n \rightarrow \infty$
    (i.e. $\zeta(F_{\delta_n})$ is of higher asymptotic order than $O(n^{-1})$), the power of our test converges to one. 
\end{rem}}


\vspace{0.05in}
Note that the constant $C(\alpha,\beta)$ in Theorem \ref{minimax-upper-bound} does not depend on the dimension $d$. However, $\zeta(F)$ may vary with the dimension. We know that under certain regularity conditions \citep[see, e.g.,][]{hall2005geometric, ahn2007high, jung2009pca}, as the dimension increases, pairwise distances among the observations  (after appropriate scaling) converge to a constant, and all observations tend to lie on the surface of a sphere of increasing radius. So, in such situations, $\zeta(F)$ converges to $0$ as $d$ increases. Therefore, one may be curious to know how this test will perform if the dimension and the sample size increase simultaneously.
{The following theorem answers this question.}

\vspace{0.05in}
\begin{thm}
    Suppose that ${\bf X}_1,{\bf X}_2,\ldots,{\bf X}_n$ are independent copies of ${\bf X}\sim {F}^{(d)}$, a $d$-dimensional distribution. If $d=d(n)$ grows with the sample size $n$  in such a way that $n\zeta(F^{(d)})$ diverges to infinity as $n$ increases, then the power of the proposed test converges to one as $n$ and $d$ both diverge to infinity. 
    \label{high-dim-consistency}
\end{thm}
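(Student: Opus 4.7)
The plan is to deduce this high-dimensional consistency statement as an almost immediate consequence of Theorem~\ref{minimax-upper-bound}, which already supplies a \emph{dimension-free} upper bound on the Type II error of our test over the class $\mathcal{F}(c/n)=\{F:\zeta(F)>c/n\}$. Under the hypothesis $n\zeta(\Pr^{(d)})\to\infty$, the sequence $\Pr^{(d(n))}$ eventually lies in $\mathcal{F}(c/n)$ for \emph{any} fixed separation constant $c$, at which point the minimax upper bound can be invoked verbatim.

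Concretely, I would first fix an arbitrary $\beta\in(0,1-\alpha)$ and invoke Theorem~\ref{minimax-upper-bound} to obtain a constant $C_0(\alpha,\beta)$, independent of $d$, such that for every $c>C_0(\alpha,\beta)$,
$$\limsup_{n\to\infty}\sup_{F\in\mathcal{F}(c/n)}\P_{F^n}\big(\hat\zeta_n\le c_{1-\alpha}\big)\le\beta.$$
Next I would pick any such $c>C_0(\alpha,\beta)$ and use the assumption $n\zeta(\Pr^{(d(n))})\to\infty$ to obtain an integer $N_c$ with $n\zeta(\Pr^{(d(n))})>c$ whenever $n>N_c$; for these $n$, $\Pr^{(d(n))}\in\mathcal{F}(c/n)$, so the probability $\P\{\hat\zeta_n\le c_{1-\alpha}\}$ computed under $\Pr^{(d(n))}$ is dominated by the supremum displayed above. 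Taking $\limsup$ on both sides yields $\limsup_n\P\{\hat\zeta_n\le c_{1-\alpha}\}\le\beta$, and since $\beta\in(0,1-\alpha)$ was arbitrary the limsup equals $0$; equivalently, the power $\P\{\hat\zeta_n>c_{1-\alpha}\}$ converges to one.

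The genuine work is therefore not in the reduction above but inside Theorem~\ref{minimax-upper-bound}, where the decisive point is that $C_0(\alpha,\beta)$ is arranged to depend only on $\alpha$ and $\beta$ and \emph{not} on $d$. A tempting shortcut would be to combine the Hoeffding-type deviation bound of Theorem~\ref{consistency} with the cut-off control of Lemma~\ref{cutoff-bound} directly, but setting the deviation at half the signal $\zeta(\Pr^{(d(n))})$ would produce a tail factor of the form $\exp\{-n\zeta(\Pr^{(d(n))})^2/C\}$, which forces the strictly stronger requirement $n\zeta(\Pr^{(d(n))})^2\to\infty$ in the regime $\zeta(\Pr^{(d(n))})\to 0$. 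The refined variance analysis underlying Theorem~\ref{minimax-upper-bound}, which exploits the near-degeneracy of the kernel $g$ for distributions close to $H_0$, is thus genuinely needed to reach the optimal condition $n\zeta(\Pr^{(d)})\to\infty$ asserted here.
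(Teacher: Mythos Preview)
Your proof is correct and follows essentially the same route as the paper: both derive the result directly from the dimension-free Type~II error control established in Theorem~\ref{minimax-upper-bound}. The paper's own argument is even terser---it points back to the non-asymptotic Chebyshev inequality \eqref{eq:chebyshev-bound} inside that proof and observes it vanishes when $n\zeta(\Pr)\to\infty$---whereas you invoke the theorem statement as a black box and send $\beta\downarrow 0$, but the substance is identical.
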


\vspace{0.05in}
So, even if $\zeta(F^{(d)})$ converges to $0$ as $d$ increases with $n$, the power of our test converges to one as long as $\zeta(F^{(d)})$ converges at a slower rate than $O(n^{-1})$. However, if the distance convergence does not hold and we have $\liminf_{d\to\infty}\zeta(F^{(d)})>0$, the power of our test converges to one even if the sample size increases at a very slow rate. An example of such a distribution is given in Section \ref{sec:simulation} (see Example 4(a)). For such examples, one can expect the test to have good performance even in the High Dimension Low Sample Size (HDLSS) setup, where $n$ is fixed (but suitably large) and $d$ diverges to infinity. However, in the case of distance concentration in the HDLSS set-up, where we have $\liminf_{d\to\infty}\zeta(F^{(d)})=0$, we need to increase the sample size suitably to get good performance. 
This is further explored in our simulation studies in Section \ref{sec:simulation}. 

\subsection{Pitmann Efficiency}

Now, consider the alternative $F_{1-\beta_n n^{-1/2}} = (1-\beta_n n^{-1/2}) G + (\beta_n n^{-1/2}) F$, but assume that $\beta_n$ is a sequence of positive numbers converging to some $\beta\in(0,\infty)$. Let $f$ and $g$ be the densities corresponding to $F$ and $G$, respectively. To study the asymptotic behaviour of our test for such an alternative, we first study the asymptotic behaviour of $n\hat \zeta_n$ and its resample analog $n\hat\zeta_n(\pi)$. The following result shows that under suitable assumption on $F$ and $G$, the sequence of alternative asymmetric distributions $F_{1-\beta_n n^{-1/2}}$ is contiguous and locally asymptotically normal.

\vspace{0.05in}
\begin{prop}
    Let ${\bf X}_1,\ldots, {\bf X}_n$ be independent copies of ${\bf X}\sim G$. Then, under the assumption that $\int \big(f({\bf u})/g({\bf u})-1\big)^2 g({\bf u}) \mathrm d{\bf u}<\infty$  as $n$ grows to infinity, we have
    $$\left|\log\bigg\{\prod_{i=1}^n\Big(1+\frac{\beta_n}{\sqrt{n}}\Big\{\frac{f({\bf X}_i)}{g({\bf X}_i)}-1\Big\}\Big)\bigg\}-\frac{\beta_n}{\sqrt{n}}\sum_{i=1}^n \bigg\{\frac{f({\bf X}_i)}{g({\bf X}_i)}-1\bigg\}+\frac{\beta_n^2}{2}\E\bigg\{\frac{f({\bf X}_1)}{g({\bf X}_1)}-1\bigg\}^2\right|\stackrel{P}{\rightarrow} 0.$$
    \label{lanfd}
\end{prop}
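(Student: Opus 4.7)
The plan is a term-by-term second-order Taylor expansion of $\log(1+\cdot)$, followed by a Lindeberg-type truncation to control the cubic remainder using only the stated $L^2$ hypothesis. Set $W_i = f({\bf X}_i)/g({\bf X}_i) - 1$ and $Y_{n,i} = (\beta_n/\sqrt{n})\,W_i$, and let $R(x) = \log(1+x) - x + x^2/2$ for $x>-1$. Expanding each factor of the product and summing, the quantity inside the absolute-value sign equals
$$
\tfrac{1}{2}\Big(\beta_n^2\,\E[W_1^2] - \textstyle\sum_{i=1}^n Y_{n,i}^2\Big) \;+\; \textstyle\sum_{i=1}^n R(Y_{n,i}),
$$
so it suffices to show each of these two pieces vanishes in probability.

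For the first piece, write $\sum_i Y_{n,i}^2 = (\beta_n^2/n)\sum_i W_i^2$; the hypothesis $\E[W_1^2]=\int(f/g-1)^2 g\,\mathrm d{\bf u}<\infty$ and the weak law of large numbers give $n^{-1}\sum_i W_i^2 \stackrel{P}{\rightarrow} \E[W_1^2]$, and since $\beta_n\to\beta$ is bounded the difference tends to zero in probability. For the remainder, fix a small $\tau\in(0,1/2]$ and split according to whether $|Y_{n,i}|\le\tau$. On the good set, the Taylor bound $|R(x)|\le C|x|^3$ (valid for $|x|\le 1/2$) yields
$$
\Big|\textstyle\sum_i R(Y_{n,i})\,I(|Y_{n,i}|\le\tau)\Big| \;\le\; C\tau \textstyle\sum_i Y_{n,i}^2 \;=\; O_P(\tau),
$$
which can be made arbitrarily small. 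On the complementary set, the crucial structural fact is $W_i\ge -1$ (since $f,g$ are densities), hence $Y_{n,i}\ge -\beta_n/\sqrt{n}$; for $n$ large, the event $|Y_{n,i}|>\tau$ thus forces $Y_{n,i}>\tau>0$. Combining this with the elementary inequality $0\le R(x)\le x^2/2$ for $x\ge 0$ (itself a consequence of $\log(1+x)\le x$), one gets
$$
\Big|\textstyle\sum_i R(Y_{n,i})\,I(|Y_{n,i}|>\tau)\Big| \;\le\; \tfrac{1}{2}\textstyle\sum_i Y_{n,i}^2\,I(|Y_{n,i}|>\tau).
$$
Taking expectations, the right-hand side has mean $\beta_n^2\,\E[W_1^2\,I(|W_1|>\tau\sqrt{n}/\beta_n)]$, which tends to zero by dominated convergence since $\E[W_1^2]<\infty$. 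Hence the truncated sum is $o_P(1)$; letting $\tau\downarrow 0$ along a sequence shows that the whole remainder sum vanishes in probability, which completes the argument.

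The main obstacle is that a blanket cubic bound on $R$ would require $\E|W_1|^3<\infty$, which is not part of the hypothesis, and $\max_i|Y_{n,i}|$ is not $o_P(1)$ under the $L^2$ assumption alone, so uniform Taylor control is unavailable. The two-scale truncation at level $\tau$ sidesteps both issues by exploiting the density structure: the negative tail of $Y_{n,i}$ is automatically of order $n^{-1/2}$, so only the positive tail has to be handled, and there the Lindeberg-type tail condition follows cleanly from $\E[W_1^2]<\infty$ via dominated convergence.
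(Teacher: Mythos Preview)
Your proof is correct but takes a different route from the paper's. The paper writes $\log(1+y)=y-\tfrac{y^2}{2}+\tfrac{y^2}{2}h(y)$ with $h$ continuous and $h(0)=0$, then bounds the remainder by $\max_i|h(Y_{n,i})|\cdot\sum_i Y_{n,i}^2$; the second factor is $O_P(1)$ by the weak law, and the first is $o_P(1)$ because in fact $\max_i|Y_{n,i}|\stackrel{P}{\to}0$ under the $L^2$ hypothesis alone, via
\[
\P\Big(\max_{i\le n}|W_i|>\epsilon\sqrt{n}\Big)\;\le\; n\,\P(|W_1|>\epsilon\sqrt{n})\;\le\; \epsilon^{-2}\,\E\big[W_1^2\,I(|W_1|>\epsilon\sqrt{n})\big]\;\longrightarrow\;0
\]
by dominated convergence. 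So your closing assertion that ``$\max_i|Y_{n,i}|$ is not $o_P(1)$ under the $L^2$ assumption alone'' is incorrect, and the paper uses precisely the uniform Taylor control you rule out. Your two-scale truncation, combined with the density observation $W_i\ge-1$ to kill the negative tail, is a legitimate alternative; it avoids having to isolate the continuous remainder $h$, but at the price of a more elaborate splitting argument than is actually needed.
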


\vspace{-0.25in}
Now using Proposition \ref{lanfd} and Le Cam's third lemma, we establish the local asymptotic behaviour of $n\hat\zeta_n$ in the following theorem.

\vspace{0.05in}
\begin{thm}
Let ${\bf X}_1,{\bf X}_2,\ldots, {\bf X}_n$ be independent copies of ${\bf X}\sim F_{1-\beta_n n^{-1/2}}$ and ${\bf X}_i^\prime = \|{\bf X}_i\|{\bf U}_i$ where ${\bf U}_1,{\bf U}_2,\ldots,{\bf U}_n$ are i.i.d. Unif$(\mathcal{S}^{d-1})$. Also let $\lambda_k$ be the eigenvalue with corresponding eigenfunction $\psi_k$ ($k=1,2,\ldots$) of the integral equation
$$\E\{g\big(({\bf x}_1,{\bf x}_1^\prime),({\bf X}_1,{\bf X}_1^\prime)\big)\psi_k\big({\bf X}_1,{\bf X}_1^\prime\big)\} = \lambda_k \psi_{k}\big({\bf x}_1,{\bf x}_1^\prime\big),$$
where $g$ is as in equation (\ref{kernel-function}). Then, as $n$ tends to infinity,
$$n\hat\zeta_n\stackrel{D}{\longrightarrow}\sum_{k=1}^\infty \lambda_k \left(\big(Z_k+\beta ~\E_F\{\psi_k({\bf X}_1,{\bf X}_1^\prime)\}\big)^2-1\right),$$
where ${Z_i}$ is a sequence of i.i.d. standard normal random variables.
    \label{local-limit}
\end{thm}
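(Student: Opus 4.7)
The plan is to combine the spectral representation of $n\hat\zeta_n$ (obtained under the spherical ``null'' distribution $G$) with the LAN expansion of Proposition~\ref{lanfd} via Le~Cam's third lemma. Observe that $F_{1-\beta_n n^{-1/2}}=(1-\beta_n n^{-1/2})G+(\beta_n n^{-1/2})F$ with $G$ spherically symmetric, so under $G$ the kernel $g$ is first-order degenerate (Theorem~\ref{large-sample-distribution-1}). By Mercer's theorem applied to the Hilbert--Schmidt operator with kernel $g$ on $L^2(G\otimes\mathrm{Unif}(\mathcal{S}^{d-1}))$, we may expand
\[
g\bigl((\mathbf{x},\mathbf{x}'),(\mathbf{y},\mathbf{y}')\bigr)=\sum_{k\ge 1}\lambda_k\psi_k(\mathbf{x},\mathbf{x}')\psi_k(\mathbf{y},\mathbf{y}'),
\]
with $\{\psi_k\}$ orthonormal in $L^2$, $\E_G[\psi_k(\mathbf{X}_1,\mathbf{X}_1')]=0$ and $\sum_k\lambda_k^2<\infty$. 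Plugging this into the $U$-statistic expression for $\hat\zeta_n$ and using the standard telescoping identity for degenerate $U$-statistics yields, under $G$,
\[
n\hat\zeta_n=\sum_{k\ge 1}\lambda_k\bigl(W_{nk}^2-1\bigr)+o_P(1),\qquad
W_{nk}:=\frac{1}{\sqrt{n}}\sum_{i=1}^n\psi_k(\mathbf{X}_i,\mathbf{X}_i').
\]

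Next I would invoke Proposition~\ref{lanfd} to write the log-likelihood ratio of $F_{1-\beta_n n^{-1/2}}^n$ with respect to $G^n$ as
\[
L_n:=\frac{\beta_n}{\sqrt{n}}\sum_{i=1}^n\Bigl\{\tfrac{f(\mathbf{X}_i)}{g(\mathbf{X}_i)}-1\Bigr\}-\tfrac{\beta_n^2}{2}\sigma_{fg}^2+o_P(1),\qquad\sigma_{fg}^2=\E_G\!\left[\Bigl(\tfrac{f}{g}-1\Bigr)^{\!2}\right],
\]
which under $G$ is asymptotically $N(-\tfrac12\beta^2\sigma_{fg}^2,\beta^2\sigma_{fg}^2)$; by Le~Cam's first lemma the sequences $\{F_{1-\beta_n n^{-1/2}}^n\}$ and $\{G^n\}$ are mutually contiguous. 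The multivariate CLT applied to the i.i.d.\ bounded summands gives, for any fixed $K$, joint convergence under $G$ of $(W_{n1},\ldots,W_{nK},L_n)$ to a Gaussian vector with covariance
\[
\mathrm{Cov}_{G}\bigl(\psi_k(\mathbf{X}_1,\mathbf{X}_1'),\,\tfrac{f(\mathbf{X}_1)}{g(\mathbf{X}_1)}-1\bigr)\cdot\beta=\beta\,\E_G\!\left[\psi_k(\mathbf{X}_1,\mathbf{X}_1')\bigl\{\tfrac{f}{g}-1\bigr\}\right]=\beta\,\E_F[\psi_k(\mathbf{X}_1,\mathbf{X}_1')],
\]
where I used $\E_G[\psi_k]=0$ and a change of measure. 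Le~Cam's third lemma then yields, under $F_{1-\beta_n n^{-1/2}}^n$, the joint weak limit $(W_{n1},\ldots,W_{nK})\Rightarrow(Z_1+\beta\,\E_F[\psi_1],\ldots,Z_K+\beta\,\E_F[\psi_K])$ with the $Z_k$ i.i.d.\ standard normal, because the distinct $\psi_k$'s remain asymptotically uncorrelated (orthonormal in $L^2(G)$).

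Finally I would pass from the $K$-term truncation to the infinite series. By the continuous mapping theorem, the truncated statistic $\sum_{k\le K}\lambda_k(W_{nk}^2-1)$ converges under the alternative to $\sum_{k\le K}\lambda_k((Z_k+\beta\E_F[\psi_k])^2-1)$. The tail of the infinite-sum limit is controlled by $\sum_{k>K}\lambda_k^2(1+\beta^2\E_F[\psi_k]^2)<\infty$ (using Bessel's inequality to bound $\sum_k\E_F[\psi_k]^2\le\E_F[1+(f/g)^2]$-type quantities and $\sum_k\lambda_k^2<\infty$). The corresponding pre-limit tail $\sum_{k>K}\lambda_k(W_{nk}^2-1)$ is negligible under $G$ because its second moment is bounded by $2\sum_{k>K}\lambda_k^2\to 0$ as $K\to\infty$ uniformly in $n$, and by contiguity this negligibility transfers to $F_{1-\beta_n n^{-1/2}}^n$. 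A standard $K\to\infty$ Slutsky argument then gives the stated limit
\[
n\hat\zeta_n\stackrel{D}{\longrightarrow}\sum_{k\ge 1}\lambda_k\Bigl(\bigl(Z_k+\beta\,\E_F[\psi_k(\mathbf{X}_1,\mathbf{X}_1')]\bigr)^{\!2}-1\Bigr).
\]
The main obstacle is the last step: justifying uniform-in-$n$ tail control of the spectral expansion and transporting it through the contiguous change of measure, so that the third-lemma shift in each coordinate aggregates correctly inside the infinite sum. The degeneracy of $g$ under $G$ (giving the clean $\lambda_k(W_{nk}^2-1)$ structure) together with the $\ell_2$ summability of $\{\lambda_k\}$ and Proposition~\ref{lanfd}'s finite information assumption $\E_G[(f/g-1)^2]<\infty$ are exactly what make this truncation argument go through.
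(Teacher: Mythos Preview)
Your proposal is correct and follows essentially the same route as the paper: spectral expansion of the degenerate kernel under $G$, the LAN expansion of Proposition~\ref{lanfd}, a joint CLT for the eigenfunction sums together with the log-likelihood ratio, the covariance computation $\mathrm{Cov}_G(\psi_k,L_n)=\beta\,\E_F[\psi_k]$, and Le~Cam's third lemma to obtain the mean shift in each coordinate. The paper's own proof is in fact terser than yours---after computing the same covariance $\tau$ and invoking the third lemma for a single $h$, it simply appeals to ``similar arguments as in Theorem~1 in p.79 from \cite{lee2019u} and contiguity arguments'' for the passage to the full infinite series, whereas you spell out the $K$-truncation and the $\ell_2$ tail control explicitly.
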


\vspace{0.05in}
Theorem \ref{local-limit} shows that for $\beta>0$, the local limit distribution of $n\hat\zeta_n$ is stochastically larger than its limiting null distribution as in Theorem \ref{large-sample-distribution-1}. Now, the following theorem establishes that the local limiting distribution of the permuted statistic $n\hat\zeta_{n}(\pi)$ under the sequence of alternatives $F_{1-\beta_nn^{-1/2}}$ is identical to the asymptotic null distribution of $n\hat\zeta_n$.

\vspace{0.05in}
\begin{thm}
  Let ${\bf X}_1,{\bf X}_2,\ldots, {\bf X}_n$ be independent copies of ${\bf X}\sim\Pr_n$ and $\hat\zeta_n(\pi)$ be the resampling analog of the proposed test statistic obtained using Algorithm A-C in Section \ref{test}. Then, under any fixed alternative (i.e., $\Pr_n = F$ for some distribution $F$ with $\zeta(F)>0$) or a contiguous alternative (i.e., $\Pr_n = F_{1-\beta_nn^{-1/2}}$), as $n$ grows to infinity, {  given the augmented data $\mathcal{D}^\prime$, the conditional distribution of $n\hat\zeta_{n}(\pi)$ weakly converges to the distribution of $\sum_{k=1}^\infty \lambda_k (Z_k^2-1)$ in probability,} where $\{Z_k\}$ is a sequence of i.i.d standard normal random variables and $\{\lambda_k\}$ is a square summable sequence of real numbers.
 \label{local-limit-per-2}
\end{thm}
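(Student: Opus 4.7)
The plan is to recognize $\hat\zeta_n(\pi)$ as a degenerate second-order $U$-statistic under a distribution-dependent symmetrized measure. The starting observation is the algebraic identity
\[ \hat\zeta_n(\pi) \;=\; \binom{n}{2}^{-1} \sum_{1\le i<j\le n} \epsilon_i \epsilon_j\, g\bigl(({\bf X}_i,{\bf X}_i^\prime),({\bf X}_j,{\bf X}_j^\prime)\bigr), \qquad \epsilon_i := 2\pi(i)-1, \]
which I would verify by checking that the swap ${\bf X}_i \leftrightarrow {\bf X}_i^\prime$ flips the sign of the kernel in \eqref{kernel-function}. Since each $\pi(i)$ is Bernoulli$(1/2)$ in step~(II) of the resampling algorithm, the $\epsilon_i$'s are i.i.d.\ Rademacher variables independent of the data. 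Equivalently, regarding both $\pi$ and the data as random, the pairs $({\bf Y}_i,{\bf Y}_i^\prime)$ are i.i.d.\ from the symmetrized joint law $Q_n:=\tfrac12(P_n+P_n^{\mathrm{swap}})$, where $P_n$ is the joint distribution of $({\bf X},\|{\bf X}\|{\bf U})$ with ${\bf X}\sim\Pr_n$ and $P_n^{\mathrm{swap}}$ its coordinate swap; under this representation, $\hat\zeta_n(\pi)$ is literally a standard $U$-statistic.

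By construction $Q_n$ is exchangeable in its two coordinates, and combined with the antisymmetry relation $g\bigl((v^\prime,v),\cdot\bigr)=-g\bigl((v,v^\prime),\cdot\bigr)$ this yields $\E_{Q_n}\bigl[g\bigl((v,v^\prime),(W,W^\prime)\bigr)\bigr]=0$ for every $(v,v^\prime)$. Hence $g$ is canonical under $Q_n$. Since $|g|\le 4$, the kernel is square-integrable on $L^2(Q_n\otimes Q_n)$, and the classical limit theorem for degenerate second-order $U$-statistics yields
\[ n\hat\zeta_n(\pi) \;\stackrel{D}{\longrightarrow}\; \sum_{k=1}^\infty \lambda_k\bigl(Z_k^2-1\bigr), \]
with $\{Z_k\}$ i.i.d.\ standard normal and $\{\lambda_k\}$ the square-summable eigenvalues of the compact self-adjoint integral operator $T_{Q_n}:\psi\mapsto\E_{Q_n}[g(\cdot,W)\psi(W)]$ on $L^2(Q_n)$. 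In the fixed alternative case $\Pr_n\equiv F$, the measure $Q_n=Q_F$ is fixed, and this is immediately the desired conclusion.

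For the contiguous alternative $\Pr_n=(1-\beta_n n^{-1/2})G+\beta_n n^{-1/2}F$, the total variation distance between $\Pr_n$ and $G$ is $O(n^{-1/2})$, so both $P_n$ and $Q_n$ converge to their null counterparts in total variation at that rate; moreover, under $H_0$ the spherical symmetry of ${\bf X}$ forces ${\bf X}\stackrel{D}{=}\|{\bf X}\|{\bf U}$, so $P_G$ itself is coordinate-exchangeable and $Q_G=P_G$. Perturbation theory for compact self-adjoint Hilbert--Schmidt operators (e.g., the Hoffman--Wielandt inequality applied to $T_{Q_n}-T_{Q_G}$) then delivers $\lambda_{k,n}\to\lambda_k$ for every $k$, together with a uniform tail bound $\sum_{k>K}\lambda_{k,n}^2\le\|g\|_{L^2(Q_n\otimes Q_n)}^2<\infty$. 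A triangular-array version of the degenerate $U$-statistic limit theorem then gives $n\hat\zeta_n(\pi)\stackrel{D}{\longrightarrow}\sum_k\lambda_k(Z_k^2-1)$, matching the null limit of Theorem~\ref{large-sample-distribution-1} exactly. The principal obstacle lies in this triangular-array step: one has to control simultaneously the finite-dimensional convergence of the leading Hoeffding projections driven by $Q_n\to Q_G$ and the uniform negligibility of the spectral tail, which is where the boundedness of $g$ and the contiguity assumption $\int(f/g-1)^2 g\,\mathrm d{\bf x}<\infty$ from Proposition~\ref{lanfd} enter decisively.
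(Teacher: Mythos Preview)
Your treatment of the fixed alternative is essentially the paper's own argument: you recognize that the swapped pairs $({\bf Y}_i,{\bf Y}_i^\prime)$ are i.i.d.\ from a coordinate-exchangeable law, whence the antisymmetry $g\bigl((v^\prime,v),\cdot\bigr)=-g\bigl((v,v^\prime),\cdot\bigr)$ forces first-order degeneracy and the standard degenerate $U$-statistic limit applies. Your Rademacher identity $\hat\zeta_n(\pi)=\binom{n}{2}^{-1}\sum_{i<j}\epsilon_i\epsilon_j\,g\bigl(({\bf X}_i,{\bf X}_i^\prime),({\bf X}_j,{\bf X}_j^\prime)\bigr)$ is a clean repackaging of the same fact.

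For the contiguous alternative your route diverges from the paper's. You attack the problem by operator perturbation: let $Q_n\to Q_G$ in total variation, push eigenvalues through a Hoffman--Wielandt-type bound, and then invoke a triangular-array degenerate $U$-statistic CLT. As you yourself flag, that last step is the genuine hurdle---such theorems exist but are not off-the-shelf, and the operators $T_{Q_n}$ and $T_{Q_G}$ live in different $L^2$ spaces, so the perturbation comparison itself needs care. The paper instead avoids all of this by staying on the null measure $G$ and applying Le~Cam's third lemma directly: under $G$, the Hoeffding eigenfunctions $\varphi_k$ inherit the antisymmetry $\varphi_k\bigl((x^\prime,x)\bigr)=-\varphi_k\bigl((x,x^\prime)\bigr)$ from $g$, so when $\pi$ is Bernoulli$(1/2)$ the covariance between $n^{-1/2}\sum_i\varphi_k({\bf Y}_i,{\bf Y}_i^\prime)$ and the log-likelihood ratio vanishes identically. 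Le~Cam's lemma then says the limit under $F_{1-\beta_n n^{-1/2}}$ coincides with the null limit, with no tracking of varying eigenvalues or spaces. Both approaches are valid in principle, but the paper's is shorter and sidesteps precisely the triangular-array obstacle you identify; it is worth seeing how the same antisymmetry you used for degeneracy does double duty in the Le~Cam calculation.
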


Theorems \ref{local-limit} and \ref{local-limit-per-2} together show that under $F_{1-\beta_n n^{-1/2}}$, the power of our test converges to a non-trivial limit, and as $\beta$ starts increasing from zero, the power of our test gradually increases from $\alpha$ to one. This establishes that our test is efficient in the Pitman sense. However, the exact expression of the limit is not analytically tractable.

\begin{table}[t]
    \centering
     \caption{\centering Powers of the proposed test for the alternative $F_{1-\beta_n n^{-1/2}}$
     $= (1-\beta_n n^{-1/2}){\mathcal N}_{10}(0,I)+\beta_n n^{-1/2} {\mathcal N}_{10}(0,0.5I+0.5J)$ when $\beta_n = 5n^{\gamma}$.}

    \begin{tabular}{cccc}
       Sample Size  & $\gamma=-0.1$ & $\gamma=0$ & $\gamma=0.1$\\
    \hline
       $50$  & 0.145 & 0.361 & 0.789\\
       $100$ & 0.147 & 0.375 & 0.932\\
       $250$ & 0.108 & 0.373 & 0.991\\
       $500$ & 0.104 & 0.376 & 0.999\\
    \hline  
    \end{tabular}
   
\label{tab:efficiency}
\end{table}

We now present a small simulation study. We generate $n$ observations from $F_{1-\beta_n n^{-1/2}}$ in $\R^{10}$ where $G$ is the standard normal distribution and $F$ is a normal distribution with mean zero and variance-covariance matrix ${\bf \Sigma} = (0.5) ~{\bf I}_{10}+ (0.5) ~{\bf 1}_{10}{\bf 1}_{10}^{\top}$, for ${\bf 1}_d=(1,1,\ldots,1)^{\top}$ being the $d$-dimensional vector with all elements
equal to one. We consider three different sequences (a) $\beta_n=5 n^{-0.1}$, (b) $\beta_n=5$ and (c) $\beta_n=5 n^{0.1}$ and evaluate the performance of our test against these alternatives. The p-value of the test is approximated using the randomized p-value with $B=500$ and the power of the test is evaluated by the proportion of times it rejects $H_0$ in 1000 repetitions of each experiment. In Table \ref{tab:efficiency}, we see that for case (a), the power of our test shows a decreasing trend with increasing sample size. For case (b), the power exhibits convergence towards $0.37$, which can be considered as the Pitman efficiency of our test when $\beta_n = 5$. For case (c), we see that the power of our test converges to one with increasing sample size. This behavior of our test supports our theoretical findings in this section.


\section{{  Simulation studies}}
\label{sec:simulation}
In this section, we investigate the empirical performance of the proposed test. First, we study its finite sample level property and then compare its empirical power with the tests based on optimal transport \citep[][]{huang2023multivariate}, Monte Carlo method \citep[][]{diks1999} and projection pursuit technique \citep[][]{fang1993}. Henceforth, we refer to these tests as the OT test, the DT test and the PP test respectively. Throughout this article, all tests are considered to have 5\% nominal level. The OT test is distribution-free and the PP test is asymptotically distribution-free. Following the suggestion of the authors, for these two tests we use the cut-offs based on the asymptotic distributions of the corresponding test statistics. Our test and the DT test are calibrated using the resampling method, where the cut-offs are computed based on 500 iterations. Each experiment is repeated 1000 times to estimate the power of a test by the proportion of times it rejects $H_0$. The R codes of all tests are available in the supplementary material.

\begin{figure}[t]
\centering
\begin{tikzpicture}[scale = 0.95]
\begin{axis}[xmin = 1, xmax = 10, ymin = 0, ymax = 0.25, xlabel = {$\log_2(d)$}, ylabel = {Estimates}, title = {\bf Example 1(a)}]
\addplot[color = red,   mark = *, step = 1cm,very thin]coordinates{(1,0.042)(2,0.044)(3,0.046)(4,0.039)(5,0.043)(6,0.047)(7,0.048)(8,0.052)(9,0.048)(10,0.05)};

\addplot[color = purple,   mark = *, step = 1cm,very thin]coordinates{(1,0.037)(2,0.05)(3,0.041)(4,0.051)(5,0.041)(6,0.052)(7,0.039)(8,0.056)(9,0.045)(10,0.035)};

\addplot[color = violet,   mark = *, step = 1cm,very thin]coordinates{(1,0.05)(2,0.048)(3,0.055)(4,0.051)(5,0.052)(6,0.051)(7,0.044)(8,0.049)(9,0.044)(10,0.052)};

\end{axis}
\end{tikzpicture}
\begin{tikzpicture}[scale = 0.95]
\begin{axis}[xmin = 1, xmax = 10, ymin = 0, ymax = 0.25, xlabel = {$\log_2(d)$}, ylabel = {Estimates}, title = {\bf Example 1(b)}]
\addplot[color = red,   mark = *, step = 1cm,very thin]coordinates{(1,0.05)(2,0.05)(3,0.041)(4,0.05)(5,0.063)(6,0.063)(7,0.054)(8,0.05)(9,0.044)(10,0.049)};

\addplot[color = purple,   mark = *, step = 1cm,very thin]coordinates{(1,0.047)(2,0.052)(3,0.044)(4,0.049)(5,0.044)(6,0.052)(7,0.052)(8,0.054)(9,0.046)(10,0.052)};

\addplot[color = violet,   mark = *, step = 1cm,very thin]coordinates{(1,0.056)(2,0.051)(3,0.065)(4,0.063)(5,0.053)(6,0.051)(7,0.043)(8,0.053)(9,0.058)(10,0.043)};

\end{axis}
\end{tikzpicture}
\begin{tikzpicture}[scale = 0.95]
\begin{axis}[xmin = 1, xmax = 10, ymin = 0, ymax = 0.25, xlabel = {$\log_2(d)$}, ylabel = {Estimates}, title = {\bf Example 1(c)}]
\addplot[color = red,   mark = *, step = 1cm,very thin]coordinates{(1,0.047)(2,0.056)(3,0.057)(4,0.052)(5,0.053)(6,0.043)(7,0.041)(8,0.05)(9,0.047)(10,0.042)};

\addplot[color = purple,   mark = *, step = 1cm,very thin]coordinates{(1,0.061)(2,0.057)(3,0.054)(4,0.048)(5,0.043)(6,0.05)(7,0.053)(8,0.044)(9,0.046)(10,0.059)};

\addplot[color = violet,   mark = *, step = 1cm,very thin]coordinates{(1,0.057)(2,0.064)(3,0.054)(4,0.055)(5,0.059)(6,0.057)(7,0.041)(8,0.049)(9,0.049)(10,0.051)};

\end{axis}
\end{tikzpicture}    
\caption{Observed levels of the proposed test for observations generated from the standard (a) Gaussian, (b) Cauchy, and (c) $t_4$ distributions with sample size $n = 20$(\textcolor{red}{$\tikzcircle{2pt}$}), $n=40$ (\textcolor{violet}{$\tikzcirclev{2pt}$}) and $n=60$ (\textcolor{pink}{$\tikzcirclep{2pt}$}) in dimensions $d=2^i, i=1,2,\ldots,10$.}
    \label{fig:level}
\end{figure}

First, we investigate the level property of our test by generating random samples from some spherically symmetric distributions. In particular, we consider the standard multivariate (a) Gaussian, (b) Cauchy and (c) $t_4$ ($t$ distribution with $4$ degrees of freedom) distributions and call them Examples 1(a), 1(b) and 1(c), respectively. In each case, we compute the powers for different sample sizes ($n = 20,40,60$) and dimensions ($d=2^i, i=1,2,\ldots, 10$), and they are reported in Figure \ref{fig:level}. This figure clearly shows that for all three distributions, our test rejects $H_0$ in nearly 5\% cases. 
The other three competing tests also exhibit similar behaviour, but to avoid repetition we do not report them here.

\begin{figure}[t]
\centering
    
\begin{tikzpicture}
\begin{axis}[xmin = 0, xmax = 0.91, ymin = 0, ymax = 1, xlabel = {$\rho$}, ylabel = {Estimates}, title = {\bf Example 2(a)}]
\addplot[color = red,   mark = *, step = 1cm,very thin]coordinates{(0,0.052)(0.1,0.064)(0.3,0.313)(0.5,0.927)(0.7,1)(0.9,1)};

\addplot[color = blue,   mark = diamond*, step = 1cm,very thin]coordinates{(0,0.052)(0.1,0.05)(0.3,0.054)(0.5,0.078)(0.7,0.1)(0.9,0.117)};

\addplot[color = black,   mark = star, step = 1cm,very thin]coordinates{(0,0.039)(0.1,0.053)(0.3,0.15)(0.5,0.509)(0.7,0.969)(0.9,1)};

\addplot[color = applegreen,   mark = square*, step = 1cm,very thin]coordinates{(0,0.054)(0.1,0.061)(0.3,0.055)(0.5,0.067)(0.7,0.072)(0.9,0.069)};


\end{axis}
\end{tikzpicture}
\begin{tikzpicture}
\begin{axis}[xmin = 0, xmax = 0.91, ymin = 0, ymax = 1, xlabel = {$\rho$}, ylabel = {Estimates}, title = {\bf Example 2(b)}]
\addplot[color = red,   mark = *, step = 1cm,very thin]coordinates{(0,0.053)(0.1,0.063)(0.3,0.19)(0.5,0.652)(0.7,0.99)(0.9,1)};

\addplot[color = blue,   mark = diamond*, step = 1cm,very thin]coordinates{(0,0.045)(0.1,0.044)(0.3,0.053)(0.5,0.07)(0.7,0.086)(0.9,0.12)};

\addplot[color = black,   mark = star, step = 1cm,very thin]coordinates{(0,0.052)(0.1,0.059)(0.3,0.118)(0.5,0.324)(0.7,0.785)(0.9,1)};

\addplot[color = applegreen,   mark = square*, step = 1cm,very thin]coordinates{(0,0.041)(0.1,0.045)(0.3,0.049)(0.5,0.062)(0.7,0.063)(0.9,0.068)};

\end{axis}
\end{tikzpicture}
\begin{tikzpicture}
\begin{axis}[xmin = 0, xmax = .91, ymin = 0, ymax = 1, xlabel = {$\rho$}, ylabel = {Estimates}, title = {\bf Example 2(c)}]
\addplot[color = red,   mark = *, step = 1cm,very thin]coordinates{(0,0.05)(0.1,0.063)(0.3,0.309)(0.5,0.871)(0.7,0.998)(0.9,1)};

\addplot[color = blue,   mark = diamond*, step = 1cm,very thin]coordinates{(0,0.047)(0.1,0.05)(0.3,0.064)(0.5,0.079)(0.7,0.099)(0.9,0.117)};

\addplot[color = black,   mark = star, step = 1cm,very thin]coordinates{(0,0.049)(0.1,0.058)(0.3,0.135)(0.5,0.424)(0.7,0.904)(0.9,1)};

\addplot[color = applegreen,   mark = square*, step = 1cm,very thin]coordinates{(0,0.052)(0.1,0.056)(0.3,0.06)(0.5,0.061)(0.7,0.058)(0.9,0.065)};

\end{axis}
\end{tikzpicture}
\caption{Powers of the proposed test (\textcolor{red}{$\tikzcircle{2pt}$}), OT test (\textcolor{blue}{$\blacklozenge$}), DT test (\textcolor{black}{$\star$}) and PP test (\textcolor{applegreen}{$\blacksquare$}) in Examples 2(a)-(c).}
    \label{fig:power-1}
\end{figure}

\begin{figure}[t]
    \centering
    \begin{tikzpicture}
\begin{axis}[xmin = 20, xmax = 500, ymin = 0, ymax = 1, xlabel = {$n$}, ylabel = {Estimates}, title = {\bf Example 3(a)}]
\addplot[color = red,   mark = *, step = 1cm,very thin]coordinates{(20,0.067)(60,0.12)(100,0.175)(140,0.243)(180,0.356)(220,0.393)(260,0.523)(300,0.652)(340,0.745)(380,0.81)(420,0.86)(460,0.905)(500,0.951)};

\addplot[color = blue,   mark = diamond*, step = 1cm,very thin]coordinates{(20,0.068)(60,0.042)(100,0.057)(140,0.051)(180,0.052)(220,0.057)(260,0.058)(300,0.049)(340,0.053)(380,0.055)(420,0.047)(460,0.066)(500,0.06)};

\addplot[color = black,   mark = star, step = 1cm,very thin]coordinates{(20,0.079)(60,0.113)(100,0.116)(140,0.154)(180,0.155)(220,0.151)(260,0.158)(300,0.166)(340,0.179)(380,0.195)(420,0.181)(460,0.183)(500,0.187)};

\addplot[color = applegreen,   mark = square*, step = 1cm,very thin]coordinates{(20,0.055)(60,0.055)(100,0.056)(140,0.045)(180,0.052)(220,0.047)(260,0.059)(300,0.046)(340,0.052)(380,0.053)(420,0.05)(460,0.047)(500,0.067)};


\end{axis}
\end{tikzpicture}
\begin{tikzpicture}
\begin{axis}[xmin = 20, xmax = 500, ymin = 0, ymax = 1, xlabel = {$n$}, ylabel = {Estimates}, title = {\bf Example 3(b)}]
\addplot[color = red,   mark = *, step = 1cm,very thin]coordinates{(20,0.052)(60,0.1)(100,0.112)(140,0.131)(180,0.197)(220,0.21)(260,0.279)(300,0.332)(340,0.394)(380,0.493)(420,0.534)(460,0.605)(500,0.666)};

\addplot[color = blue,   mark = diamond*, step = 1cm,very thin]coordinates{(20,0.054)(60,0.048)(100,0.052)(140,0.051)(180,0.047)(220,0.051)(260,0.06)(300,0.047)(340,0.056)(380,0.06)(420,0.051)(460,0.06)(500,0.051)};

\addplot[color = black,   mark = star, step = 1cm,very thin]coordinates{(20,0.082)(60,0.102)(100,0.147)(140,0.162)(180,0.2)(220,0.241)(260,0.288)(300,0.382)(340,0.462)(380,0.48)(420,0.591)(460,0.632)(500,0.704)};

\addplot[color = applegreen,   mark = square*, step = 1cm,very thin]coordinates{(20,0.05)(60,0.052)(100,0.046)(140,0.044)(180,0.047)(220,0.042)(260,0.057)(300,0.051)(340,0.05)(380,0.052)(420,0.046)(460,0.051)(500,0.055)};

\end{axis}
\end{tikzpicture}

\begin{tikzpicture}
\begin{axis}[xmin = 20, xmax = 500, ymin = 0, ymax = 1, xlabel = {$n$}, ylabel = {Estimates}, title = {\bf Example 3(c)}]
\addplot[color = red,   mark = *, step = 1cm,very thin]coordinates{(20,0.038)(60,0.081)(100,0.115)(140,0.145)(180,0.228)(220,0.249)(260,0.298)(300,0.377)(340,0.472)(380,0.557)(420,0.6)(460,0.686)(500,0.753)};

\addplot[color = blue,   mark = diamond*, step = 1cm,very thin]coordinates{(20,0.049)(60,0.046)(100,0.04)(140,0.052)(180,0.055)(220,0.054)(260,0.048)(300,0.061)(340,0.052)(380,0.048)(420,0.052)(460,0.046)(500,0.054)};

\addplot[color = black,   mark = star, step = 1cm,very thin]coordinates{(20,0.055)(60,0.073)(100,0.079)(140,0.08)(180,0.095)(220,0.111)(260,0.146)(300,0.142)(340,0.135)(380,0.162)(420,0.165)(460,0.188)(500,0.239)};

\addplot[color = applegreen,   mark = square*, step = 1cm,very thin]coordinates{(20,0.041)(60,0.042)(100,0.048)(140,0.048)(180,0.05)(220,0.059)(260,0.046)(300,0.058)(340,0.052)(380,0.046)(420,0.051)(460,0.055)(500,0.05)};

\end{axis}
\end{tikzpicture}
\begin{tikzpicture}
\begin{axis}[xmin = 20, xmax = 200, ymin = 0, ymax = 1, xlabel = {$n$}, ylabel = {Estimates}, title = {\bf Example 3(d)}]
\addplot[color = red,   mark = *, step = 1cm,very thin]coordinates{(20,0.142)(60,0.598)(100,0.94)(140,0.99)(180,1)(220,1)};

\addplot[color = blue,   mark = diamond*, step = 1cm,very thin]coordinates{(20,0.065)(60,0.064)(100,0.063)(140,0.066)(180,0.067)(220,0.063)};

\addplot[color = black,   mark = star, step = 1cm,very thin]coordinates{(20,0.106)(60,0.196)(100,0.397)(140,0.609)(180,0.802)(220,0.898)};

\addplot[color = applegreen,   mark = square*, step = 1cm,very thin]coordinates{(20,0.052)(60,0.046)(100,0.063)(140,0.059)(180,0.052)(220,0.053)};

\end{axis}
\end{tikzpicture}
\caption{Powers of the proposed test (\textcolor{red}{$\tikzcircle{2pt}$}), OT test (\textcolor{blue}{$\blacklozenge$}), DT test (\textcolor{black}{$\star$}) and PP test (\textcolor{applegreen}{$\blacksquare$}) in Examples 3(a)-(d).}
\label{fig:power-2}
\end{figure}

To compare the empirical powers of different tests, again we consider examples involving (a) Gaussian, (b) Cauchy and (c) $t_4$ distributions with the centre at the origin, but this time we consider the scatter matrix of the form $(1-\rho) ~{\bf I}_d+\rho ~{\bf 1}_d{\bf 1}^{\top}_d$. where 
$\rho\in (0,1)$ (call them Examples 2(a), 2(b) and 2(c), respectively). Note that here the distributions are elliptically symmetric. For each example, we consider $d=5$ and carry out different tests based on $100$ observations. Powers of these tests are reported in Figure \ref{fig:power-1}. As $\rho$ increases from zero to one (i.e., the distribution deviates more from sphericity) one would expect the power of a test to increase. But, for OT and PP tests, these increments are negligible. In these examples, the proposed test has the best performance followed by the DT test.


Next, we consider four examples (call them Examples 3(a)-(d)) involving symmetric but non-elliptic distributions. In Examples 3(a) and 3(b), we deal with $\ell_p$-symmetric distributions \citep[see, e.g.,][]{gupta1997lp,dutta2011} with $p=\infty$ and $p=1$, respectively. In both cases, we generate observations on  ${\bf X} = R {\bf U}$, where ${\bf U}$ and $R \sim \text{Unif}(9,10)$ are independent. In Example 3(a), we have ${\bf U} = {\bf Y}/\|{\bf Y}\|_\infty$ where ${\bf Y} = (Y_1,\ldots, Y_5)$ is uniformly distributed over the 5-dimensional unit hypercube $\{{\bf y}=(y_1,y_2.\ldots,y_5)^{\top}: \max\{|y_1|,|y_2|,\ldots,|y_5|\}\leq 1\}$, while in Example 3(b), we have ${\bf U} = {\bf Y}/\|{\bf Y}\|_1$ where $Y_1,\ldots, Y_5$ are independent standard Laplace variables. We carry out our experiment with different sample sizes, and the results are reported in Figure \ref{fig:power-2}. In these examples, OT and PP tests have powers close to the nominal level of 0.05. In Example 3(b), the proposed test and the DT have comparable performance, but in Example 3(a), our test significantly outperforms the DT test.

\begin{figure}[h]
\centering
\begin{tikzpicture}[scale = 0.95]
\begin{axis}[xmin = 1, xmax = 5, ymin = 0, ymax = 1, xlabel = {$\log_2(d)$}, ylabel = {Estimates}, title = {\bf Example 4 (a)}]
\addplot[color = red,   mark = *, step = 1cm,very thin]coordinates{(1,0.054)(2,0.098)(3,0.239)(4,0.481)(5,0.686)};

\addplot[color = blue,   mark = diamond*, step = 1cm,very thin]coordinates{(1,0.052)(2,0.056)(3,0.099)(4,0.13)(5,0.159)};

\addplot[color = black,   mark = star, step = 1cm,very thin]coordinates{(1,0.077)(2,0.069)(3,0.088)(4,0.136)(5,0.194)};

\addplot[color = applegreen,   mark = square*, step = 1cm,very thin]coordinates{(1,0.051)(2,0.051)(3,0.057)(4,0.044)(5,0.038)};


\end{axis}
\end{tikzpicture}
\begin{tikzpicture}[scale = 0.95]
\begin{axis}[xmin = 1, xmax = 5, ymin = 0, ymax = 1, xlabel = {$\log_2(d)$}, ylabel = {Estimates}, title = {\bf Example 4 (b)}]
\addplot[color = red,   mark = *, step = 1cm,very thin]coordinates{(1,0.044)(2,0.051)(3,0.066)(4,0.058)(5,0.07)};

\addplot[color = blue,   mark = diamond*, step = 1cm,very thin]coordinates{(1,0.051)(2,0.046)(3,0.06)(4,0.069)(5,0.082)};

\addplot[color = black,   mark = star, step = 1cm,very thin]coordinates{(1,0.066)(2,0.056)(3,0.055)(4,0.062)(5,0.079)};

\addplot[color = applegreen,   mark = square*, step = 1cm,very thin]coordinates{(1,0.051)(2,0.04)(3,0.047)(4,0.039)(5,0.031)};

\end{axis}
\end{tikzpicture}
\begin{tikzpicture}[scale = 0.95]
\begin{axis}[xmin = 1, xmax = 5, ymin = 0, ymax = 1, xlabel = {$\log_2(d)$}, ylabel = {Estimates}, title = {\bf Example 4 (c)}]
\addplot[color = red,   mark = *, step = 1cm,very thin]coordinates{(1,0.045)(2,0.07)(3,0.093)(4,0.235)(5,0.729)};

\addplot[color = blue,   mark = diamond*, step = 1cm,very thin]coordinates{(1,0.042)(2,0.044)(3,0.066)(4,0.071)(5,0.082)};

\addplot[color = black,   mark = star, step = 1cm,very thin]coordinates{(1,0.069)(2,0.06)(3,0.058)(4,0.071)(5,0.082)};

\addplot[color = applegreen,   mark = square*, step = 1cm,very thin]coordinates{(1,0.064)(2,0.042)(3,0.047)(4,0.053)(5,0.056)};


\end{axis}
\end{tikzpicture}
\caption{Results of the proposed test (\textcolor{red}{$\tikzcircle{2pt}$}), OT test (\textcolor{blue}{$\blacklozenge$}), DT test (\textcolor{black}{$\star$}) and PP test (\textcolor{applegreen}{$\blacksquare$}) for Example 4(a)-(c).}
    \label{fig:power-3}
\end{figure}

In Example 3(c), we consider an angular symmetric distribution, We generate observations on ${\bf X}=R{\bf U}$, where ${\bf U} \sim \text{Unif}(\mathcal S^4)$ but $R$ and ${\bf U}$ are not independent. Here for any given ${\bf U}= {\bf u} (= (u_1,u_2,\ldots,u_5)^{\top})$ the conditional distribution of $R$ is uniform on $(0, \theta_{\bf u})$, where $\theta_{\bf u} = 10 ~{\mathrm I}[u_1u_2>0]+50 ~{\mathrm I}[u_1u_2\leq 0; u_3u_4u_5>0]+100 ~{\mathrm I}[u_1u_2\leq 0; u_3u_4u_5\leq 0]$, and $\mathrm I[\cdot]$ is the indicator function. In Example 3(d), observations are generated from an equal mixture of four normal distributions with the same dispersion matrix ${\bf I}_5$ and mean vectors ${\bf 1}_5$,$-{\bf 1}_5$, $\betavec=(1,-1,1,-1,1)^{\top}$ and $-\betavec$, respectively. In Figure \ref{fig:power-2}, we see that in these examples also, our test outperforms its competitors. The DT test has the second-best performance but its power is much lower compared to our proposed test.



Finally, we consider some high-dimensional examples. In Examples 4(a) and 4(b), we generate $n=20$ observations from a normal spiked covariance model \citep[see.][]{johnstone} with mean zero and a diagonal covariance matrix with entries $(d,1,1,\ldots, 1)$ and $(d^{0.5},1,1,\ldots, 1)$, respectively.
We carry out our experiment with different choices of $d$, and the results are reported in Figure \ref{fig:power-3}. This figure shows that in Example 4(a) the power curve of our test exhibits a sharp increasing trend with increasing dimensions, while the other tests have non-satisfactory performances. But in Example 4(b), all tests including ours perform poorly. Note that in Example 4(a) and 4(b), the measure of sphericity \citep[see, e.g.,][]{john,jung2009pca} converges to $0$ and $1$, respectively, as $d$ increases. So, in high dimension, the data cloud in Example 4(b) appears out to be similar to that from a spherical distribution, whereas in Example 4(a), it has significant deviations from sphericity. This explains the diametrically opposite behaviour of our test in these two examples. However, Theorem \ref{high-dim-consistency} suggests that even in Example 4(b), our test can perform well if we allow the sample size to increase with the dimensions at a suitable rate. We observe this in {Example 4(c)}, where we consider the same model as in Example 4(b), but increase the sample size with the dimension. Here, we consider  $n = 20 + [d^{1.5}]$, where $[t]$ denotes the largest integer smaller than or equal to $t$. Figure \ref{fig:power-3} shows a sharp increasing trend in the power curve of our test. But the other competing tests have poor performance even in this set-up. This clearly indicates the superiority of our test over the OT, DT, and PP tests for high-dimensional data.

{ 
\section{Test of spherical symmetry about an unknown center}
\label{sec:unknown-center}
In the previous sections, we assumed that the null hypothesis specifies the center of symmetry (without loss of generality, it was taken to be the origin).
If it is not specified, one can think of getting an
estimate of a reasonable measure of centrality (call it $\thetavec$) and implementing the test based on centered observations. 
The usual moment-based estimate of location can be used for this purpose. One can also use spatial median \citep[see, e.g.,][]{chaudhuri1996,koltchinskii1997} or other robust estimates \citep[see, e.g.,][]{rousseeuw1985,rousseeuw1999}.
If ${\hat \thetavec}$ is a consistent estimator of its population analog $\thetavec$, the test statistic computed based on the centered observations 
$\Xvec_1-{\hat \thetavec},\Xvec_2-{\hat \thetavec}, \ldots, \Xvec_n-{\hat \thetavec}$ consistently estimates the population measure of spherical asymmetry about $\theta$, and the corresponding cut-off satisfies similar property as in Lemma \ref{cutoff-bound}. This is asserted by the following theorem.

\begin{thm}
\label{thm:consistent-estimation-centering}
    Let ${\bf X}_1,\ldots, {\bf X}_n$ be independent copies of $\Xvec \sim \Pr$, which has an unknown measure of centrality $\thetavec$. Let  $\hat{\bm \theta}$ be a consistent estimator of ${\bm \theta}$ and $\Tilde{\zeta}_n$ be our proposed test statistic computed based on the centered data $\{{\bf X}_i - \hat{\bm \theta}\}_{1\leq i\leq n}$. Then, as $n$ grows to infinity, $\Tilde{\zeta}_n$ converges in probability to $\zeta(\Pr_{{\bf X}-\bm \theta})$, where $\Pr_{{\bf X}-\bm \theta}$ is the distribution of ${\bf X}-\bm \theta$. Also, the resampling cutoff $c_{1-\alpha}$ based on $\{{\bf X}_i - \hat{\bm \theta}\}_{1\leq i\leq n}$ satisfies $c_{1-\alpha}\leq 2(\alpha(n-1))^{-1}$ almost surely.
\end{thm}

Theorem \ref{thm:consistent-estimation-centering} clearly suggests that the test based on the centered data is large sample consistent against general alternatives. However, the proper calibration of the test is challenging. 
To demonstrate this, we consider some simulated examples involving bivariate distributions. In Example 5(a), we consider the standard normal distribution, while in Example 5(b), we use its truncated version, which has the support $\{\xvec \in {\mathbb R}^2:~ \|\xvec\|\geq 1\}$. In Example 5(c), we consider an equal mixture of $\mathcal{U}(2,3)$ and $\mathcal{U}(10,11)$, where $\mathcal{U}(a,b)$ denotes the uniform distribution on the region $\{\xvec \in {\mathbb R}^2: ~ a\leq \|\xvec\|\leq b\}$. In each case, we assume the center of symmetry to be unknown and carry out our experiment based centered observations. In all cases, instead of sample mean, the sample spatial median is used for centering because of its better robustness properties.
We consider samples of different sizes and in each case, the experiment is repeated $1000$ times to compute the power of the test. Note that in all three examples, the underlying distributions are spherically symmetric. So, any reasonable test is supposed to have powers close to the nominal level of 0.05. But the observed power (i.e., the proportion of times $H_0$ is rejected) of the proposed test and those of the other competing tests (OT, DT and PP tests) reported in Figure \ref{fig:power-4} show a completely different picture. 

\begin{figure}[t]
\centering

\begin{tikzpicture}[scale = 0.9]
\begin{axis}[xmin = 100, xmax = 500, ymin = 0, ymax = 0.25, xlabel = {Sample Size}, ylabel = {Estimates}, title = {\bf Example 5 (a)}]
\addplot[color = red,   mark = *, step = 1cm,very thin]coordinates{(100,0.014)(200,0.008)(300,0.01)(400,0.012)(500,0.012)};

\addplot[color = blue,   mark = diamond*, step = 1cm,very thin]coordinates{(100,0.007)(200,0.006)(300,0.009)(400,0.005)(500,0.003)};

\addplot[color = black,   mark = star, step = 1cm,very thin]coordinates{(100,0.02)(200,0.013)(300,0.018)(400,0.018)(500,0.019)};

\addplot[color = applegreen,   mark = square*, step = 1cm,very thin]coordinates{(100,0.003)(200,0.002)(300,0.004)(400,0.003)(500,0.002)};

\addplot[color = black, step = 1cm,very thin, dashed]coordinates{(100,0.05)(200,0.05)(300,0.05)(400,0.05)(500,0.05)};

\end{axis}
\end{tikzpicture}    
\begin{tikzpicture}[scale = 0.9]
\begin{axis}[xmin = 100, xmax = 500, ymin = 0, ymax = 1, xlabel = {Sample Size}, ylabel = {Estimates}, title = {\bf Example 5 (b)}]
\addplot[color = red,   mark = *, step = 1cm,very thin]coordinates{(100,0.018)(200,0.023)(300,0.012)(400,0.021)(500,0.026)};

\addplot[color = blue,   mark = diamond*, step = 1cm,very thin]coordinates{(100,0.254)(200,0.358)(300,0.36)(400,0.351)(500,0.398)};

\addplot[color = black,   mark = star, step = 1cm,very thin]coordinates{(100,0.419)(200,0.471)(300,0.469)(400,0.441)(500,0.46)};

\addplot[color = applegreen,   mark = square*, step = 1cm,very thin]coordinates{(100,0.071)(200,0.096)(300,0.083)(400,0.083)(500,0.095)};

\addplot[color = black, step = 1cm,very thin, dashed]coordinates{(100,0.05)(200,0.05)(300,0.05)(400,0.05)(500,0.05)};

\end{axis}
\end{tikzpicture}
\begin{tikzpicture}[scale = 0.9]
\begin{axis}[xmin = 100, xmax = 500, ymin = 0, ymax = 1, xlabel = {Sample Size}, ylabel = {Estimates}, title = {\bf Example 5 (c)}]
\addplot[color = red,   mark = *, step = 1cm,very thin]coordinates{(100,0.309)(200,0.345)(300,0.346)(400,0.353)(500,0.355)};

\addplot[color = blue,   mark = diamond*, step = 1cm,very thin]coordinates{(100,0.255)(200,0.64)(300,0.721)(400,0.781)(500,0.798)};

\addplot[color = black,   mark = star, step = 1cm,very thin]coordinates{(100,0.821)(200,0.841)(300,0.844)(400,0.841)(500,0.839)};

\addplot[color = applegreen,   mark = square*, step = 1cm,very thin]coordinates{(100,0.011)(200,0.023)(300,0.018)(400,0.021)(500,0.025)};

\addplot[color = black, step = 1cm,very thin, dashed]coordinates{(100,0.05)(200,0.05)(300,0.05)(400,0.05)(500,0.05)};
\end{axis}
\end{tikzpicture}
\caption{Powers of the proposed test (\textcolor{red}{$\tikzcircle{2pt}$}), the OT test (\textcolor{blue}{$\blacklozenge$}), the DT test (\textcolor{black}{$\star$}) and the PP test (\textcolor{applegreen}{$\blacksquare$}) in Examples 5(a)-(c).}
    \label{fig:power-4}
\end{figure}

In Example 5(a), all tests satisfy the level property (had powers below $0.05$), but they are much conservative. However, in Example 5(b), only our test turns out to be a valid level $\alpha$ test ($\alpha=0.05$).   Powers of all other tests are much higher than the nominal level. In Example 5(c), our test also fails to satisfy the level property. 
These examples show that 
none of these tests based on centered observations can be considered as a valid 
$\alpha$ test, and hence they can yield misleading inferences.
To address this issue, here we propose a modification, which is motivated by the following result.
\begin{lemma}
\label{thm:key-transformation}
    Let ${\bf X}_1$ and ${\bf X}_2$ be two independent copies of $\Xvec \sim P_{\thetavec}$, which is symmetric about $\thetavec \in {\mathbb R}^d$ (i.e., ${\bf X}-\thetavec \stackrel{D}{=} \thetavec-{\bf X}$). Then ${\bf X}_1-{\bf X}_2$ follows a spherically symmetric distribution (about the origin) if and only if $P_{\thetavec}$  is spherically symmetric about $\thetavec$.
\end{lemma}

So, if the underlying distribution is symmetric about an unknown location $ \thetavec$, we can divide the dataset $\mathcal{D}$ into two equal halves (ignore one observation, if needed), 
$\mathcal{D}_1 = \{{\bf X}_i\}_{1\leq i\leq m}$ and $\mathcal{D}_2 = \{{\bf X}_i\}_{m+1\leq i \leq 2m}$, (where $m=\lfloor n/2 \rfloor$) and apply our test on the observational differences $\mathcal{Z} = \{{\bf Z}_i = {\bf X}_i - {\bf X}_{m+i}\}_{1\leq i\leq m}$. Clearly, the resulting test will have the desired level property. We can also establish its minimax rate optimality property with respect to the distribution of the difference. 
To investigate its empirical performance,in Examples 6 (a)-(c), we generate random observations on $\Yvec = \sigmat^{1/2} \Xvec$, where $\sigmat$ is a $2\times2$ matrix with diagonals one and off diagonals $0.5$ and $\Xvec$ follows the same distributions as in Examples 5(a)-(c), respectively. We compute the power of the tests both based on (i) centered (using sample spatial median) observations and (ii) pairwise differences of the observations as discussed above. Here also, the experiment is carried out for various sample sizes, and the results are displayed in Figure \ref{fig:power-5}.

The top row in Figure \ref{fig:power-5} shows the results for tests based on centered observations. The DT test and the proposed test have much higher powers compared to others, with the former having an edge in Examples 6(a) and 6(b). But note that while our test maintains the level property in Examples 5(a) and 5(b),  the DT test fails in Example 5(b). In Example 5(c), both of them fail to satisfy the level property. So, we cannot rely on the results in Example 6(c) for fair comparison.

The results for the tests based on the pairwise differences of the observations (reported in the bottom row)
are more reliable in this context since they satisfy the level property. Here we observe that in all three examples, while the powers of the proposed test and the DT test steadily increase with the sample size, OT and PP tests do not have satisfactory performance. In terms of overall performance, the proposed test has an edge.


\begin{figure}[t]
\centering

\begin{tikzpicture}[scale = 0.9]
\begin{axis}[xmin = 100, xmax = 500, ymin = 0, ymax = 1, xlabel = {Sample Size}, ylabel = {Estimates}, title = {\bf Example 6 (a)}]
\addplot[color = red,   mark = *, step = 1cm,very thin]coordinates{(100,0.189)(200,0.591)(300,0.89)(400,0.982)(500,1)};

\addplot[color = blue,   mark = diamond*, step = 1cm,very thin]coordinates{(100,0.004)(200,0.004)(300,0.007)(400,0.002)(500,0.002)};

\addplot[color = black,   mark = star, step = 1cm,very thin]coordinates{(100,0.402)(200,0.839)(300,0.984)(400,0.997)(500,1)};

\addplot[color = applegreen,   mark = square*, step = 1cm,very thin]coordinates{(100,0.003)(200,0.002)(300,0.003)(400,0.002)(500,0)};


\end{axis}
\end{tikzpicture}    
\begin{tikzpicture}[scale = 0.9]
\begin{axis}[xmin = 100, xmax = 500, ymin = 0, ymax = 1, xlabel = {Sample Size}, ylabel = {Estimates}, title = {\bf Example 6 (b)}]
\addplot[color = red,   mark = *, step = 1cm,very thin]coordinates{(100,0.555)(200,0.916)(300,0.996)(400,1)(500,1)};

\addplot[color = blue,   mark = diamond*, step = 1cm,very thin]coordinates{(100,0.162)(200,0.236)(300,0.224)(400,0.258)(500,0.267)};

\addplot[color = black,   mark = star, step = 1cm,very thin]coordinates{(100,0.901)(200,0.995)(300,1)(400,1)(500,1)};

\addplot[color = applegreen,   mark = square*, step = 1cm,very thin]coordinates{(100,0.035)(200,0.056)(300,0.05)(400,0.057)(500,0.052)};


\end{axis}
\end{tikzpicture}
\begin{tikzpicture}[scale = 0.9]
\begin{axis}[xmin = 100, xmax = 700, ymin = 0, ymax = 1, xlabel = {Sample Size}, ylabel = {Estimates}, title = {\bf Example 6 (c)}]
\addplot[color = red,   mark = *, step = 1cm,very thin]coordinates{(100,0.881)(200,0.999)(300,1)(400,1)(500,1)(600,1)(700,1)};

\addplot[color = blue,   mark = diamond*, step = 1cm,very thin]coordinates{(100,0.011)(200,0.038)(300,0.056)(400,0.072)(500,0.102)(600,0.105)(700,0.153)};

\addplot[color = black,   mark = star, step = 1cm,very thin]coordinates{(100,1)(200,1)(300,1)(400,1)(500,1)(600,1)(700,1)};

\addplot[color = applegreen,   mark = square*, step = 1cm,very thin]coordinates{(100,0.003)(200,0.012)(300,0.002)(400,0.005)(500,0.006)(600,0.002)(700,0.008)};

\end{axis}
\end{tikzpicture}

\begin{tikzpicture}[scale = 0.9]
\begin{axis}[xmin = 100, xmax = 500, ymin = 0, ymax = 1, xlabel = {Sample Size}, ylabel = {Estimates}, title = {\bf Example 6 (a)}]
\addplot[color = red,   mark = *, step = 1cm,very thin]coordinates{(100,0.153)(200,0.364)(300,0.59)(400,0.762)(500,0.89)};

\addplot[color = blue,   mark = diamond*, step = 1cm,very thin]coordinates{(100,0.057)(200,0.043)(300,0.059)(400,0.055)(500,0.059)};

\addplot[color = black,   mark = star, step = 1cm,very thin]coordinates{(100,0.176)(200,0.344)(300,0.582)(400,0.752)(500,0.883)};

\addplot[color = applegreen,   mark = square*, step = 1cm,very thin]coordinates{(100,0.046)(200,0.056)(300,0.06)(400,0.052)(500,0.058)};


\end{axis}
\end{tikzpicture}    
\begin{tikzpicture}[scale = 0.9]
\begin{axis}[xmin = 100, xmax = 500, ymin = 0, ymax = 1, xlabel = {Sample Size}, ylabel = {Estimates}, title = {\bf Example 6 (b)}]
\addplot[color = red,   mark = *, step = 1cm,very thin]coordinates{(100,0.194)(200,0.377)(300,0.643)(400,0.811)(500,0.896)};

\addplot[color = blue,   mark = diamond*, step = 1cm,very thin]coordinates{(100,0.057)(200,0.061)(300,0.056)(400,0.054)(500,0.056)};

\addplot[color = black,   mark = star, step = 1cm,very thin]coordinates{(100,0.174)(200,0.352)(300,0.544)(400,0.725)(500,0.869)};

\addplot[color = applegreen,   mark = square*, step = 1cm,very thin]coordinates{(100,0.061)(200,0.05)(300,0.043)(400,0.051)(500,0.071)};


\end{axis}
\end{tikzpicture}
\begin{tikzpicture}[scale = 0.9]
\begin{axis}[xmin = 100, xmax = 700, ymin = 0, ymax = 1, xlabel = {Sample Size}, ylabel = {Estimates}, title = {\bf Example 6 (c)}]
\addplot[color = red,   mark = *, step = 1cm,very thin]coordinates{(100,0.096)(200,0.217)(300,0.349)(400,0.494)(500,0.627)(600,0.74)(700,0.86)};

\addplot[color = blue,   mark = diamond*, step = 1cm,very thin]coordinates{(100,0.061)(200,0.058)(300,0.061)(400,0.055)(500,0.049)(600,0.06)(700,0.05)};

\addplot[color = black,   mark = star, step = 1cm,very thin]coordinates{(100,0.109)(200,0.176)(300,0.321)(400,0.467)(500,0.608)(600,0.678)(700,0.832)};

\addplot[color = applegreen,   mark = square*, step = 1cm,very thin]coordinates{(100,0.05)(200,0.06)(300,0.07)(400,0.052)(500,0.063)(600,0.055)(700,0.051)};

\end{axis}
\end{tikzpicture}
\caption{Powers of the proposed test (\textcolor{red}{$\tikzcircle{2pt}$}), OT test (\textcolor{blue}{$\blacklozenge$}), DT test (\textcolor{black}{$\star$}) and PP test (\textcolor{applegreen}{$\blacksquare$}) in Examples 6(a)-(c) for tests based on centered observations (top row) and pairwise differences of the observations (bottom row).}
    \label{fig:power-5}
\end{figure}


}

{ 
\section{Analysis of a benchmark dataset}

For further comparison among different tests, we analyze the ``MAGIC Gamma Telescope" data set available at the  \href{https://archive.ics.uci.edu/dataset/159/magic+gamma+telescope}{UCI machine learning repository}. This data set was generated by a Monte Carlo program called CORSIKA described in \cite{Heck1998}. It is used to simulate registration of high-energy gamma particles in a ground-based atmospheric Cherenkov gamma telescope. 
These observations are classified based on the patterns in the images the particles generate, called the shower images. Based on these shower images the particles are classified as ``primary gamma" and ``hadronic shower". For our analysis, we first divide the entire data set into two parts based on the class labels ``primary gamma" and ``hadronic shower" and call them ``MAGIC-1" and ``MAGIC-2", respectively. Here the observations are 10-dimensional \citep[see][for details]{Heck1998}. While MAGIC-1 contains $12332$ observations, there are $6688$ observations in MAGIC-2. 

Since the tests based on centered observations often fail to satisfy the level property, for valid comparison, here we only consider tests based on pairwise differences of the observations as discussed in Section \ref{sec:unknown-center}. When we apply these tests on the full data, all of them reject the null hypothesis of spherical symmetry in both cases. This gives us an indication that the underlying distributions are non-spherical, and different tests can be compared based on their powers. However, it is not possible to compare among different tests using a single experiment based on the full data set. Therefore, to compare the performances of different tests, we use them on random sub-samples taken from these two data sets, and in each case, we repeat the procedure 1000 times. Like before, the power of a test is computed by the proportion of times it rejects the null hypothesis. The results for different sub-sample sizes are reported in Figure \ref{fig:power-6}. In both of these data sets, the proposed test significantly outperforms all other competitors, especilly for larger sample sizes.  Other tests have slightly higher powers when the sample size is small. But, while the power of the proposed test rises sharply with increasing sample size, those of other tests either remain almost the same or increase at a slower rate. 


\begin{figure}[h]
\centering
    \begin{tikzpicture}
\begin{axis}[xmin = 10, xmax = 200, ymin = 0, ymax = 1, xlabel = {Sample Size}, ylabel = {Estimates}, title = {\bf MAGIC-1}]
\addplot[color = red,   mark = *, step = 1cm,very thin]coordinates{(10,0)(57,0.003)(105,0.207)(152,0.513)(200,0.665)};

\addplot[color = blue,   mark = diamond*, step = 1cm,very thin]coordinates{(10,0.118)(57,0.141)(105,0.131)(152,0.124)(200,0.12)};

\addplot[color = black,   mark = star, step = 1cm,very thin]coordinates{(10,0)(57,0.022)(105,0.068)(152,0.115)(200,0.181)};

\addplot[color = applegreen,   mark = square*, step = 1cm,very thin]coordinates{(10,0.054)(57,0.072)(105,0.065)(152,0.061)(200,0.059)};


\end{axis}
\end{tikzpicture}
\begin{tikzpicture}
\begin{axis}[xmin = 10, xmax = 500, ymin = 0, ymax = 1, xlabel = {Sample Size}, ylabel = {Estimates}, title = {\bf MAGIC-2}]
\addplot[color = red,   mark = *, step = 1cm,very thin]coordinates{(10,0)(132,0.009)(255,0.153)(377,0.523)(500,0.734)};

\addplot[color = blue,   mark = diamond*, step = 1cm,very thin]coordinates{(10,0.099)(132,0.102)(255,0.125)(377,0.129)(500,0.124)};

\addplot[color = black,   mark = star, step = 1cm,very thin]coordinates{(10,0)(132,0.016)(255,0.049)(377,0.095)(500,0.151)};

\addplot[color = applegreen,   mark = square*, step = 1cm,very thin]coordinates{(10,0.047)(132,0.063)(255,0.079)(377,0.078)(500,0.078)};

\end{axis}
\end{tikzpicture}
\caption{Powers of the proposed test (\textcolor{red}{$\tikzcircle{2pt}$}), OT test (\textcolor{blue}{$\blacklozenge$}), DT test (\textcolor{black}{$\star$}) and PP test (\textcolor{applegreen}{$\blacksquare$}) for the Magic Gamma Telescope data set.}
\label{fig:power-6}
\end{figure}

}

{ 

\section{Concluding remarks}

In this article, we have proposed a new measure of spherical asymmetry and a consistent estimator of this measure based on data augmentation. We have also constructed a test of spherical symmetry based on this proposed estimator and studied its large sample behaviour when the dimension may or may not grow with the sample size. Extensive simulation studies have been carried out to amply demonstrate the superiority of our test over some state-of-the-art methods. First, we have constructed our test assuming the center of symmetry under $H_0$ to be known (which is taken as ${\bf 0}$ throughout this article). If it is not known, and one estimates the center from the data, the resulting tests may not have the desired level property. We have demonstrated this and also proposed a simple solution based on the pairwise differences between the observations to overcome this problem. 
However, the resulting tests are based on data splitting, and this may lead to loss of power in some cases. Further research is needed to take care of this issue. 

Following our idea based on data augmentation, tests for other types of symmetry can also be constructed. For instance, one can construct a test for general symmetry (i.e., ${\bf X}\stackrel{D}{=}\epsilon{\bf X}$ where $\epsilon\sim$ Unif$(\{-1,1\})$), angular symmetry (i.e., ${\bf X}/{\|{\bf X}\|} \stackrel{D}{=}\epsilon{\bf X}/{\|{\bf X}\|}$ where $\epsilon\sim$ Unif$(\{-1,1\})$), coordinate wise symmetry (i.e., $(X_1,\ldots,X_d)\stackrel{D}{=}(\epsilon_1X_1,\ldots,\epsilon_dX_d)$ where $\epsilon_1,\ldots,\epsilon_d$ are i.i.d. Unif$(\{-1,1\})$ independent of the $X_i$s) or any $\mathcal{G}$-symmetry (i.e., ${\bf X} \stackrel{D}{=} G{\bf X}$ for all $G\sim$ Unif$(\mathcal{G})$). There also one needs to develop a suitable resampling algorithm for calibration by exploiting the exchangeability of observed and augmented data under $H_0$. 

Throughout this article, we have used the probability measure corresponding to ${\mathcal N}_d({\bf 0}_d,\frac{1}{d}{\bf I}_d)$ as the weighting measure $W$ to construct the
measure of asymmetry $\zeta(\Pr)$. One can introduce a scale parameter in $W$ or use other choices of $W$ (e.g., Laplace or Cauchy) as well.
Depending on the choice of $W$ the results may vary, and a suitable data-driven choice of $W$ may lead to further improvement in the performance of the proposed test.  
We leave these as possible future extensions of our work. 
}

{

In modern machine learning era, nowadays we often deal with huge sample sizes, where the traditional resampling algorithms like permutation or bootstrap become computationally demanding. To take care of this problem, several approximate algorithms  \citep[see, e.g.][]{politis1999subsampling, bickel1997noutofn, bagoflittlebootstrap, sengupta2016sdb} have been proposed as alternatives to usual bootstrap. However, these ideas cannot be directly adopted to develop a computationally efficient alternative to the resampling algorithm used for calibrating the proposed test. 
One needs to develop a new approximation algorithm for this purpose. This may be considered as a future research problem.

}

 



\bibliographystyle{chicago}
\bibliography{refs}

\begin{thebibliography}{}

\bibitem[\protect\citeauthoryear{Ahn, Marron, Muller, and Chi}{Ahn et~al.}{2007}]{ahn2007high}
Ahn, J., J.~Marron, K.~M. Muller, and Y.-Y. Chi (2007).
\newblock The high-dimension, low-sample-size geometric representation holds under mild conditions.
\newblock {\em Biometrika\/}~{\em 94\/}(3), 760--766.

\bibitem[\protect\citeauthoryear{Albisetti, Balabdaoui, and Holzmann}{Albisetti et~al.}{2020}]{albisetti2020}
Albisetti, I., F.~Balabdaoui, and H.~Holzmann (2020).
\newblock Testing for spherical and elliptical symmetry.
\newblock {\em J. Multivariate Anal.\/}~{\em 180}, 104667.

\bibitem[\protect\citeauthoryear{Baringhaus}{Baringhaus}{1991}]{baringhaus1991}
Baringhaus, L. (1991).
\newblock Testing for spherical symmetry of a multivariate distribution.
\newblock {\em Ann. Statist.\/}~{\em 19\/}(2), 899--917.

\bibitem[\protect\citeauthoryear{Chaudhuri}{Chaudhuri}{1996}]{chaudhuri1996}
Chaudhuri, P. (1996).
\newblock On a geometric notion of quantiles for multivariate data.
\newblock {\em J. Amer. Statist. Assoc.\/}~{\em 91\/}(434), 862--872.

\bibitem[\protect\citeauthoryear{Chaudhuri and Sengupta}{Chaudhuri and Sengupta}{1993}]{chaudhuri1993sign}
Chaudhuri, P. and D.~Sengupta (1993).
\newblock Sign tests in multidimension: inference based on the geometry of the data cloud.
\newblock {\em J. Amer. Statist. Assoc.\/}~{\em 88\/}(424), 1363--1370.

\bibitem[\protect\citeauthoryear{Chmielewski}{Chmielewski}{1981}]{chmielewski1981elliptically}
Chmielewski, M.~A. (1981).
\newblock Elliptically symmetric distributions: a review and bibliography.
\newblock {\em Internat. Statist. Rev.\/}~{\em 49\/}(1), 67--74.

\bibitem[\protect\citeauthoryear{Diks and Tong}{Diks and Tong}{1999}]{diks1999}
Diks, C. and H.~Tong (1999).
\newblock A test for symmetries of multivariate probability distributions.
\newblock {\em Biometrika\/}~{\em 86\/}(3), 605--614.

\bibitem[\protect\citeauthoryear{Ding}{Ding}{2020}]{ding2020some}
Ding, X. (2020).
\newblock Some sphericity tests for high dimensional data based on ratio of the traces of sample covariance matrices.
\newblock {\em Statist. Probab. Lett.\/}~{\em 156}, 108613.

\bibitem[\protect\citeauthoryear{Dutta, Ghosh, and Chaudhuri}{Dutta et~al.}{2011}]{dutta2011}
Dutta, S., A.~K. Ghosh, and P.~Chaudhuri (2011).
\newblock Some intriguing properties of {T}ukey's half-space depth.
\newblock {\em Bernoulli\/}~{\em 17\/}(4), 1420--1434.

\bibitem[\protect\citeauthoryear{Fang, Kotz, and Ng}{Fang et~al.}{1990}]{fang1990book}
Fang, K.~T., S.~Kotz, and K.~W. Ng (1990).
\newblock {\em Symmetric {M}ultivariate and {R}elated {D}istributions}, Volume~36 of {\em Monographs on Statistics and Applied Probability}.
\newblock Chapman and Hall, London.

\bibitem[\protect\citeauthoryear{Fang, Zhu, and Bentler}{Fang et~al.}{1993}]{fang1993}
Fang, K.~T., L.~X. Zhu, and P.~M. Bentler (1993).
\newblock A necessary test of goodness of fit for sphericity.
\newblock {\em J. Multivariate Anal.\/}~{\em 45\/}(1), 34--55.

\bibitem[\protect\citeauthoryear{Feng and Liu}{Feng and Liu}{2017}]{feng2017high}
Feng, L. and B.~Liu (2017).
\newblock High-dimensional rank tests for sphericity.
\newblock {\em J. Multivariate Anal.\/}~{\em 155}, 217--233.

\bibitem[\protect\citeauthoryear{Fourdrinier, Strawderman, and Wells}{Fourdrinier et~al.}{2018}]{fourdrinier2018shrinkage}
Fourdrinier, D., W.~E. Strawderman, and M.~T. Wells (2018).
\newblock {\em Shrinkage {E}stimation}.
\newblock Springer Series in Statistics. Springer, Cham.

\bibitem[\protect\citeauthoryear{Ghosh and Chaudhuri}{Ghosh and Chaudhuri}{2005}]{ghosh2005maximum}
Ghosh, A.~K. and P.~Chaudhuri (2005).
\newblock On maximum depth and related classifiers.
\newblock {\em Scand. J. Statist.\/}~{\em 32\/}(2), 327--350.

\bibitem[\protect\citeauthoryear{Gretton, Borgwardt, Rasch, Sch{\"o}lkopf, and Smola}{Gretton et~al.}{2012}]{gretton2012kernel}
Gretton, A., K.~M. Borgwardt, M.~J. Rasch, B.~Sch{\"o}lkopf, and A.~Smola (2012).
\newblock A kernel two-sample test.
\newblock {\em J. Mach. Learn. Res.\/}~{\em 13\/}(1), 723--773.

\bibitem[\protect\citeauthoryear{Gretton, Fukumizu, Teo, Song, Sch\"{o}lkopf, and Smola}{Gretton et~al.}{2007}]{gretton2007kernel}
Gretton, A., K.~Fukumizu, C.~Teo, L.~Song, B.~Sch\"{o}lkopf, and A.~Smola (2007).
\newblock A kernel statistical test of independence.
\newblock In {\em Adv. Neural Inf. Process. Syst.}, Volume~20. Curran Associates, Inc.

\bibitem[\protect\citeauthoryear{Gupta and Song}{Gupta and Song}{1997}]{gupta1997lp}
Gupta, A. and D.~Song (1997).
\newblock Lp-norm spherical distribution.
\newblock {\em J. Stat. Plan. Inference\/}~{\em 60\/}(2), 241--260.

\bibitem[\protect\citeauthoryear{Hall, Marron, and Neeman}{Hall et~al.}{2005}]{hall2005geometric}
Hall, P., J.~S. Marron, and A.~Neeman (2005).
\newblock Geometric representation of high dimension, low sample size data.
\newblock {\em J. R. Stat. Soc. Ser. B Stat. Methodol.\/}~{\em 67\/}(3), 427--444.

\bibitem[\protect\citeauthoryear{Heck, Knapp, Capdevielle, Schatz, and Thouw}{Heck et~al.}{1998}]{Heck1998}
Heck, D., J.~Knapp, J.~N. Capdevielle, G.~Schatz, and T.~Thouw (1998).
\newblock {CORSIKA}: A monte carlo code to simulate extensive air showers.
\newblock Technical report.
\newblock 51.02.03; LK 01; Wissenschaftliche Berichte, FZKA-6019.

\bibitem[\protect\citeauthoryear{Henze, Hl\'{a}vka, and Meintanis}{Henze et~al.}{2014}]{henze2014}
Henze, N., Z.~Hl\'{a}vka, and S.~G. Meintanis (2014).
\newblock Testing for spherical symmetry via the empirical characteristic function.
\newblock {\em Statistics\/}~{\em 48\/}(6), 1282--1296.

\bibitem[\protect\citeauthoryear{Huang and Sen}{Huang and Sen}{2023}]{huang2023multivariate}
Huang, Z. and B.~Sen (2023).
\newblock Multivariate symmetry: Distribution-free testing via optimal transport.
\newblock {\em arXiv preprint arXiv:2305.01839\/}.

\bibitem[\protect\citeauthoryear{John}{John}{1972}]{john}
John, S. (1972).
\newblock The distribution of a statistic used for testing sphericity of normal distributions.
\newblock {\em Biometrika\/}~{\em 59\/}(1), 169--173.

\bibitem[\protect\citeauthoryear{Johnstone}{Johnstone}{2001}]{johnstone}
Johnstone, I.~M. (2001).
\newblock On the distribution of the largest eigenvalue in principal components analysis.
\newblock {\em Ann. Statist.\/}~{\em 29\/}(2), 295--327.

\bibitem[\protect\citeauthoryear{J{\"o}rnsten}{J{\"o}rnsten}{2004}]{jornsten2004clustering}
J{\"o}rnsten, R. (2004).
\newblock Clustering and classification based on the l1 data depth.
\newblock {\em J. Multivariate Anal.\/}~{\em 90\/}(1), 67--89.

\bibitem[\protect\citeauthoryear{Jung and Marron}{Jung and Marron}{2009}]{jung2009pca}
Jung, S. and J.~S. Marron (2009).
\newblock {PCA} consistency in high dimension, low sample size context.
\newblock {\em Ann. Statist.\/}~{\em 37\/}(6B), 4104--4130.

\bibitem[\protect\citeauthoryear{Koltchinskii}{Koltchinskii}{1997}]{koltchinskii1997}
Koltchinskii, V.~I. (1997).
\newblock {M}-estimation, convexity and quantiles.
\newblock {\em Ann. Statist.\/}~{\em 25\/}(2), 435 -- 477.

\bibitem[\protect\citeauthoryear{Koltchinskii and Li}{Koltchinskii and Li}{1998}]{kolchinskii1998}
Koltchinskii, V.~I. and L.~Li (1998).
\newblock Testing for spherical symmetry of a multivariate distribution.
\newblock {\em J. Multivariate Anal.\/}~{\em 65\/}(2), 228--244.

\bibitem[\protect\citeauthoryear{Lee}{Lee}{1990}]{lee2019u}
Lee, A.~J. (1990).
\newblock {\em {$U$}-{S}tatistics: {T}heory and {P}ractice}, Volume 110 of {\em Statistics: Textbooks and Monographs}.
\newblock Marcel Dekker, Inc., New York.

\bibitem[\protect\citeauthoryear{Lehmann}{Lehmann}{2012}]{lehmann2012history}
Lehmann, E.~L. (2012).
\newblock On the history and use of some standard statistical models.
\newblock {\em Selected Works of EL Lehmann\/}, 1019--1031.

\bibitem[\protect\citeauthoryear{Liang, Fang, and Hickernell}{Liang et~al.}{2008}]{liang2008}
Liang, J., K.-T. Fang, and F.~J. Hickernell (2008).
\newblock Some necessary uniform tests for spherical symmetry.
\newblock {\em Ann. Inst. Statist. Math.\/}~{\em 60\/}(3), 679--696.

\bibitem[\protect\citeauthoryear{Massart}{Massart}{1990}]{massart1990tight}
Massart, P. (1990).
\newblock The tight constant in the {D}voretzky-{K}iefer-{W}olfowitz inequality.
\newblock {\em Ann. Probab.\/}~{\em 18\/}(3), 1269--1283.

\bibitem[\protect\citeauthoryear{Randles}{Randles}{1989}]{randles1989distribution}
Randles, R.~H. (1989).
\newblock A distribution-free multivariate sign test based on interdirections.
\newblock {\em J. Amer. Statist. Assoc.\/}~{\em 84\/}(408), 1045--1050.

\bibitem[\protect\citeauthoryear{Rousseeuw}{Rousseeuw}{1985}]{rousseeuw1985}
Rousseeuw, P. (1985).
\newblock Multivariate estimation with high breakdown point.
\newblock In {\em Mathematical {S}tatistics and {A}pplications}, pp.\  283--297. Reidel, Dordrecht.

\bibitem[\protect\citeauthoryear{Rousseeuw and Driessen}{Rousseeuw and Driessen}{1999}]{rousseeuw1999}
Rousseeuw, P.~J. and K.~V. Driessen (1999).
\newblock A {F}ast {A}lgorithm for the {M}inimum {C}ovariance {D}eterminant {E}stimator.
\newblock {\em Technometrics\/}~{\em 41\/}(3), 212--223.

\bibitem[\protect\citeauthoryear{Smith}{Smith}{1977}]{smith1977}
Smith, P.~J. (1977).
\newblock A nonparametric test for bivariate circular symmetry based on the empirical {CDF}.
\newblock {\em Comm. Statist.---Theory Methods\/}~{\em 6\/}(3), 209--220.

\bibitem[\protect\citeauthoryear{Sz{\'e}kely and Rizzo}{Sz{\'e}kely and Rizzo}{2023}]{zekely2023book}
Sz{\'e}kely, G.~J. and M.~L. Rizzo (2023).
\newblock {\em The Energy of Data and Distance Correlation}.
\newblock Chapman and Hall/CRC.

\bibitem[\protect\citeauthoryear{Tsybakov}{Tsybakov}{2009}]{tsybakov2009introduction}
Tsybakov, A.~B. (2009).
\newblock {\em Introduction to Nonparametric Estimation}.
\newblock Springer, New York.

\bibitem[\protect\citeauthoryear{van~der Vaart}{van~der Vaart}{1998}]{van2000asymptotic}
van~der Vaart, A.~W. (1998).
\newblock {\em Asymptotic statistics}, Volume~3 of {\em Cambridge Series in Statistical and Probabilistic Mathematics}.
\newblock Cambridge University Press, Cambridge.

\bibitem[\protect\citeauthoryear{Wainwright}{Wainwright}{2019}]{wainwright2019}
Wainwright, M.~J. (2019).
\newblock {\em High-dimensional statistics: {A} non-asymptotic viewpoint}, Volume~48 of {\em Cambridge Series in Statistical and Probabilistic Mathematics}.
\newblock Cambridge University Press, Cambridge.

\bibitem[\protect\citeauthoryear{Zou, Peng, Feng, and Wang}{Zou et~al.}{2014}]{zou2014}
Zou, C., L.~Peng, L.~Feng, and Z.~Wang (2014).
\newblock Multivariate sign-based high-dimensional tests for sphericity.
\newblock {\em Biometrika\/}~{\em 101\/}(1), 229--236.

\end{thebibliography}

\section{Appendix}
\section{Expression of $\zeta({\Pr})$ for Gaussian distributions}
\label{Gaussian-special-case}
In this section, we present a closed-form expression for  $\zeta(\Pr)$ when $\Pr$ is a multivariate Gaussian distribution with mean ${\bf 0}$ and variance-covariance matrix $\sigmat$. But before that, we present some preliminary lemmas.

\setcounter{lemma}{0}
\renewcommand{\thelemma}{\Alph{section}\arabic{lemma}}  

\begin{lemma}
    If ${\bf X}_1\sim N({\bf 0},\sigmat_1)$ and ${\bf X}_2\sim N({\bf 0},\sigmat_2)$ are independent $d$-dimensional random vectors,
    $$\E\Big\{\exp\big(-\frac{1}{2d}\|{\bf X}_1-{\bf X}_2\|^2\big)\Big\} = \Big|\frac{1}{d}(\sigmat_1+\sigmat_2)+{\bf I}_d\Big|^{-1/2},$$
    where $\big| {\bf A}\big|$ denotes the determinant of a matrix ${\bf A}$, and ${\bf I}_d$ is the $d\times d$ identity matrix. 
    \label{aux-1}
\end{lemma}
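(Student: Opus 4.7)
The plan is to reduce the calculation to a single Gaussian integral. Since $\mathbf{X}_1$ and $\mathbf{X}_2$ are independent centered Gaussians, $\mathbf{Y} := \mathbf{X}_1 - \mathbf{X}_2$ follows $N(\mathbf{0}, \boldsymbol{\Sigma})$ with $\boldsymbol{\Sigma} := \boldsymbol{\Sigma}_1 + \boldsymbol{\Sigma}_2$, so the expectation in question equals $\E\{\exp(-\|\mathbf{Y}\|^2/(2d))\}$.

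First I would write this expectation as
$$\E\{\exp(-\|\mathbf{Y}\|^2/(2d))\} = \frac{1}{(2\pi)^{d/2}|\boldsymbol{\Sigma}|^{1/2}} \int_{\R^d} \exp\Big\{-\tfrac{1}{2}\mathbf{y}^\top\big(\boldsymbol{\Sigma}^{-1} + \tfrac{1}{d}\mathbf{I}_d\big)\mathbf{y}\Big\}\, \mathrm d \mathbf{y},$$
assuming first that $\boldsymbol{\Sigma}$ is positive definite. The two quadratic forms in the exponent (the Gaussian density exponent and $-\|\mathbf{y}\|^2/(2d)$) combine into one quadratic form with precision matrix $\boldsymbol{\Sigma}^{-1} + d^{-1}\mathbf{I}_d$, which is also positive definite. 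Recognizing the integrand (up to constants) as the kernel of a centered Gaussian density, the integral evaluates to $(2\pi)^{d/2} |\boldsymbol{\Sigma}^{-1} + d^{-1}\mathbf{I}_d|^{-1/2}$. Substituting back gives
$$\E\{\exp(-\|\mathbf{Y}\|^2/(2d))\} = |\boldsymbol{\Sigma}|^{-1/2} \cdot \big|\boldsymbol{\Sigma}^{-1} + d^{-1}\mathbf{I}_d\big|^{-1/2}.$$
Then I would apply multiplicativity of the determinant, $|\boldsymbol{\Sigma}|\cdot|\boldsymbol{\Sigma}^{-1} + d^{-1}\mathbf{I}_d| = |\mathbf{I}_d + d^{-1}\boldsymbol{\Sigma}|$, to collapse the product into $|\mathbf{I}_d + d^{-1}(\boldsymbol{\Sigma}_1 + \boldsymbol{\Sigma}_2)|^{-1/2}$, which is exactly the target formula.

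The only point needing care is the possibility that $\boldsymbol{\Sigma}_1 + \boldsymbol{\Sigma}_2$ is only positive semidefinite (not strictly positive definite), in which case $\mathbf{Y}$ is supported on a proper subspace and the density argument above does not apply directly. I would handle this by a perturbation argument: replace $\boldsymbol{\Sigma}$ by $\boldsymbol{\Sigma} + \varepsilon \mathbf{I}_d$, prove the identity for the perturbed covariance (where $\boldsymbol{\Sigma} + \varepsilon \mathbf{I}_d$ is strictly positive definite), and then let $\varepsilon \downarrow 0$, using dominated convergence on the left side (the integrand is bounded by $1$) and continuity of the determinant on the right. No step is especially difficult; the identification of the combined quadratic form as a Gaussian kernel is the only substantive computation.
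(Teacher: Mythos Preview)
Your proposal is correct and follows essentially the same route as the paper: reduce to $\mathbf{Y}=\mathbf{X}_1-\mathbf{X}_2\sim N(\mathbf{0},\boldsymbol{\Sigma}_1+\boldsymbol{\Sigma}_2)$, combine the two quadratic exponents into a single Gaussian kernel with precision $(\boldsymbol{\Sigma}_1+\boldsymbol{\Sigma}_2)^{-1}+d^{-1}\mathbf{I}_d$, evaluate the normalizing constant, and collapse the determinants via multiplicativity. Your added perturbation step for the merely positive-semidefinite case is a nice touch that the paper omits (it tacitly assumes invertibility of $\boldsymbol{\Sigma}_1+\boldsymbol{\Sigma}_2$).
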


\begin{proof}[of Lemma \ref{aux-1}]
    Here ${\bf N} = {\bf X}_1-{\bf X}_2$ follows $N( {\bf 0}, \sigmat_1+\sigmat_2)$. So, we have 
    $$\E\Big\{\exp\big(-\frac{1}{2d}\|{\bf N}\|^2\big)\Big\} = \int \frac{1}{(2\pi)^{d/2}\Big|\sigmat_1+\sigmat_2\Big|^{1/2}}\exp\Big(-\frac{1}{2d}\|{\bf u}\|^2-\frac{1}{2}{\bf u}^{\top} (\sigmat_1+\sigmat_2)^{-1} {\bf u}\Big) {\mathrm d}{\bf u}.$$

    Note that the exponent on the right side is the same as that of the density of a normal distribution with mean ${\bf 0}$ and variance-covariance matrix $\big(\frac{1}{d}{\bf I}_d+(\sigmat_1+\sigmat_2)^{-1}\big)^{-1}$. Therefore, we have

    $$\E\Big\{\exp\big(-\frac{1}{2d}\|{\bf N}\|^2\big)\Big\} = \frac{1}{\Big|\sigmat_1+\sigmat_2\Big|^{1/2}\Big|\frac{1}{d}{\bf I}_d+(\sigmat_1+\sigmat_2)^{-1}\Big|^{1/2}} = \Big|\frac{1}{d}(\sigmat_1+\sigmat_2)+ {\bf I}_d\Big|^{-1/2}.$$
    This completes the proof.
\end{proof}

\begin{lemma}
    If ${\bf X}$ follows a $d$-variate distribution with density $f$,  ${\bf U}\sim$ $\mathrm{Unif}(\mathcal{S}^{d-1}$), and they are independent, then $\|{\bf X}\|{\bf U}$ has the density $\int f({\bf H}^{\top}{\bf u})\pi({\bf H})$, where $\pi$ is the Haar measure on the set of all $d\times d$ orthogonal matrices.
    \label{variant-density}
\end{lemma}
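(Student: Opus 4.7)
The plan is to show that $\|\mathbf{X}\|\mathbf{U}$ has the same distribution as $\mathbf{H}\mathbf{X}$, where $\mathbf{H}$ is an orthogonal matrix drawn from the Haar measure $\pi$ independently of $\mathbf{X}$, and then to compute the density of $\mathbf{H}\mathbf{X}$ by a straightforward change of variables.

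First I would verify the distributional identity $\|\mathbf{X}\|\mathbf{U} \stackrel{D}{=} \mathbf{H}\mathbf{X}$. Conditioning on $\|\mathbf{X}\|=r$, the left side is $r\mathbf{U}$, which is uniformly distributed on the sphere of radius $r$ (since $\mathbf{U}$ is independent of $\mathbf{X}$ and uniform on $\mathcal{S}^{d-1}$). For the right side, for any fixed $\mathbf{x}$ with $\|\mathbf{x}\|=r$, the vector $\mathbf{H}\mathbf{x}$ with $\mathbf{H}$ Haar-distributed is uniform on the sphere of radius $r$, by the defining invariance of the Haar measure under left multiplication by orthogonal matrices. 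Averaging over the distribution of $\|\mathbf{X}\|$ on both sides gives the claimed equality in distribution.

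Next I would compute the density of $\mathbf{H}\mathbf{X}$. For any Borel set $A\subset \mathbb{R}^d$, Fubini and the independence of $\mathbf{H}$ and $\mathbf{X}$ give
\[
\Pr\{\mathbf{H}\mathbf{X}\in A\} = \int \int \mathbf{1}_A(\mathbf{H}\mathbf{x})\, f(\mathbf{x})\, d\mathbf{x}\, d\pi(\mathbf{H}).
\]
Making the change of variable $\mathbf{u} = \mathbf{H}\mathbf{x}$, so that $\mathbf{x}=\mathbf{H}^{\top}\mathbf{u}$ with Jacobian determinant $1$ (because $\mathbf{H}$ is orthogonal), yields
\[
\Pr\{\mathbf{H}\mathbf{X}\in A\} = \int \int \mathbf{1}_A(\mathbf{u})\, f(\mathbf{H}^{\top}\mathbf{u})\, d\mathbf{u}\, d\pi(\mathbf{H}) = \int_A \left(\int f(\mathbf{H}^{\top}\mathbf{u})\, d\pi(\mathbf{H})\right) d\mathbf{u}.
\]
Combined with the distributional identity from the first step, this identifies $\int f(\mathbf{H}^{\top}\mathbf{u})\,d\pi(\mathbf{H})$ as the density of $\|\mathbf{X}\|\mathbf{U}$ at $\mathbf{u}$, which is precisely the claim.

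There is no real obstacle here; the only subtle point is justifying the identity $\|\mathbf{X}\|\mathbf{U} \stackrel{D}{=} \mathbf{H}\mathbf{X}$, which rests on the fact that conditional on $\|\mathbf{X}\|=r$ both constructions produce the uniform distribution on the sphere of radius $r$. The interchange of integrals and the change of variables are routine since $f$ is a probability density and $\pi$ is a finite (probability) measure on a compact group.
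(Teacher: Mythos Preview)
Your proof is correct and follows essentially the same route as the paper: both arguments identify the law of $\|\mathbf{X}\|\mathbf{U}$ with that of $\mathbf{H}\mathbf{X}$ for Haar-distributed $\mathbf{H}$ independent of $\mathbf{X}$, and then read off the density via the change of variables $\mathbf{u}=\mathbf{H}\mathbf{x}$ with unit Jacobian. The only cosmetic differences are that the paper integrates against bounded continuous test functions while you use indicators of Borel sets, and you separate the distributional identity as a preliminary step whereas the paper folds it into a single chain of equalities.
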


\begin{proof}[of Lemma \ref{variant-density}]
    Let $\mu$ be the probability measure corresponding to distribution Unif$(\mathcal{S}^{d-1})$ and $g$ be a bounded continuous. Then, we have
    $$\E\{g(\|{\bf X}\|{\bf U})\} = \int \int g(\|{\bf x}\|{\bf u}) f({\bf x}) \mathrm d\mu({\bf u})\mathrm d{\bf x} = \int \int g({\bf H}{\bf x}) f({\bf x}) \mathrm d\pi({\bf H})\mathrm d{\bf x}.$$
    Since for any fixed ${\bf x}$ and ${\bf u}$, there exists a unique orthogonal matrix ${\bf H}$ such that ${\bf H}{\bf x} = \|{\bf x}\|{\bf u}$, the second equality follows by substitution. Now if we substitute ${\bf H}{\bf x}={\bf v}$ in the above integration we get 
    $$\E\{g(\|{\bf X}\|{\bf U})\} = \int  g({\bf v}) \big(\int f({\bf H}^\top{\bf v}) \mathrm d\pi({\bf H})\big) \Big|\big|{\bf H}^\top\big|\Big| \mathrm d{\bf v} = \int  g({\bf v}) \big(\int f({\bf H}^\top{\bf v}) \mathrm d\pi({\bf H})\big) \mathrm d{\bf v}.$$
    Since $g$ is an arbitrary bounded and continuous function, the result follows.
\end{proof}

\begin{prop}
    If ${\bf X}$ follows a $d$-variate normal distribution with mean ${\bf 0}$ and variance-covariance matrix $\sigmat$, then
    \begin{align*}
        \zeta(\Pr) = & ~\Big|\frac{2}{d}\sigmat+{\bf I}_d\Big|^{-1/2} + \int \int \Big|\frac{1}{d}({\bf H}_1\sigmat {\bf H}_1^\top+{\bf H}_2\sigmat{\bf H}_2^\top)+{\bf I}_d\Big|^{-1/2} \hspace{-0.15in}\mathrm d\pi({\bf H}_1) \mathrm d\pi({\bf H}_2)\\
        & - 2 \int \Big|\frac{1}{d}(\sigmat+{\bf H}\sigmat {\bf H}^\top)+{\bf I}_d\Big|^{-1/2} \hspace{-0.15in}\mathrm d\pi({\bf H}),
    \end{align*}
    where $\pi$ is the Haar measure on the set of all orthogonal matrices of order $d\times d$.
    \label{gaussian-zeta}
\end{prop}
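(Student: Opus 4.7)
The plan is to evaluate each of the three terms in the representation of $\zeta(\Pr)$ supplied by Theorem \ref{closed-form-expression}, namely
\[
\zeta(\Pr) = \E\bigl\{e^{-\frac{1}{2d}\|{\bf X}_1-{\bf X}_2\|^2}\bigr\} + \E\bigl\{e^{-\frac{1}{2d}\|{\bf X}_1'-{\bf X}_2'\|^2}\bigr\} - 2\E\bigl\{e^{-\frac{1}{2d}\|{\bf X}_1-{\bf X}_2'\|^2}\bigr\},
\]
by reducing each expectation to an application of Lemma \ref{aux-1} after an appropriate conditioning on random orthogonal matrices.

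The first term is immediate: since ${\bf X}_1,{\bf X}_2\stackrel{\text{iid}}{\sim}N({\bf 0},\sigmat)$, Lemma \ref{aux-1} with $\sigmat_1=\sigmat_2=\sigmat$ yields $|\tfrac{2}{d}\sigmat+{\bf I}_d|^{-1/2}$.

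For the remaining two terms, the key observation I would use is the stochastic representation $\|{\bf X}\|{\bf U}\stackrel{D}{=}{\bf H}{\bf X}$ where ${\bf H}$ is drawn from the Haar measure $\pi$ on the orthogonal group and is independent of ${\bf X}$. This follows from Lemma \ref{variant-density} (the density of $\|{\bf X}\|{\bf U}$ is the Haar-average of the rotated densities of ${\bf X}$), or directly from the fact that for any fixed nonzero ${\bf x}$, ${\bf H}{\bf x}/\|{\bf x}\|$ is uniform on $\mathcal{S}^{d-1}$ when ${\bf H}\sim\pi$. Writing ${\bf X}_i'={\bf H}_i{\bf X}_i$ with ${\bf H}_1,{\bf H}_2$ independent Haar matrices, independent of $({\bf X}_1,{\bf X}_2)$, we have, conditional on $({\bf H}_1,{\bf H}_2)$, that ${\bf H}_1{\bf X}_1\sim N({\bf 0},{\bf H}_1\sigmat{\bf H}_1^\top)$ and ${\bf H}_2{\bf X}_2\sim N({\bf 0},{\bf H}_2\sigmat{\bf H}_2^\top)$ are independent. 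Applying Lemma \ref{aux-1} inside the conditional expectation and then integrating over $\pi\otimes\pi$ yields the second term as the double integral in the statement. For the third term, ${\bf X}_1\sim N({\bf 0},\sigmat)$ and, conditional on ${\bf H}_2$, ${\bf X}_2'={\bf H}_2{\bf X}_2\sim N({\bf 0},{\bf H}_2\sigmat{\bf H}_2^\top)$ independently of ${\bf X}_1$; Lemma \ref{aux-1} gives the conditional expectation as $|\tfrac{1}{d}(\sigmat+{\bf H}_2\sigmat{\bf H}_2^\top)+{\bf I}_d|^{-1/2}$, and integrating over ${\bf H}_2\sim\pi$ (relabelled as ${\bf H}$) gives the third term.

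I do not anticipate any substantial obstacle; the only delicate point is justifying the representation $\|{\bf X}\|{\bf U}\stackrel{D}{=}{\bf H}{\bf X}$ with ${\bf H}$ Haar-distributed and independent of ${\bf X}$, which is exactly the content of Lemma \ref{variant-density} (and can be verified either by the change-of-variable computation used in that lemma's proof, or by observing that the joint distribution of $({\bf H}{\bf X}/\|{\bf X}\|,\|{\bf X}\|)$ is the product of Unif$(\mathcal{S}^{d-1})$ and the law of $\|{\bf X}\|$). Once that representation is in place, the three applications of Lemma \ref{aux-1} and Fubini's theorem (applicable because the integrands are bounded by $1$) assemble the stated closed form.
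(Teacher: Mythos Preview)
Your proposal is correct and follows essentially the same approach as the paper: both proofs decompose $\zeta(\Pr)$ via Theorem~\ref{closed-form-expression}, handle the first term directly by Lemma~\ref{aux-1}, and for the remaining two terms identify the law of ${\bf X}_i'$ with a Haar-randomized rotation of ${\bf X}_i$ (the paper does this via the density formula of Lemma~\ref{variant-density}, you via the equivalent stochastic representation $\|{\bf X}\|{\bf U}\stackrel{D}{=}{\bf H}{\bf X}$), then apply Lemma~\ref{aux-1} conditionally on the orthogonal matrices and integrate against Haar measure. The only cosmetic difference is that the paper writes out the density integrals explicitly whereas you invoke conditioning and Fubini; the substance is identical.
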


\begin{proof}[of Proposition \ref{gaussian-zeta}]

Recall, that our measure can be written as (see Theorem 1)
$$\zeta(\Pr) = \E\big\{\exp\{-\frac{1}{2d}\|{ {\bf X}_1-{\bf X}_2}\|^2\}\big\} + \E\big\{\exp\{-\frac{1}{2d}\|{ {\bf X}_1^\prime-{\bf X}_2^\prime}\|^2\}\big\}-2\E\big\{\exp\{-\frac{1}{2d}\|{ {\bf X}_1-{\bf X}_2^\prime}\|^2\}\big\}. $$
Let us look at the individual terms separately. The first term on the right side is the same as the term in Lemma \ref{aux-1} with ${\bf X}_1$ and ${\bf X}_2$ being i.i.d. Gaussian random variables with mean zero and variance-covariance matrix $\sigmat$. Therefore,
$$\E\big\{\exp\{-\frac{1}{2d}\|{ {\bf X}_1-{\bf X}_2}\|^2\}\big\} = \Big|{\bf I}_d+\frac{2}{d}\sigmat\Big|^{-1/2}.$$

Now note that the second term can be written as
\begin{align*}
    & \E\big\{\exp\{-\frac{1}{2d}\|{ {\bf X}_1^\prime-{\bf X}_2^\prime}\|^2\}\big\}\\
    & = \int \exp\{-\frac{1}{2d}\|{ {\bf x}_1-{\bf x}_2}\|^2\} \frac{1}{(2\pi)^d\Big|\sigmat\Big|} \exp\Big\{-\frac{1}{2}{\bf x}_1^\top ({\bf H}_1\sigmat {\bf H}_1^\top)^{-1}{\bf  x}_1 -\frac{1}{2}{\bf x}_2^\top ({\bf H}_2\sigmat {\bf H}_2^\top)^{-1} {\bf x}_2\Big\}\\
    &\hspace{4.5in}\mathrm d{\bf x}_1\mathrm d{\bf x}_2 \mathrm d\pi({\bf H}_1)\mathrm d\pi({\bf H}_2)\\
    & = \int \frac{\Big|\sigmat\Big|^{-1}\Big|{\bf H}_1\sigmat {\bf H}_1^\top\Big|^{1/2}\Big|{\bf H}_2\sigmat{\bf H}_2^\top\Big|^{1/2}}{\Big|\frac{1}{d}\big({\bf H}_1\sigmat {\bf H}_1^\top+{\bf H}_2\sigmat {\bf H}_2^\top\big)+{\bf I}_d\Big|^{1/2}} \mathrm d\pi({\bf H}_1)\mathrm d\pi({\bf H}_2)\\
    & = \int \Big|\frac{1}{d}\big({\bf H}_1\sigmat {\bf H}_1^\top+{\bf H}_2\sigmat {\bf H}_2^\top\big)+{\bf I}_d\Big|^{-1/2} \mathrm d\pi({\bf H}_1)\mathrm d\pi({\bf H}_2).
\end{align*}

Similarly, one can also show that,
\begin{align*}
    \E\big\{\exp\{-\frac{1}{2d}\|{ {\bf X}_1-{\bf X}_2^\prime}\|^2\}\big\} = \int \Big|\frac{1}{d}\big({\bf H}\sigmat {\bf H}^\top+\sigmat\big)+{\bf I}_d\Big|^{-1/2} \mathrm d\pi({\bf H}).
\end{align*}
This completes the proof.
\end{proof}

\section{Proofs of the results stated in Section 2}

\vspace{0.1in}
\begin{proof}[\bf Proof of Proposition 2.1]
    It is easy to see that if ${\bf X} \sim {\Pr}$ is spherically distributed, then $\varphi_1=\varphi_2$ and hence $\zeta_W(\Pr)=0$. So, let us prove the only if part. Recall that $\zeta_W(\Pr)$ is given by
    $$\zeta_W(\Pr) = \int \big|\varphi_1({\bf t}) - \varphi_{2} ({\bf t})\big|^2 {\mathrm d}W({\bf t}).$$
    Since $W(\cdot)$ is a non-negative measure, the non-negativity of $\zeta_W(\Pr)$ follows from the non-negativity of the integrand. Now $\zeta_W(\Pr) = 0$ implies $\varphi_1({\bf t}) = \varphi_{2} ({\bf t})$ over the set $\{{\bf t}\mid W({\bf t})>0\}$. Since $W(\cdot)$ is equivalent to the Lebesgue measure, this implies $\varphi_1({\bf t})=\varphi_2({\bf t})$ almost everywhere with respect to the Lebesgue measure. {Since, both $\varphi_1$ and $\varphi_2$ are continuous functions, $\zeta_W(\Pr)=0$ implies
    $\varphi_1({\bf t})=\varphi_2({\bf t})$ for all ${\bf t} \in \R^d$.} Hence, ${\bf X}$ and $\|{\bf X}\|{\bf U}$ are identically distributed, i.e., ${\Pr}$ is spherically distributed.   
\end{proof}

\begin{proof}[\bf Proof of Lemma 2.2]
    Note that
    \begin{align}
        \int \exp\big\{i\langle {\bf T}, {\bf X}_1-{\bf X}_2\rangle\big\} \frac{d^{d/2}}{(2\pi)^{d/2}} e^{-{d\|{\bf T}\|^2}/{2}} \mathrm d{\bf T} = \E\Big\{\exp\big\{i\langle {\bf T}, {\bf X}_1-{\bf X}_2\rangle\big\}\Big\},
        \label{characteristic-representation}
    \end{align}
    where ${\bf T}$ follows a $d$-variate Gaussian distribution with mean zero and variance-covariance matrix $d^{-1}{\bf I}_d$. The right side of equation (\ref{characteristic-representation}) is the characteristic function of ${\bf T}$ evaluated at ${\bf X}_1-{\bf X}_2$. Hence, we have the desired result. 
\end{proof}

\begin{proof}[\bf Proof of Theorem 2.1]
    First note that
    $|\varphi_1({\bf t})-\varphi_2({\bf t})|^2 = \big(\varphi_1({\bf t})-\varphi_2({\bf t})\big)\big(\varphi_1(-{\bf t})-\varphi_2(-{\bf t})\big)$.
    So, expanding the characteristic functions in terms of expectations, we get 
    \begin{align}
        |\varphi_1({\bf t})-\varphi_2({\bf t})|^2 = & \E\Big\{\exp\big\{i\langle {\bf t}, {\bf X}_1-{\bf X}_2 \rangle\big\}\Big\}+ \E\Big\{\exp\big\{i\langle {\bf t}, {\bf X}^{'}_1-{\bf X}^{'}_2 \rangle\big\}\Big\} \nonumber \\ 
        &-\E\Big\{\exp\big\{i\langle {\bf t}, {\bf X}_1-{\bf X}^{'}_2 \rangle\big\}\Big\}-\E\Big\{\exp\big\{i\langle {\bf t}, {\bf X}_1^{'}-{\bf X}_2 \rangle\big\}\Big\}.
        \label{eq:try}
    \end{align}
    From Lemma 2, for any ${\bf V}_1$ and ${\bf V}_2$, we have
    $$ \int \E\Big\{\exp\big\{i\langle {\bf t}, {\bf V}_1-{\bf V}_2 \rangle\big\}\Big\} \frac{d^{d/2}}{(2\pi)^{d/2}} e^{-{d\|{\bf t}\|^2}/{2}} {\mathrm d}{\bf t}=
    \E \Big\{\exp\big\{-\frac{1}{2d}\|{\bf V}_1-{\bf V}_2\|^2\big\}\Big\}.$$
    Applying this to all four terms in (\ref{eq:try}) (note that the last two terms are equal), we get the result.
\end{proof}

\begin{proof}[\bf Proof of Theorem 2.2]
    As introduced in Section 2.2 we can write our estimator as
    $$\hat\zeta_n = \frac{2}{n(n-1)}\sum_{1\leq i< j\leq n} g\big(({\bf X}_i,{\bf X}_i^\prime),({\bf X}_j,{\bf X}_j^\prime)\big),$$
    where
    \begin{align*}
        g\big({ ({\bf x}_1,{\bf x}_2),({\bf y}_1,{\bf y}_2)}\big) = &  \exp\bigg\{-\frac{\|{\bf x}_1-{\bf y}_1\|^2}{2d}\bigg\}+\exp\bigg\{-\frac{\|{\bf x}_2-{\bf y}_2\|^2}{2d}\bigg\} \\
        & \hspace{1in}-\exp\bigg\{-\frac{\|{\bf x}_1-{\bf y}_2\|^2}{2d}\bigg\}-\exp\bigg\{-\frac{\|{\bf x}_2-{\bf y}_1\|^2}{2d}\bigg\}.
    \end{align*}
    Now let $\hat\zeta_n^{(i)}$ denote our estimator when the $i^{th}$ observation $({\bf X}_i,{\bf X}_i^\prime)$ is replaced by an independent copy $({\bf Y}_i,{\bf Y}_i^\prime)$ of the same. Note that
    \begin{align*}
         |\hat\zeta_n-\hat\zeta_n^{(i)}|
         & \leq \frac{2}{n(n-1)}\Bigg\{\sum_{j=1}^{i-1} \Big|g\big(({\bf X}_j,{\bf X}_j^\prime),({\bf X}_i,{\bf X}_i^\prime)\big)-g\big(({\bf X}_j,{\bf X}_j^\prime),({\bf Y}_i,{\bf Y}_i^\prime)\big)\Big|\\ & ~~~~~~~~~~~~~~~~~~~+ \sum_{j=i+1}^n \Big|g\big(({\bf X}_i,{\bf X}_i^\prime),({\bf X}_j,{\bf X}_j^\prime)\big)-g\big(({\bf Y}_i,{\bf Y}_i^\prime),({\bf X}_j,{\bf X}_j^\prime)\big)\Big|\Bigg\} \\
    \end{align*}
    Since $|g(\cdot,\cdot)|\leq 2$, this implies 
        $|\hat\zeta_n-\hat\zeta_n^{(i)}| \leq \frac{8(n-1)}{n(n-1)} \leq \frac{8}{n}$. So, applying bounded difference inequality \citep[see page 37 in][]{wainwright2019} 
    \begin{align*}
        \P\{\big|\hat\zeta_n - \zeta(\Pr)\big|> \epsilon\} \leq \exp\Big\{-\frac{2\epsilon^2}{\sum_{i=1}^n \frac{64}{n^2}}\Big\} = \exp\Big\{-\frac{n\epsilon^2}{32}\Big\}.
    \end{align*}
    This completes the proof.
\end{proof}

\begin{proof}[\bf Proof of Theorem 2.3]
    Note that our estimator $\hat\zeta_n$ is a U-statistic with the kernel
       \begin{align*}
        g\big({ ({\bf x}_1,{\bf x}_2),({\bf y}_1,{\bf y}_2)}\big) = &  \exp\bigg\{-\frac{\|{\bf x}_1-{\bf y}_1\|^2}{2d}\bigg\}+\exp\bigg\{-\frac{\|{\bf x}_2-{\bf y}_2\|^2}{2d}\bigg\} \\
        & \hspace{1in}-\exp\bigg\{-\frac{\|{\bf x}_1-{\bf y}_2\|^2}{2d}\bigg\}-\exp\bigg\{-\frac{\|{\bf x}_2-{\bf y}_1\|^2}{2d}\bigg\}.
    \end{align*}
    of degree 2. The first order Hoeffding projection of $g(.,.)$ is 
    \begin{align}
      g_1\big(({\bf x}_1,{\bf x}_2)\big) = \E\Big\{K({\bf x}_1,{\bf X}_1)\Big\}+\E\Big\{K({\bf x}_2,{\bf X}_1^\prime)\Big\} -\E\Big\{K({\bf x}_1,{\bf X}_1^\prime)\Big\}-\E\Big\{K({\bf x}_2,{\bf X}_1)\Big\}
      \label{first-projection}
    \end{align}
    where $K({\bf x},{\bf y})= \exp\{-\|{\bf x}-{\bf y}\|^2/(2d)\}$, ${\bf X}_1 \sim \Pr$ and ${\bf X}_1^\prime = \|{\bf X}_1\|{\bf U}_1$ for ${\bf U}_1 \sim $  Unif$(\mathcal{S}^{d-1})$ independent of ${\bf X}_1$. Under $H_0$, ${\bf X}_1$ and ${\bf X}_1^\prime$ are identically distributed and hence $g_1\big(({\bf x}_1,{\bf x}_2)\big) = 0$. Therefore, using Theorem 1 from \cite{lee2019u} p.79, we get that $n\hat\zeta_n$ converges in distribution to $\sum_{i = 1}^\infty \lambda_i (Z_i^2-1)$ where $\{Z_i\}$ are independent standard normal random variables and $\{\lambda_i\}$ are the eigenvalues of the integral equation 
    $$\int g\big(({\bf x}_1,{\bf x}_2),({\bf y}_1,{\bf y}_2)\big) f\big(({\bf y}_1,{\bf y}_2)\big) \mathrm dF\big(({\bf y}_1,{\bf y}_2)\big) = \lambda f\big(({\bf x}_1,{\bf x}_2)\big),$$
    where $F$ is the joint distribution of $({\bf X}_1,{\bf X}_1^\prime)$. 
    
\end{proof}

\begin{proof}[\bf Proof of Theorem 2.4]
    Under $H_1$, the function $g_1\big(({\bf x}_1,{\bf x}_2)\big)$ defined in equation \eqref{first-projection} is non-degenerate. Therefore, using Theorem 1 from \cite{lee2019u} p.76, we get the asymptotic normality of $\sqrt{n}(\hat\zeta_n-\zeta(\Pr))$ with the mean zero and the variance $4\sigma_1^2$, where $\sigma_1^2 = \var\big(g_1\big(({\bf X}_1,{\bf X}_1^\prime)\big)\big)$.
\end{proof}

\begin{proof}[\bf Proof of Lemma 2.3]
    Define ${\bf X}_1^\prime,{\bf X}_2^\prime,\ldots, {\bf X}_n^\prime$ as in {Theorem 2.3}. Under $H_0$, $({\bf X}_i,{\bf X}_i^\prime)$ and $({\bf X}_i^\prime, {\bf X}_i)$ are identically distributed for each $i=1,2,\ldots, n$. Therefore, {for any fixed $n$,} the joint distribution of $(\hat\zeta_n,c_{1-\alpha})$ is identical to the joint distribution of $(\hat\zeta_n(\pi),c_{1-\alpha})$, where $\pi$ is an element of $\{0,1\}^n$. Let $\mathcal{D}^\prime = \{({\bf X}_i,{\bf X}_i^\prime)\mid i=1,2,\ldots, n\}$ denote the augmented data. Then,
    $$\P\{\hat\zeta_n>c_{1-\alpha}\} = \P\{\hat\zeta_n(\pi)>c_{1-\alpha}\} = \E\big\{\P\{\hat\zeta_n(\pi)>c_{1-\alpha}\mid \mathcal{D}^\prime\}\big\}\leq \alpha.$$
    The last inequality follows from the definition of $c_{1-\alpha}$ (given in the { Resampling Algorithm A-C}).
\end{proof}

\begin{proof}[\bf Proof of Lemma 2.4]
    Here ${\bf X}_1,{\bf X}_2,\ldots, {\bf X}_n$ are independent copies of ${\bf X}\sim \Pr$. Define ${\bf X}_1^\prime,{\bf X}_2^\prime,\ldots, {\bf X}_n^\prime$ as in {Theorem 2.3}, and $\mathcal{D}^\prime$ as in the proof of {Lemma 2.3}. Let $\pi=(\pi(1),\pi(2),\ldots,\pi(n))$ be uniformly distributed on the set $\{0,1\}^n$. For $i=1,2,\ldots,n$, define ${\bf Y}_i = \pi(i) {\bf X}_i + (1-\pi(i)) {\bf X}_i^\prime$ and ${\bf Y}_i^\prime = (1-\pi(i)) {\bf X}_i + \pi(i) {\bf X}_i^\prime$. Since $g\big(({\bf Y}_i,{\bf Y}_i^\prime),({\bf Y}_i,{\bf Y}_i^\prime)\big)\ge 0$ for all $i=1,2,\ldots,n$, we have
    \begin{align*}
        n(n-1)\hat\zeta_n(\pi)  = \sum_{1\leq i\not=j\leq n} g\big(({\bf Y}_i,{\bf Y}_i^\prime),({\bf Y}_j,{\bf Y}_j^\prime)\big) \leq \sum_{1\leq i,j\leq n} g\big(({\bf Y}_i,{\bf Y}_i^\prime),({\bf Y}_j,{\bf Y}_j^\prime)\big) = n^2 \zeta(F_n),
    \end{align*}
    where $F_n$ denotes the empirical probability distribution of $({\bf Y}_1,{\bf Y}_1^\prime),({\bf Y}_2,{\bf Y}_2^\prime),\ldots, ({\bf Y}_n,{\bf Y}_n^\prime)$. Since, $\zeta(F_n)$ is a non-negative random variable, using Markov inequality, for any $\epsilon>0$, we have
    $$\P\big\{\hat\zeta_n(\pi)>\epsilon\mid \mathcal{D}^\prime\big\} \leq \P\big\{n^2\zeta(F_n)>n(n-1)\epsilon\mid \mathcal{D}^\prime\big\} \leq \frac{n^2}{n(n-1)\epsilon} \E\big\{\zeta(F_n)\mid \mathcal{D}^\prime\big\}.$$
    Now, taking $\epsilon=  \frac{n}{(n-1)\alpha}\E\big\{\zeta(F_n)\mid \mathcal{D}^\prime\big\}$, we get
    $\P\big\{\hat\zeta_n(\pi)>\epsilon\mid \mathcal{D}^\prime\big\}\leq \alpha.$ Therefore, from the definition of $c_{1-\alpha}$ (see {Resampling Algorithm A-C}), we have
    $$c_{1-\alpha} \leq \frac{n}{(n-1)\alpha}\E\big\{\zeta(F_n)\mid \mathcal{D}^\prime\big\}.$$
    Also, note that
    $$\E\{\zeta(F_n)\mid \mathcal{D}^\prime\} = \frac{1}{n^2}\left[\sum_{1\leq i\not=j\leq n} \E\Big\{g\big(({\bf Y}_i,{\bf Y}_i^\prime),({\bf Y}_j,{\bf Y}_j^\prime)\big)\mid \mathcal{D}^\prime\Big\} + \sum_{1\leq i\leq n} \E\Big\{g\big(({\bf Y}_i,{\bf Y}_i^\prime),({\bf Y}_i,{\bf Y}_i^\prime)\big)\mid \mathcal{D}^\prime\Big\}\right].$$
    
    \begin{align*}
      & \text{Now, for any $1\leq i\not=j\leq n$,} ~~  \E\Big\{\exp\{-\frac{\|{\bf Y}_i-{\bf Y}_j\|^2}{2d}\}\mid \mathcal{D}^\prime\Big\}\\
        & \hspace{1in}= \frac{1}{4}\sum_{\pi\in\{0,1\}^2} \exp\Big\{-\frac{\|\pi(1){\bf X}_i+(1-\pi(1)){\bf X}_i^\prime-\pi(2){\bf X}_j-(1-\pi(2)){\bf X}_j^\prime\|^2}{2d}\Big\} =\delta_n(i,j) \text{ (say)}.
    \end{align*}
    Similarly, one can also show that 
    \begin{align*}
        \E\Big\{\exp\{-\frac{\|{\bf Y}_i^\prime-{\bf Y}_j^\prime\|^2}{2d}\}\mid \mathcal{D}^\prime\Big\} = & ~\E\Big\{\exp\{-\frac{\|{\bf Y}_i-{\bf Y}_j^\prime\|^2}{2d}\}\mid \mathcal{D}^\prime\Big\}\\
        =&~\E\Big\{\exp\{-\frac{\|{\bf Y}_i^\prime-{\bf Y}_j\|^2}{2d}\}\mid \mathcal{D}^\prime\Big\} = \delta_n(i,j).
    \end{align*}
    Hence,
    $\E\Big\{g\big(({\bf Y}_i,{\bf Y}_i^\prime),({\bf Y}_j,{\bf Y}_j^\prime)\big)\mid \mathcal{D}^\prime\Big\} = \delta_n(i,j)+\delta_n(i,j)-\delta_n(i,j)-\delta_n(i,j) = 0.$ 
    Similarly, one can also show that for any $1\leq i\leq n$,
    \begin{align*}
        & \E\Big\{g\big(({\bf Y}_i,{\bf Y}_i^\prime),({\bf Y}_i,{\bf Y}_i^\prime)\big)\mid \mathcal{D}^\prime\Big\}\\
        & = 2\Big(1-\frac{1}{2}\sum_{\pi\in\{0,1\}} \exp\Big\{-\frac{\|\pi(1){\bf X}_i+(1-\pi(1)){\bf X}_i^\prime-(1-\pi(1)){\bf X}_i-\pi(1){\bf X}_i^\prime\|^2}{2d}\Big\}\Big)\leq 2
    \end{align*}
    Combining these, we get $c_{1-\alpha}\leq 2(\alpha (n-1))^{-1}$. This completes the proof.
\end{proof}

\begin{proof}[\bf Proof of Theorem 2.5]
    Note that $\hat\zeta_n$ is a consistent estimator of $\zeta(\Pr)$ (follows from {Theorem 2.2}), where $\zeta(\Pr)=0$ under $H_0$ and positive under $H_1$ (follows from {Proposition 2.1}). So,  by Lemma 4 under $H_1$, the power of the test ${\P}(\hat\zeta_n>c_{1-\alpha})$ converges to one as $n$ diverges to infinity. 
\end{proof}

\begin{proof}[\bf Proof of Theorem 2.6]
    
Let us define the distribution functions
$F(t) = \frac{1}{2^n}\Big\{\sum_{\pi\in \{0,1\}^n}I[\hat\zeta_n(\pi)\leq t]\Big\}$ and $F_B(t) = \frac{1}{B}\left\{\sum_{i=1}^BI[\hat\zeta_n(\pi_i)\leq t]\right\}$
conditioned on the augmented data $\mathcal{D}^\prime$. Then,
\begin{equation*}
    \begin{split}
        \big|p_{n}-p_{n,B}\big| & = \bigg|\frac{1}{2^n}\bigg\{\sum_{\pi\in \{0,1\}^n}I[\hat\zeta_n(\pi)\geq \hat\zeta_{n}]\bigg\}-\frac{1}{B+1}\bigg\{\sum_{i=1}^BI[\hat\zeta_n(\pi_i)\geq \hat\zeta_{n}]+1\bigg\}\bigg|\\
        & = \bigg|\frac{1}{2^n}\bigg\{\sum_{\pi\in \{0,1\}^n}I[\hat\zeta_n(\pi)< \hat\zeta_{n}]\bigg\}-\frac{1}{B+1}\bigg\{\sum_{i=1}^BI[\hat\zeta_n(\pi_i)< \hat\zeta_{n}]\bigg\}\bigg|\\
        &  = \big|F(\hat\zeta_{n}-)-\frac{B}{B+1}F_B(\hat\zeta_{n}-)\Big|\\
        & \leq \Big|F(\hat\zeta_{n}-)-F_B(\hat\zeta_{n}-)\Big| + \Big|\frac{F_B(\hat\zeta_{n}-)}{B+1}\Big| \leq \sup_{t\in\R}\big|F(t)-F_B(t)\big|+\frac{1}{B+1}
    \end{split}
\end{equation*}

Conditioned on $\mathcal{D}^\prime$, the Dvoretzky-Keifer-Wolfwitz inequality (\cite{massart1990tight}) gives us $\P\{\sup_{t\in\R}|F(t)-F_B(t)|>\epsilon\}\leq 2e^{-2B\epsilon^2}.$ 
\end{proof}

\section{Proofs of the results stated in Section 3}

\begin{proof}[\bf Proof of Lemma 3.1]
    Recall that $\zeta(\Pr)$ can be expressed as 
    $$\zeta(\Pr) = \E\{K({\bf X}_1,{\bf X}_2)\}+\E\{K({\bf X}_1^\prime, {\bf X}_2^\prime)\}-2\E\{K({\bf X}_1,{\bf X}_2^\prime)\},$$
    where ${\bf X}_1,{\bf X}_2$ are independent copies of ${\bf X}{\sim} \Pr$, ${\bf X}_1^\prime = \|{\bf X}_1\|{\bf U}_1, {\bf X}_2^\prime = \|{\bf X}_2\|{\bf U}_2$
    with
    ${\bf U}_1,{\bf U}_2 \stackrel{iid}{\sim} \mathrm{Unif} (\mathcal{S}^{d-1})$ independent of ${\bf X}_1,{\bf X}_2$,  and $K({\bf x},{\bf y}) = \exp\big\{-\frac{1}{2d}\|{\bf x}-{\bf y}\|^2\big\}$. Now if {$F_\delta = (1-\delta) F+ \delta G$ }($0<\delta<1$), then
    \begin{align*}
        & \E\{K({\bf X}_1,{\bf X}_2)\} = \int K({\bf x}_1,{\bf x}_2) {\mathrm d}F_\delta({\bf x}_1) {\mathrm d}F_\delta({\bf x}_2)\\
        & = (1-\delta)^2 \int K({\bf x}_1,{\bf x}_2) {\mathrm d}F({\bf x}_1) {\mathrm d}F({\bf x}_2) + 2\delta (1-\delta) \int K({\bf x}_1,{\bf x}_2) {\mathrm d}G({\bf x}_1) {\mathrm d}F({\bf x}_2)\\
        & \hspace{2in}+\delta^2 \int K({\bf x}_1,{\bf x}_2) {\mathrm d}G({\bf x}_1) {\mathrm d}G({\bf x}_2).
    \end{align*}
    If $\mu$ denotes the uniform distribution on $(\mathcal{S}^{d-1})$, then
    \begin{align*}
        \E\{K({\bf X}_1^\prime,{\bf X}_2^\prime)\} = &\int K(\|{\bf x}_1\|{\bf u}_1,\|{\bf x}_2\|{\bf u}_2) \mathrm dF_\delta({\bf x}_1) \mathrm dF_\delta({\bf x}_2) {\mathrm{d}}\mu({\bf u}_1) {\mathrm{d}}\mu({\bf u}_2)\\
         = & ~(1-\delta)^2 \int K(\|{\bf x}_1\|{\bf u}_1,\|{\bf x}_2\|{\bf u}_2) \mathrm dF({\bf x}_1)\mathrm dF({\bf x}_2) \mathrm d\mu({\bf u}_1)\mathrm d\mu({\bf u}_2) \\ & + \delta^2 \int K(\|{\bf x}_1\|{\bf u}_1,\|{\bf x}_2\|{\bf u}_2)\mathrm dG({\bf x}_1) \mathrm dG({\bf x}_2)\mathrm d\mu({\bf u}_1)\mathrm d\mu({\bf u}_2) \\
        &+ 2\delta (1-\delta) \int K(\|{\bf x}_1\|{\bf u}_1,\|{\bf x}_2\|{\bf u}_2)\mathrm dG({\bf x}_1)\mathrm dF({\bf x}_2)\mathrm d\mu({\bf u}_1)\mathrm d\mu({\bf u}_2)
    \end{align*}
    \begin{align*}
        \mbox{and }& \E\{K({\bf X}_1,{\bf X}_2^\prime)\} = \int K({\bf x}_1,\|{\bf x}_2\|{\bf u}_2) \mathrm dF_\delta({\bf x}_1) \mathrm dF_\delta({\bf x}_2)\mathrm d\mu({\bf u}_2)\\
        & = (1-\delta)^2 \int K({\bf x}_1,\|{\bf x}_2\|{\bf u}_2)\mathrm dF({\bf x}_1)\mathrm dF({\bf x}_2)\mathrm d\mu({\bf u}_2) +  \delta (1-\delta) \int K({\bf x}_1,\|{\bf x}_2\|{\bf u}_2)\mathrm dG({\bf x}_1)\mathrm dF({\bf x}_2)\mathrm d\mu({\bf u}_2) \\
        & ~~~~~~~~+ \delta (1-\delta) \int K({\bf x}_1,\|{\bf x}_2\|{\bf u}_2) \mathrm dF({\bf x}_1)\mathrm dG({\bf x}_2)\mathrm d\mu({\bf u}_2) +\delta^2 \int K({\bf x}_1,\|{\bf x}_2\|{\bf u}_2) \mathrm dG({\bf x}_1) \mathrm dG({\bf x}_2)\mathrm d\mu({\bf u}_2).
    \end{align*}
    Therefore, if {$F_\delta = (1-\delta) F + \delta G$}, then,
    $$\zeta({F_\delta}) = (1-\delta)^2 \zeta(F)+\delta^2 \zeta(G) + 2\delta(1-\delta) \zeta^\prime (G,F)$$
    where 
    $$\zeta^\prime(G,F) = \E\{K({\bf Y}_1,{\bf Y}_2)\}+\E\{K({\bf Y}_1^\prime, {\bf Y}_2^\prime)\}-\E\{K({\bf Y}_1,{\bf Y}_2^\prime)\}-\E\{K({\bf Y}_1^\prime,{\bf Y}_2)\}$$
    where ${\bf Y}_1\sim G$ and ${\bf Y}_2\sim F$ are independent, ${\bf Y}_1^\prime = \|{\bf Y}_1\| {\bf U}_1$ and ${\bf Y}_2^\prime = \|{\bf Y}_2\| {\bf U}_2$ where ${\bf U}_1,{\bf U}_2 \stackrel{iid}{\sim}$Unif$(\mathcal{S}^{d-1})$ independent of ${\bf Y}_1$ and ${\bf Y}_2$. This completes the proof.
\end{proof}

\begin{lemma}
    If ${\bf X}_1,{\bf X}_2,\ldots, {\bf X}_n$ are independent copies of ${\bf X}\sim\Pr$, then
    $$\var(\hat\zeta_n) \leq {n\choose 2}^{-1}\left[4(n-2) \zeta(\Pr) + 4\right],$$
    \label{var-inequality}
\end{lemma}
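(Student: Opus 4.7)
The plan is to apply the classical variance decomposition for a U-statistic of degree $2$ and then bound the two Hoeffding variance components separately, exploiting the RKHS structure of the kernel.

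First, I would recall from equation \eqref{kernel-function} that $\hat\zeta_n$ is the U-statistic of degree $2$ with symmetric kernel $g$. Writing $Z_i = ({\bf X}_i, {\bf X}_i')$, the standard formula (e.g., Lee 2019) gives
$$\var(\hat\zeta_n) = {n \choose 2}^{-1}\bigl[2(n-2)\sigma_1^2 + \sigma_2^2\bigr],$$
where $\sigma_2^2 = \var(g(Z_1, Z_2))$ and $\sigma_1^2 = \var(g_1(Z_1))$ with $g_1(z) = \E[g(z, Z_2)]$ the first-order Hoeffding projection. Since the proof of Theorem \ref{consistency} already established $|g| \leq 2$, the crude bound $\sigma_2^2 \leq \E[g^2] \leq 4$ handles the second-order term.

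The main step is to show $\sigma_1^2 \leq 2\zeta(\Pr)$; a naive bound using $|g_1| \leq 2$ only yields $\sigma_1^2 \leq 4$, which would not vanish under $H_0$ and would give the wrong order. To extract the factor $\zeta(\Pr)$, I would invoke the RKHS representation highlighted in Remark 2. Let $\phi$ be the canonical feature map of the kernel $K({\bf x}, {\bf y}) = \exp(-\|{\bf x}-{\bf y}\|^2/(2d))$ into its RKHS $\mathcal{H}$, and set $h(Z_i) = \phi({\bf X}_i) - \phi({\bf X}_i')$. Then the four-term structure of $g$ in \eqref{kernel-function} collapses to
$$g(Z_1, Z_2) = \langle h(Z_1), h(Z_2)\rangle_{\mathcal{H}}, \qquad \zeta(\Pr) = \|\E h\|_{\mathcal{H}}^2,$$
so $g_1(Z_1) = \langle h(Z_1), \E h\rangle_{\mathcal{H}}$. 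Cauchy--Schwarz yields $g_1(Z_1)^2 \leq \|h(Z_1)\|_{\mathcal{H}}^2 \cdot \zeta(\Pr)$, and
$$\|h(Z_1)\|_{\mathcal{H}}^2 = g(Z_1, Z_1) = 2 - 2K({\bf X}_1, {\bf X}_1') \leq 2.$$
Hence $\sigma_1^2 \leq \E[g_1(Z_1)^2] \leq 2\zeta(\Pr)$.

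Plugging both bounds into the U-statistic variance formula gives
$$\var(\hat\zeta_n) \leq {n \choose 2}^{-1}\bigl[4(n-2)\zeta(\Pr) + 4\bigr],$$
which is the asserted inequality. The only conceptually non-routine step is the RKHS identification that produces $\zeta(\Pr)$ on the right-hand side rather than a constant; the remainder is bookkeeping with the standard U-statistic variance decomposition.
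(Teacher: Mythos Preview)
Your proof is correct and follows essentially the same route as the paper: both use the standard degree-2 U-statistic variance formula, bound $\sigma_2^2\le 4$ via $|g|\le 2$, and obtain $\sigma_1^2\le 2\zeta(\Pr)$ by a Cauchy--Schwarz argument. The only cosmetic difference is that the paper realizes the last step via the explicit Fourier representation $K({\bf x},{\bf y})=\int e^{i\langle{\bf t},{\bf x}-{\bf y}\rangle}\phi_0({\bf t})\,\mathrm d{\bf t}$ and applies Cauchy--Schwarz in $L^2(\phi_0)$, whereas you use the equivalent abstract RKHS feature map; both yield the identical bound $g_1^2\le\big(2-2K({\bf X}_1,{\bf X}_1')\big)\zeta(\Pr)\le 2\zeta(\Pr)$.
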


\begin{proof}[\bf  Proof of Lemma \ref{var-inequality}]
    Recall that $\hat\zeta_n$ is a U-statistics with the kernel
    $$g\big(({\bf x}_1,{\bf x}_1^\prime),({\bf x}_2,{\bf x}_2^\prime)\big) = K({\bf x}_1,{\bf x}_2) + K({\bf x}_1^\prime,{\bf x}_2^\prime) - K({\bf x}_1,{\bf x}_2^\prime) - K({\bf x}_1^\prime,{\bf x}_2),$$
    where $K({\bf x},{\bf y}) = \exp\{-\frac{1}{2d}\|{\bf x}-{\bf y}\|^2\}$. Then, by the theory of U-statistics \citep[see p.12 in ][]{lee2019u}, we have
    $$\var(\hat\zeta_n) = {n\choose 2}^{-1}\left[{2\choose 1}{n-2\choose 1} \var\Big(g_1\big({\bf X}_1,{\bf X}_1^\prime\big)\Big) + {2\choose 2}{n-2\choose 0} \var\Big(g\big(({\bf X}_1,{\bf X}_1^\prime),({\bf X}_2,{\bf X}_2^\prime)\big)\Big)\right]$$
    where,
    \begin{equation*}
    \begin{split}
        & g_1({\bf x}_1,{\bf x}_1^\prime)\\
        & = \E\{\exp(-\frac{1}{2d}\|{\bf x}_1-{\bf X}_2\|^2)\}-\E\{\exp(-\frac{1}{2d}\|{\bf x}_1-{\bf X}_2^\prime\|^2)\}\\
        &~~~~~~~~+\E\{\exp(-\frac{1}{2d}\|{\bf x}_1^\prime-{\bf X}_2^\prime\|^2)\}-\E\{\exp(-\frac{1}{2d}\|{\bf x}_1^\prime-{\bf X}_2\|^2)\}.
    \end{split}
\end{equation*}    
Since, $|g(\cdot,\cdot)|\leq 2$, $\var\Big(g\big(({\bf X}_1,{\bf X}_1^\prime),({\bf X}_2,{\bf X}_2^\prime)\big)\Big)$ is bounded by $4$. Whereas to bound the first term note that
    \begin{align*}
        g_1({\bf x}_1,{\bf x}_1^\prime) & = \E\{\int \exp(i\langle {\bf t}, {\bf x}_1-{\bf X}_2 \rangle) \phi_0({\bf t}) \mathrm d{\bf t}\} - \E\{\int \exp(i\langle {\bf t}, {\bf x}_1-{\bf X}_2^\prime \rangle) \phi_0({\bf t}) \mathrm d{\bf t}\}\\
        &\hspace{1in}-\E\{\int \exp(i\langle {\bf t}, {\bf x}_1^\prime-{\bf X}_2 \rangle) \phi_0({\bf t}) \mathrm d{\bf t}\}+\E\{\int \exp(i\langle {\bf t}, {\bf x}_1^\prime-{\bf X}_2^\prime \rangle) \phi_0({\bf t}) \mathrm d{\bf t}\}\tag*{\text{(where $\phi_0({\bf t})$ denotes the density of $N({\bf 0}, d^{-1}{\bf I}_d)$)}}\\
        & = \int \big(\exp(i\langle {\bf t}, {\bf x}_1 \rangle) - \exp(i\langle {\bf t}, {\bf x}_1^\prime \rangle)\big)\big(\E\{\exp(-i\langle {\bf t}, {\bf X}_2 \rangle)\} - \E\{\exp(-i\langle {\bf t}, {\bf X}_2^\prime \rangle)\}\big) \phi_0({\bf t}) \mathrm d{\bf t}\\
        & = \int (\exp(i\langle {\bf t}, x_1 \rangle) - \exp(i\langle {\bf t}, {\bf x}_1^\prime \rangle))(\varphi_{1}(-{\bf t}) - \varphi_{2}(-{\bf t})) \phi_0({\bf t}) \mathrm d{\bf t}\\
        & = \int (\exp(i\langle {\bf t}, {\bf x}_1 \rangle) - \exp(i\langle {\bf t}, {\bf x}_1^\prime \rangle))\overline{(\varphi_{1}({\bf t}) - \varphi_{2}({\bf t}))} \phi_0({\bf t}) {\mathrm d}{\bf t}.
     \end{align*}
Then using Cauchy-Schwartz inequality we have,
$$g_1^2({\bf x}_1,{\bf x}_1^\prime) \leq \int |\exp(i\langle {\bf t}, {\bf x}_1 \rangle) - \exp(i\langle {\bf t}, {\bf x}_1^\prime \rangle)|^2 \phi_0({\bf t}) {\mathrm d}{\bf t} \times \int |\varphi_{1}({\bf t}) - \varphi_{2}({\bf t})|^2 \phi_0({\bf t}) {\mathrm d}{\bf t} \leq 2 \zeta(\Pr).$$

This gives us,
$$\var(\hat\zeta_n) \leq {n\choose 2}^{-1}\left[4(n-2) \zeta(\Pr) + 4\right].$$
This completes the proof.    
\end{proof}

\begin{proof}[\bf Proof of Theorem 3.1]
Here ${\bf X}_1,{\bf X}_2,\ldots, {\bf X}_n$ be independent copies of ${\bf X}\sim F_\delta= (1-\delta)F + \delta G$, where $\zeta(G)=0$ and $\zeta(F)=\gamma_0>0$. So, we have $\zeta(F_\delta) = (1-\delta)^2 \gamma_0$ (follows from Lemma 5) and
\begin{align*}
    \P\{\hat\zeta_n>c_{1-\alpha}\} = 1 - \P\{\hat\zeta_n\leq c_{1-\alpha}\} \geq 1- \P\{\hat\zeta_n\leq 2((n-1)\alpha)^{-1}\}.
\end{align*}
Here, the last inequality follows from Lemma 4. Now, choose a large $n$ so that $2((n-1)\alpha)^{-1}<\zeta(F_\delta)$. Then we have
\begin{align*}
    \P\{\hat\zeta_n\leq 2 ((n-1)\alpha)^{-1}\} & = \P\{\hat\zeta_n-\zeta(F_\delta)\leq 2 ((n-1)\alpha)^{-1} -\zeta(F_\delta)\}\\
    & \leq \frac{\var(\hat\zeta_n)}{\big(\zeta(F_\delta)-2((n-1)\alpha)^{-1}\big)^2}\tag*{\text{(by Chebyshev's inequality)}}\\
    & \leq \frac{{n\choose 2}^{-1}\left[4(n-2) \zeta(F_\delta) + 4\right]}{\big(\zeta(F_\delta)-2((n-1)\alpha)^{-1}\big)^2}\tag*{\text{(by Lemma \ref{var-inequality})}}\\
    & = \frac{{n\choose 2}^{-1}\left[4(n-2) (1-\delta)^2\gamma_0 + 4\right]}{\big((1-\delta)^2\gamma_0-2((n-1)\alpha)^{-1}\big)^2}. 
\end{align*}

Note that the upper bound in the above inequality does not depend on $G$ and hence,
$$\inf_{G: \zeta(G)=0}\P\{\hat\zeta_n>c_{1-\alpha}\} \geq 1-\frac{{n\choose 2}^{-1}\left[4(n-2) (1-\delta)^2\gamma_0 + 4\right]}{\big((1-\delta)^2\gamma_0-2((n-1)\alpha)^{-1}\big)^2}.$$
Now taking limit as $n$ goes to infinity, $\inf\limits_{G:\zeta(G) = 0} \P\{\hat\zeta_n>c_{1-\alpha}\}$ goes to one. This completes the proof.
\end{proof}

\begin{proof}[\bf Proof of Theorem 3.2]
    We prove this theorem using a standard application of the Neyman-Pearson lemma. Let $Q_1$ and $Q_2$ be the joint distribution of the sample ${\bf X}_1,{\bf X}_2,\ldots, {\bf X}_n$ under the null and alternative hypotheses respectively. Then we can lower bound the minimax risk $R_{n}(\epsilon)$ as follows
    $$R_n(\epsilon)\geq 1-\alpha - d_{TV}(Q_1,Q_2)\geq 1-\alpha-\sqrt{\frac{1}{2}KL(Q_1,Q_2)}.$$
    The first inequality follows using the fact that $\P_{Q_1}\{\phi=1\}\leq \alpha$ and
    $$\P_{Q_1}\{\phi=1\} + \P_{Q_2}\{\phi=0\} = 1-(\P_{Q_1}\{\phi=0\}-\P_{Q_2}\{\phi=0\}) \geq 1-d_{TV}(Q_1,Q_2)$$
    where $d_{TV}$ denotes the total variation distance between $Q_1$ and $Q_2$. The second inequality follows from Pinsker's inequality \citep[see.][]{tsybakov2009introduction}. Suppose $G$ is a spherically symmetric distribution with density $g$ and $F$ is a distribution with density $f$ and $\zeta(F) = \gamma_0$. Assume that $\int (f({\bf u})/g({\bf u})-1)^2 g({\bf u})\mathrm d{\bf u} = \gamma_1<\infty$. Then setting $Q_1 = \prod_{i=1}^n \Big(\big(1-\frac{\delta}{\sqrt{n}}\big)G+\frac{\delta}{\sqrt{n}} F\Big)$ for some $\delta>0$ and $Q_2 = G^n$, we have
    \begin{align*}
        KL(Q_1,Q_2) & = \int \log\prod_{i=1}^n\left\{1+\frac{\delta}{\sqrt{n}}\big(\frac{f({\bf u}_i)}{g({\bf u}_i)}-1\big)\right\} \prod_{i=1}^n d\Big(\big(1-\frac{\delta}{\sqrt{n}}\big)G+\frac{\delta}{\sqrt{n}} F\Big)({\bf u}_i)\\
        & = n \int \log\left\{1+\frac{\delta}{\sqrt{n}}\big(\frac{f({\bf u}_1)}{g({\bf u}_1)}-1\big)\right\} d\Big(\big(1-\frac{\delta}{\sqrt{n}}\big)G+\frac{\delta}{\sqrt{n}} F\Big)({\bf u}_1)\\
        & = n \big(1-\frac{\delta}{\sqrt{n}}\big) \int \log\left\{1+\frac{\delta}{\sqrt{n}}\big(\frac{f({\bf u}_1)}{g({\bf u}_1)}-1\big)\right\} g({\bf u}_1) \mathrm d {\bf u}_1\\
        &~~~~~~~~~~~+n\frac{\delta}{\sqrt{n}} \int \log\left\{1+\frac{\delta}{\sqrt{n}}\big(\frac{f({\bf u}_1)}{g({\bf u}_1)}-1\big)\right\} f({\bf u}_1) \mathrm d{\bf u}_1
    \end{align*}
    Using the inequality $\log(1+y)\leq y$ we get,
    \begin{align*}
        KL(Q_1,Q_2) & \leq n\left[ \frac{\delta}{\sqrt{n}} (1-\frac{\delta}{\sqrt{n}}) \int \big(\frac{f({\bf u})}{g({\bf u})}-1\big) g({\bf u}) d{\bf u} + \frac{\delta^2}{n} \int \big(\frac{f({\bf u})}{g({\bf u})}-1\big) f({\bf u}) \mathrm d{\bf u} \right]\\
        & = \delta^2 \left[\int \frac{f^2({\bf u})}{g({\bf u})} - 1\right] \mathrm d {\bf u}= \delta^2 \int \left(\frac{f({\bf u})}{g({\bf u})}-1\right)^2 g({\bf u}) {\mathrm d}{\bf u} = \delta^2\gamma_1.
    \end{align*}
    Also by Lemma 5 we have,
    $$\zeta\left(\Big(\big(1-\frac{\delta}{\sqrt{n}}\big)G+\frac{\delta}{\sqrt{n}} F\Big)\right) = \frac{\delta^2}{n} \zeta(F) = \frac{\delta^2}{n} \gamma_0. $$
    Now for any $0<\beta<1-\alpha$ if we choose $\delta = \sqrt{2/\gamma_1}(1-\alpha-\beta)$ we get, 
    $$\zeta\left(\Big(\big(1-\frac{\delta}{\sqrt{n}}\big)G+\frac{\delta}{\sqrt{n}} F\Big)\right) = \frac{1}{n} \left(\frac{2\gamma_0(1-\alpha-\beta)^2}{\gamma_1}\right). $$
    Now define, $c(\alpha,\beta) = \big(2\gamma_0(1-\alpha-\beta)^2\big)/\gamma_1$. Then, $\big((1-\frac{\delta}{\sqrt{n}})G+\frac{\delta}{\sqrt{n}} F\big)\in \mathcal{F}(cn^{-1}) = \{F\mid \zeta(F)>cn^{-1}\}$ for all $0<c<c(\alpha,\beta)$. For this choice of alternative, we also have $R_n(cn^{-1})\geq \beta$ for all $0<c<c(\alpha,\beta)$. Since $\beta$ and $c(\alpha,\beta)$ does not depend on $n$, this trivially satisfies the condition $\liminf_{n\to\infty} R_n(cn^{-1}) \geq \beta$ for all $0<c<c(\alpha,\beta)$.
    
\end{proof}

\begin{proof}[\bf Proof of Theorem 3.3]
    Here we want to show that for every $0<\alpha<1$ and $0<\beta<1-\alpha$ there exists a constant $C(\alpha,\beta)>0$ such that
    $$\limsup_{n\to\infty}\sup_{F\in \mathcal{F}(cn^{-1})} \P_{F^n}\{\hat\zeta_n\leq c_{1-\alpha}\} \leq \beta$$
    for all $c>C(\alpha,\beta)$. Now take any $\Pr\in\mathcal{F}(cn^{-1})$ with $c>4/\alpha$ (i.e. $\zeta({\Pr})>4/n\alpha$). Using the fact $c_{1-\alpha}\leq 2((n-1)\alpha)^{-1}$ and Chebyshev's inequality, we have
    \begin{align*}
        \P_{F^n}\{\hat\zeta_n\leq c_{1-\alpha}\} \leq \P_{F^n}\{\hat\zeta_n\leq 2((n-1)\alpha)^{-1}\} & \leq \P_{F^n}\{\zeta(\Pr)-\hat\zeta_n\geq \zeta(\Pr) - 2((n-1)\alpha)^{-1}\}\\
        & \leq \frac{\var\big(\hat\zeta_n\big)}{\big(\zeta(\Pr) - 2((n-1)\alpha)^{-1}\big)^2},
    \end{align*}
    which holds since $\zeta(\Pr)-2((n-1)\alpha)^{-1}>4(n\alpha)^{-1}-2((n-1)\alpha)^{-1}=
    \frac{2n-4}{n(n-1)\alpha}>0$ for all $n\geq 2$. Now,
    \begin{align}
    \label{eq:chebyshev-bound}
        \frac{\var\big(\hat\zeta_n\big)}{\big(\zeta(\Pr) - 2((n-1)\alpha)^{-1}\big)^2} \leq \frac{{n\choose 2}^{-1}\left[4(n-2) \zeta(\Pr) + 4\right]}{\big(\zeta(\Pr)-2((n-1)\alpha)^{-1}\big)^2}~~~~~~ \mbox{(follows from Lemma \ref{var-inequality})}
    \end{align}
    which implies that
    \begin{align*}
        \limsup_{n\to\infty}\sup_{F\in\mathcal{F}(cn^{-1})} \P_{F^n}\{\hat\zeta_n\leq c_{1-\alpha}\} \leq \frac{4c + 4}{\big(c-2/\alpha\big)^2}.
    \end{align*}
    It is easy to see that the upper bound is a monotonically decreasing function of $c$ for $c>4/\alpha$ and it converges to $0$ as $c$ increases. Hence for any $\beta<1-\alpha$, there exists a $r(\alpha,\beta)$ such that the upper bound is smaller than $\beta$ whenever $c>r(\alpha,\beta)$. Now set $C(\alpha,\beta) = \max\{r(\alpha,\beta),4/\alpha\}$. Then for any $c>C(\alpha,\beta)$ the maximum type II error rate of our test is upper bounded by $\beta$. This completes the proof of this theorem.

\end{proof}

\begin{proof}[\bf Proof of Theorem 3.4]
    {If the distribution $F^{(d)}$ is such that $n\zeta(F^{(d)})$ diverges to infinity, then from equation \eqref{eq:chebyshev-bound} we get $\lim\P\{\hat\zeta_n\leq c_{1-\alpha}\} = 0$. Hence, under the above condition, the power of our test converges to one. }
\end{proof}

\begin{proof}[\bf Proof of Proposition 3.1]
    It is easy to see that the likelihood ratio of $F_{1-\frac{\beta}{\sqrt{n}}} = (1-\beta_n/\sqrt{n})G+\beta_n/\sqrt{n}F$ and $G$ is $\Big(1+\frac{\beta_n}{\sqrt{n}}\big(f({\bf u})/g({\bf u})-1\big)\Big)$. Hence if ${\bf X}_1,{\bf X}_2,\ldots,{\bf X}_n$ are i.i.d. observations from $G$, then the log-likelihood ratio is given by,
    $$L_N = \log\Big\{\prod_{i=1}^n\frac{dF_{1-\frac{\beta_n}{\sqrt{n}}}}{dG}({\bf X}_i)\Big\} = \sum_{i=1}^n \log\Big\{\frac{dF_{1-\frac{\beta_n}{\sqrt{n}}}}{dG}({\bf X}_i)\Big\} = \sum_{i=1}^n \log\Big(1+\frac{\beta_n}{\sqrt{n}}\big(f({\bf X}_i)/g({\bf X}_i)-1\big)\Big).$$
    Using the fact that
    $\log(1+y) = y - \frac{y^2}{2}+\frac{1}{2}y^2h(y)$ where $h(\cdot)$ is continuous and $\lim\limits_{y\to0}h(y)=0,$ we get 
    \begin{align*}
        L_N = & ~\sum_{i=1}^n \frac{\beta_n}{\sqrt{n}}\big(f({\bf X}_i)/g({\bf X}_i)-1\big)-\sum_{i=1}^n \frac{\beta_n^2}{2n}\big(f({\bf X}_i)/g({\bf X}_i)-1\big)^2\\
        &~~~~~~~+\sum_{i=1}^n \frac{\beta_n^2}{2n}\big(f({\bf X}_i)/g({\bf X}_i)-1\big)^2h\Big(\frac{\beta_n}{\sqrt{n}}\big(f({\bf X}_i)/g({\bf X}_i)-1\big)\Big).
    \end{align*}

    Under assumption $\int (f({\bf u})/g({\bf u})-1)^2 g({\bf u}) \mathrm d{\bf u}$ is finite and $\beta_n\rightarrow\beta$, as $n$ grows to infinity, we have
    $$\sum_{i=1}^n \frac{\beta_n^2}{n}\big(f({\bf X}_i)/g({\bf X}_i)-1\big)^2\stackrel{a.s.}{\to} \beta^2 \E\Big(\big(f({\bf X}_1)/g({\bf X}_1)-1\big)^2\Big).$$
    Hence we only need to show that 
    $$\sum_{i=1}^n \frac{\beta_n^2}{n}\big(f({\bf X}_i)/g({\bf X}_i)-1\big)^2h\Big(\frac{\beta_n}{\sqrt{n}}\big(f({\bf X}_i)/g({\bf X}_i)-1\big)\Big)$$
    converges to zero in probability. Notice that
    \begin{align*}
        &\sum_{i=1}^n \frac{\beta_n^2}{n}\big(f({\bf X}_i)/g({\bf X}_i)-1\big)^2h\Big(\frac{\beta_n}{\sqrt{n}}\big(f({\bf X}_i)/g({\bf X}_i)-1\big)\Big)\\
        & \leq \max_{1\leq i\leq n}|h\Big(\frac{\beta_n}{\sqrt{n}}\big(f({\bf X}_i)/g({\bf X}_i)-1\big)\Big)|\sum_{i=1}^n \frac{\beta_n^2}{n}\big(f({\bf X}_i)/g({\bf X}_i)-1\big)^2.
    \end{align*}
    Therefore, it suffices to show that $\max_{1\leq i\leq N}|h\Big(\frac{\beta_n}{\sqrt{n}}\big(f({\bf X}_i)/g({\bf X}_i)-1\big)\Big)|$ converges to zero in probability, which follows if $\max_{1\leq i\leq N}|\frac{\beta_n}{\sqrt{n}}\big(f({\bf X}_i)/g({\bf X}_i)-1\big)|$ converges to zero in probability (as $\lim_{y\to 0} h(y) = 0$ and it is continuous). Note that,
    \begin{align*}
        & \P\Big\{\max_{1\leq i\leq n}\big|\frac{1}{\sqrt{n}}\big(f({\bf X}_i)/g({\bf X}_i)-1\big)\big|>\epsilon\Big\}\\
        & \leq\sum_{i=1}^n \P\Big\{\big|\frac{1}{\sqrt{n}}\big(f({\bf X}_i)/g({\bf X}_i)-1\big)\big|>\epsilon\Big\}\\
        & = n \P\Big\{\big|\frac{1}{\sqrt{n}}\big(f({\bf X}_1)/g({\bf X}_1)-1\big)\big|>\epsilon\Big\}\\
        & = n \E\Big\{I\big(\big|\frac{1}{\sqrt{n}}\big(f({\bf X}_1)/g({\bf X}_1)-1\big)\big|>\epsilon\big)\Big\}\\
        & \leq n \E\Big\{\frac{\big(f({\bf X}_1)/g({\bf X}_1)-1\big)^2}{n\epsilon^2}I\big|\frac{1}{\sqrt{n}}\big(f({\bf X}_1)/g({\bf X}_1)-1\big)\big|>\epsilon\Big\}\\
        & \leq \frac{1}{\epsilon^2}\E\Big\{\big(f({\bf X}_1)/g({\bf X}_1)-1\big)^2I\big(\big|\frac{1}{\sqrt{n}}\big(f({\bf X}_1)/g({\bf X}_1)-1\big)\big|>\epsilon\big)\Big\}.
    \end{align*}
    Since $I\big(\big|\frac{1}{\sqrt{n}}\big(f({\bf X}_1)/g({\bf X}_1)-1\big)\big|>\epsilon\big)$ converges to zero in probability, the right-hand side converges to zero by the Dominated Convergence Theorem. Hence, we have
    $$\left|\log\Big\{\prod_{i=1}^n\frac{dF_{1-\beta_n/\sqrt{n}}}{dG}({\bf X}_i)\Big\}-\frac{\beta_n}{\sqrt{n}}\sum_{i=1}^n \Big(f({\bf X}_i)/g({\bf X}_i)-1\Big)+\frac{\beta_n^2}{2}\E\Big\{f({\bf X}_1)/g({\bf X}_1)-1\Big\}^2\right|\to 0,$$
    in probability as $n$ goes to infinity. This completes the proof.
\end{proof}

\begin{proof}[\bf Proof of Theorem 3.5]
    Let ${\bf X}_1,{\bf X}_2,\ldots, {\bf X}_n \stackrel{i.i.d.}{\sim}G$ and ${\bf U}_1,{\bf U}_2,\ldots, {\bf U}_n\stackrel{i.i.d.}{\sim}$ Unif$(\mathcal{S}^{d-1})$ be independent and 
    $\zeta(G)=0$. For $i=1,2,\ldots,n$, define, ${\bf X}_i^\prime = \|{\bf X}_i\|{\bf U}_i$. 
     To prove this theorem we only need to find the limit distribution of $\sqrt{n}\big(\frac{1}{n}\sum_{i=1}^n h({\bf X}_i,{\bf X}_i^\prime) - \E\big\{h({\bf X}_1,{\bf X}_1^\prime)\big\}\big)$ for some square-integrable function $h$ under the contiguous alternative $F_{1-\beta/\sqrt{n}}= \big(1-\frac{\beta_n}{\sqrt{n}}\big) G + \frac{\beta_n}{\sqrt{n}} F$, where $\zeta(F)>0$ and $\beta_n\rightarrow\beta$ as $n\rightarrow\infty$. Using the bivariate central limit theorem, we can say that as $n$ diverges to infinity, the joint distribution of 
    $$\sqrt{n}\big(\frac{1}{n}\sum_{i=1}^n h({\bf X}_i,{\bf X}_i^\prime) - \E\big\{h({\bf X}_1,{\bf X}_1^\prime)\big\}\big) \text{ and } \frac{\beta_n}{\sqrt{n}}\sum_{i=1}^n \big(\frac{f({\bf X}_i)}{g({\bf X}_i)}-1\big)-\frac{\beta_n^2}{2}\E\big\{\frac{f({\bf X}_1)}{g({\bf X}_1)}-1\big\}^2$$
    converges to a bivariate normal distribution with\\
    $$\text{mean } \mu = \begin{pmatrix}0\\ -\frac{\beta^2}{2}\E\big\{\frac{f({\bf X}_1)}{g({\bf X}_1)}-1\big\}^2\end{pmatrix}$$ 
    and variance
    $$\Sigma = 
    \Biggl(
    \begin{tabular}{cc}
    $\var\big( h({\bf X}_1,{\bf X}_1^\prime)\big)$ & $\tau$\\ $\tau$  & $-\frac{\beta^2}{2}\E\big\{\frac{f({\bf X}_1)}{g({\bf X}_1)}-1\big\}^2$
    \end{tabular}
     \Biggr),$$
    where 
    \begin{align*}
    \tau  & = \E\Big\{\big\{h({\bf X}_1,{\bf X}_1^\prime) - \E\{h({\bf X}_1,{\bf X}_1^\prime)\}\big\}\beta\big\{\frac{f({\bf X}_1)}{g({\bf X}_1)}-1\big\}\Big\}\\
    & = \beta\int \big\{h({\bf x},\|{\bf x}\|{\bf u}) - \E\{h({\bf X}_1,{\bf X}_1^\prime)\}\big\}\big(f({\bf x})-g({\bf x})\big) \mathrm d{\bf x} \mathrm d\mu({\bf u}),    
    \end{align*}
    $\mu$ being the distribution Unif$(\mathcal{S}^{d-1})$. Now using Le Cam's third lemma \citep[see.][]{van2000asymptotic}, as $n$ diverges to infinity, under $F_{1-\beta_n/\sqrt{n}}$, we have 
    $$\sqrt{n}\big(\frac{1}{n}\sum_{i=1}^n h({\bf X}_i,{\bf X}_i^\prime) - \E\big\{h({\bf X}_1,{\bf X}_1^\prime)\big\}\big)\stackrel{D}{\to} N\Big(\tau, \var\big(h({\bf X}_1,{\bf X}_1^\prime)\big)\Big).$$
    Now using similar arguments as in Theorem 1 in p.79 from \cite{lee2019u} and contiguity arguments, we get 
    \begin{align*}
        n\hat\zeta_n\stackrel{D}{\to} & \sum_{i=1}^\infty \lambda_i \left(\big(Z_i+\beta~\E_F\{f_i({\bf X}_1,{\bf X}_1^\prime)\}\big)\Big)^2-1\right)
    \end{align*}
    under $F_{1-\beta_n/\sqrt{n}}$, where $\{Z_i\}$ is a sequence of i.i.d. normal random variables. This completes the proof.
\end{proof}

{
\begin{lemma}
    Let $\Xvec_1,\ldots, \Xvec_n$ be independent copies of  $\Xvec \sim \Pr$ and $\Xvec_1^\prime, \ldots, \Xvec_n^\prime$ be their spherically symmetric variants. Let $\mathcal{D}^\prime = \{(\Xvec_i,\Xvec_i^\prime)\}_{1\le i\leq n}$ be the augmented data set. For $i=1,2,\ldots,n$, define $\Yvec_i = \pi_i \Xvec_i + (1-\pi_i)\Xvec_i^\prime$ and $\Yvec_i^\prime = \pi_i \Xvec_i^\prime + (1-\pi_i)\Xvec_i$, where $\pi_1,\ldots, \pi_n\stackrel{iid}{\sim} \textnormal{Bernoulli}(0.5)$. Let $f(\cdot,\cdot)$ be a function such that $f(\xvec, \xvec^\prime) = -f(\xvec^\prime, \xvec)$ and $\E[f(\Yvec_1,\Yvec_1^\prime)^2]$ is finite. Then, the distribution of $n^{-1/2}\sum_{i=1}^n f(\Yvec_i,\Yvec_i^\prime)$ conditioned on $\mathcal{D}^\prime$ converges weakly to a $\mathcal{N}_1(0,\E[f^2(\Yvec_1,\Yvec_1^\prime)])$ in probability as $n$ grows to infinity. 
    \label{conditional-CLT}
\end{lemma}

\begin{proof}[\bf Proof of Lemma \ref{conditional-CLT}]
    Since $\E[f(\Yvec_1, \Yvec_1^\prime)^2]$ is finite, both $\E[f(\Xvec_1, \Xvec_1^\prime)^2]$ and $\E[f(\Xvec_1^\prime, \Xvec_1)^2]$ are also finite. By $f(\xvec,\xvec^\prime) = - f(\xvec^\prime, \xvec)$, we also have $\E[f(\Yvec_1, \Yvec_1^\prime)\mid \mathcal{D}^\prime]$ = 0. Now, note that conditioned on $\mathcal{D}^\prime$, $\{f(\Yvec_j,\Yvec_j^\prime)\}_{1\leq j\leq n}$ form a triangular array of independent random variables. Therefore, we can use arguments similar to the Lindeberg's CLT to prove this result. Let us first define
    \begin{align*}
        S_n^2 & = \sum_{j=1}^n \var[f(\Yvec_j, \Yvec_j^\prime)\big|\mathcal{D}^\prime] = \sum_{j=1}^n \E[f(\Yvec_j, \Yvec_j^\prime)^2\big|\mathcal{D}^\prime]= \sum_{j=1}^n \frac{1}{2} f(\Xvec_j, \Xvec_j^\prime)^2 + \sum_{j = 1}^n \frac{1}{2} f(\Xvec_j^\prime, \Xvec_j)^2.
    \end{align*}
    So, by the strong law of large numbers, we get
    \begin{align*}
        \frac{S_n^2}{n} \stackrel{a.s.}{\rightarrow}\frac{1}{2}\E[f(\Xvec_1, \Xvec_1^\prime)^2] + \frac{1}{2} \E[f(\Xvec_1^\prime, \Xvec_1)^2] = \E[f(\Yvec_1, \Yvec_1^\prime)^2] \mbox{    as } n \rightarrow \infty.
    \end{align*}
     Now, using the inequality 
       $\Big|e^{itx} - \big(1 + itx - \frac{1}{2} t^2 x^2\big)\Big| \leq \min\Big\{\big|tx\big|^2, \big|tx\big|^3\Big\}$,
    we get
    \begin{align*}
        \Big|\E\big\{e^{it\frac{1}{\sqrt{n}}f({\bf Y}_j,{\bf Y}_j^\prime)} \big|\mathcal{D} \big\} - &\big(1-\frac{t^2}{2n} \E\big\{f^2(\Yvec_j\Yvec_j^\prime)\big|\mathcal{D}^\prime\big\}\big)\Big|\\
        & \leq \E\Bigg\{\min\Big\{\big|\frac{t}{\sqrt{n}}f({\bf Y}_j,{\bf Y}_j^\prime)\big|^2, \big|\frac{t}{\sqrt{n}}f({\bf Y}_j,{\bf Y}_j^\prime)\big|^3\Big\} \big|\mathcal{D}^\prime\Bigg\}.
    \end{align*}
    Note that the expectation on the right hand side of the above inequality is finite. Now take any arbitrarily small $\epsilon>0$ and note that
    \begin{align*}
        & \E\Bigg\{\min\Big\{\big|\frac{t}{\sqrt{n}}f({\bf Y}_j,{\bf Y}_j^\prime)\big|^2, \big|\frac{t}{\sqrt{n}}f({\bf Y}_j,{\bf Y}_j^\prime)\big|^3\Big\} \big|\mathcal{D}^\prime\Bigg\}\\
        & = \E\Bigg\{\min\Big\{\big|\frac{t}{\sqrt{n}}f({\bf Y}_j,{\bf Y}_j^\prime)\big|^2, \big|\frac{t}{\sqrt{n}}f({\bf Y}_j,{\bf Y}_j^\prime)\big|^3\Big\} I[|f({\bf Y}_j,{\bf Y}_j^\prime)|<\epsilon S_n] \big|\mathcal{D}^\prime\Bigg\}\\
        & ~~~~~ + \E\Bigg\{\min\Big\{\big|\frac{t}{\sqrt{n}}f({\bf Y}_j,{\bf Y}_j^\prime)\big|^2, \big|\frac{t}{\sqrt{n}}f({\bf Y}_j,{\bf Y}_j^\prime)\big|^3\Big\} I[|f({\bf Y}_j,{\bf Y}_j^\prime)|\geq \epsilon S_n] \big|\mathcal{D}^\prime\Bigg\}\\
        & \leq \frac{\epsilon S_n |t|^3}{n^{3/2}} \E\Big\{ f({\bf Y}_j,{\bf Y}_j^\prime)^2\big|\mathcal{D}^\prime\Big\} \\
        & ~~~~~~~~~~~~ + \frac{t^2}{2n}\Big\{f({\bf X}_j,{\bf X}_j^\prime)^2 I[|f({\bf X}_j,{\bf X}_j^\prime)|\geq \epsilon S_n] + f({\bf X}_j^\prime,{\bf X}_j)^2 I[|f({\bf X}_j^\prime,{\bf X}_j)|\geq \epsilon S_n]\Big\}.
    \end{align*}
    Then, by the assumption $\E[f({\bf Y}_j,{\bf Y}_j^\prime)^2]<\infty$, we get
    \begin{align*}
        & \sum_{j=1}^n \Big|\E[e^{it \frac{1}{\sqrt{n}}f({\bf Y}_j,{\bf Y}_j^\prime)}\big|\mathcal{D}^\prime] - \big(1-\frac{t^2}{2n} \E[f({\bf Y}_j,{\bf Y}_j^\prime)^2\big|\mathcal{D}^\prime]\big)\Big|\\
        & \leq \frac{\epsilon S_n |t|^3}{\sqrt{n}} \Big\{\frac{1}{2n}\sum_{j=1}^n f({\bf X}_j,{\bf X}_j^\prime)^2 + \frac{1}{2n}\sum_{j=1}^n f({\bf X}_j^\prime,{\bf X}_j)^2\Big\}\\
        &~~~~~~ + \frac{t^2}{2} \Big\{\frac{1}{n} \sum_{j=1}^n f({\bf X}_j,{\bf X}_j^\prime)^2 I[|f({\bf X}_j,{\bf X}_j^\prime)|\geq \epsilon S_n] + \frac{1}{n} \sum_{j=1}^n f({\bf X}_j^\prime,{\bf X}_j)^2 I[|f({\bf X}_j^\prime,{\bf X}_j)|\geq \epsilon S_n]\Big\}.
    \end{align*}
    Now, by the strong law of large numbers, we have
    \begin{align*}
        & \frac{\epsilon S_n |t|^3}{\sqrt{n}} \Big\{\frac{1}{2n}\sum_{j=1}^n f({\bf X}_j,{\bf X}_j^\prime)^2 + \frac{1}{2n}\sum_{j=1}^n f({\bf X}_j^\prime,{\bf X}_j)^2\Big\}\\
        & \stackrel{a.s.}{\rightarrow} \epsilon |t|^3 \sqrt{\E[f({\bf Y}_1, {\bf Y}_1)^2]} \Big\{\frac{1}{2} \E[f({\bf X}_1,{\bf X}_1^\prime)^2] + \frac{1}{2} \E[f({\bf X}_1^\prime,{\bf X}_1)^2]\Big\} = \epsilon |t|^3 \E[f({\bf Y}_1, {\bf Y}_1)^2]^{3/2},
    \end{align*}
    which is arbitrarily small (since $\epsilon$ is arbitrarily small). Again, using DCT, we have
    \begin{align*}
        & \E\left[\frac{1}{n} \sum_{j=1}^n f({\bf X}_j,{\bf X}_j^\prime)^2 I[|f({\bf X}_j,{\bf X}_j^\prime)|\geq \epsilon S_n] \right] = \E\left[f({\bf X}_1,{\bf X}_1^\prime)^2 I[|f({\bf X}_1,{\bf X}_1^\prime)|\geq \epsilon S_n] \right] \rightarrow 0
    \end{align*}
    as $n$ grows to infinity. Therefore, $\frac{1}{n} \sum_{j=1}^n f({\bf X}_j,{\bf X}_j^\prime)^2 I[|f({\bf X}_j,{\bf X}_j^\prime)|\geq \epsilon S_n]$ converges to zero in probability as $n$ grows to infinity. Similarly, we can also show that $$\frac{1}{n} \sum_{j=1}^n f({\bf X}_j^\prime,{\bf X}_j)^2 I[|f({\bf X}_j^\prime,{\bf X}_j)|\geq \epsilon S_n]\stackrel{P}{\rightarrow} 0$$ 
    as $n$ grows to infinity. Therefore, by repeated application of the triangle inequality, we get
    \begin{align*}
        & \left|\prod_{j=1}^n \E\Big[e^{it \frac{1}{\sqrt{n}} f({\bf Y}_j^\prime,{\bf Y}_j) }\big|\mathcal{D}^\prime\Big] - \prod_{j=1}^n \big(1-\frac{t^2}{2n} \E\big[f({\bf Y}_j, {\bf Y}_j^\prime)^2\big|\mathcal{D}^\prime\big]\big)\right|\\
        & \leq \sum_{j=1}^n \Big|\E\Big[e^{it \frac{1}{\sqrt{n}} f({\bf Y}_j^\prime,{\bf Y}_j) }\big|\mathcal{D}^\prime\Big] - \big(1-\frac{t^2}{2n} \E\big[f({\bf Y}_j, {\bf Y}_j^\prime)^2\big|\mathcal{D}^\prime\big]\big)\Big| \stackrel{P}{\rightarrow} 0,
    \end{align*}
    as $n$ grows to infinity. Before proceeding further let us note that for any $\delta>0$, 
    \begin{align*}
        &\Big|\frac{1}{n} \E\big[f({\bf Y}_j,{\bf Y}_j^\prime)^2\big|\mathcal{D}^\prime\big]\Big| \\
        & \leq \delta^2 +\frac{1}{n}\E\big[f({\bf Y}_j, {\bf Y}_j^\prime)^2 I[|f({\bf Y}_j, {\bf Y}_j^\prime)|\geq \delta\sqrt{n}]\big|\mathcal{D}^\prime\big]\\
        & = \delta^2 + \frac{1}{2n}f({\bf X}_j, {\bf X}_j^\prime)^2 I[|f({\bf X}_j, {\bf X}_j^\prime)|\geq \delta\sqrt{n}]+\frac{1}{2n}f({\bf X}_j^\prime, {\bf X}_j)^2 I[|f({\bf X}_j^\prime, {\bf X}_j)|\geq \delta\sqrt{n}].
    \end{align*}
    \begin{align*}
        \text{So,  } &
    \max_{1\leq j\leq n} \Big|\frac{1}{n} \E\big[f({\bf Y}_j, {\bf Y}_j^\prime)^2\big|\mathcal{D}^\prime\big]\Big|\\
        & \leq \delta^2 + \frac{1}{2n}\sum_{j=1}^n f({\bf X}_j, {\bf X}_j^\prime)^2 I[|f({\bf X}_j, {\bf X}_j^\prime)|\geq \delta\sqrt{n}] + \frac{1}{2n}\sum_{j=1}^n f({\bf X}_j^\prime, {\bf X}_j)^2 I[|f({\bf X}_j^\prime, {\bf X}_j)|\geq \delta\sqrt{n}].
    \end{align*}
    Since, $\delta^2$ is of arbitrary and the other term is of order $o_p(1)$, $\max_{1\leq j\leq n} \Big|\frac{1}{n} \E\big[f({\bf Y}_j, {\bf Y}_j^\prime)^2\big|\mathcal{D}^\prime\big]\Big|\stackrel{P}{\rightarrow} 0$ as $n \rightarrow \infty$. Also note that
    \begin{align*}
        & \left|\prod_{j=1}^n \exp\Big\{-\frac{t^2}{2n} \big(\frac{1}{2} f({\bf X}_j, {\bf X}_j^\prime)^2 + \frac{1}{2} f({\bf X}_j^\prime, {\bf X}_j)^2\big)\Big\} - \prod_{j=1}^n \Big\{1-\frac{t^2}{2n} \big(\frac{1}{2} f({\bf X}_j, {\bf X}_j^\prime)^2 + \frac{1}{2} f({\bf X}_j^\prime, {\bf X}_j)^2\big)\Big\}\right|\\
        & \leq \sum_{j=1}^n \left|\exp\Big\{-\frac{t^2}{2n} \big(\frac{1}{2} f({\bf X}_j, {\bf X}_j^\prime)^2 + \frac{1}{2} f({\bf X}_j^\prime, {\bf X}_j)^2\big)\Big\} - \Big\{1-\frac{t^2}{2n} \big(\frac{1}{2} f({\bf X}_j, {\bf X}_j^\prime)^2 + \frac{1}{2} f({\bf X}_j^\prime, {\bf X}_j)^2\big)\Big\}\right|\\
        & \leq \sum_{j=1}^n \left|\frac{t^2}{4n} \big( f({\bf X}_j, {\bf X}_j^\prime)^2 + f({\bf X}_j^\prime, {\bf X}_j)^2\big)\right|^2 \exp\Big\{\frac{t^2}{4n} \big( f({\bf X}_j, {\bf X}_j^\prime)^2 + f({\bf X}_j^\prime, {\bf X}_j)^2\big)\Big\}\\
        &\leq \exp\Big\{\max_{1\leq j\leq n}\frac{t^2}{4n} \big( f({\bf X}_j, {\bf X}_j^\prime)^2 + f({\bf X}_j^\prime, {\bf X}_j)^2\big)\Big\} \max_{1\leq j\leq n}\frac{t^2}{4n} \big( f({\bf X}_j, {\bf X}_j^\prime)^2 + f({\bf X}_j^\prime, {\bf X}_j)^2\big)\\
        & ~~~~~~~~~~~~~~~~~~~~~\sum_{j=1}^n \frac{t^2}{4n} \big( f({\bf X}_j, {\bf X}_j^\prime)^2 + f({\bf X}_j^\prime, {\bf X}_j)^2\big).
    \end{align*}
    Clearly, this converges to $0$ as $n$ grows to infinity. Therefore,
    \begin{align*}
        \prod_{j=1}^n \E\Big[e^{it \frac{1}{\sqrt{n}} f({\bf Y}_j^\prime,{\bf Y}_j) }\big|\mathcal{D}^\prime\Big] & = \exp\Big\{-\frac{t^2}{2} \frac{1}{n}\sum_{j=1}^n \big(f({\bf X}_j,{\bf X}_j^\prime)^2+f({\bf X}_j^\prime,{\bf X}_j)^2\big)\Big\} + o_P(1)\\
        & = \exp\Big\{-\frac{t^2}{2} \E[f({\bf Y}_1,{\bf Y}_1^\prime)^2]\Big\} + o_P(1)
    \end{align*}
    This completes the proof. 
\end{proof}

}

{
\begin{proof}[\bf Proof of Theorem 3.6]
    Let ${\bf X}_1,{\bf X}_2,\ldots, {\bf X}_n \stackrel{iid}{\sim}F$ where $\zeta(F) = 0$ and ${\bf U}_1,{\bf U}_2,\ldots, {\bf U}_n\stackrel{iid}{\sim}$ Unif$(\mathcal{S}^{d-1})$ be independent. For $i=1,2,\ldots,n$, define ${\bf X}_i^\prime = \|{\bf X}_i\|{\bf U}_i$. 
   Also, assume that $\pi_1,\pi_2,\ldots,\pi_n\stackrel{iid}{\sim}\textnormal{Bernoulli}(0.5)$. The resampled test statistic $\hat\zeta_n(\pi)$ can be written as
    $$\hat\zeta_n(\pi)=\frac{1}{n(n-1)}\sum_{1\leq i\not=j\leq n} g\big(({\bf Y}_i,{\bf Y}_i^\prime),({\bf Y}_j,{\bf Y}_j^\prime)\big),$$
    where ${\bf Y}_i = \pi_i {\bf X}_i+(1-\pi_i){\bf X}_i^\prime$, ${\bf Y}_i^\prime = (1-\pi_i){\bf X}_i+\pi_i {\bf X}_i^\prime$ and $g(\cdot, \cdot)$ {  as defined in equation (3) in the main article}. Denote, $\mathcal{D}^\prime = \{(\Xvec_i,\Xvec_i^\prime)\}_{1\leq i\leq n}$ as the augmented data set as before. So, conditioned on $\mathcal{D}^\prime$, the randomness within $\hat\zeta_n(\pi)$ comes from $\pi_1,\ldots, \pi_n$ only. Then, we get
    \begin{align*}
        \E\Big[\exp\big\{-\frac{\|\Yvec_1 - \Yvec_2\|^2}{2d}\big\}\big|(\Yvec_1, \Yvec_1^\prime), \mathcal{D}^\prime\Big] = \frac{1}{2} \sum_{\pi\in\{0,1\}} \left[\exp\big\{-\frac{\|\Yvec_1 - \pi \Xvec_2 - (1-\pi) \Xvec_2^\prime\|^2} {2d}\big\} \right].
    \end{align*}
    (Note that here the randomness is due to $\pi_2$ only). Similarly, we can also get
    \begin{align*}
        \E\Big[\exp\big\{-\frac{\|\Yvec_1 - \Yvec_2^\prime\|^2}{2d}\big\}\big| (\Yvec_1, \Yvec_1^\prime),\mathcal{D}^\prime\Big] & = \frac{1}{2} \sum_{\pi\in\{0,1\}} \left[\exp\big\{-\frac{\|\Yvec_1 - \pi \Xvec_2^\prime - (1-\pi) \Xvec_2\|^2} {2d}\big\} \right],\\
        \E\Big[\exp\big\{-\frac{\|\Yvec_1^\prime - \Yvec_2\|^2}{2d}\big\}\big|(\Yvec_1, \Yvec_1^\prime), \mathcal{D}^\prime\Big] & = \frac{1}{2} \sum_{\pi\in\{0,1\}} \left[\exp\big\{-\frac{\|\Yvec_1^\prime - \pi \Xvec_2- (1-\pi) \Xvec_2^\prime\|^2} {2d}\big\} \right],\text{ and}\\
        \E\Big[\exp\big\{-\frac{\|\Yvec_1^\prime - \Yvec_2^\prime\|^2}{2d}\big\}\big| (\Yvec_1, \Yvec_1^\prime),\mathcal{D}^\prime\Big] & = \frac{1}{2} \sum_{\pi\in\{0,1\}} \left[\exp\big\{-\frac{\|\Yvec_1^\prime - \pi \Xvec_2 - (1-\pi) \Xvec_2^\prime\|^2} {2d}\big\} \right].       
    \end{align*}
    Hence, we get $\E[g\big((\Yvec_1,\Yvec_1^\prime),(\Yvec_2,\Yvec_2^\prime)\big)\big|(\Yvec_1, \Yvec_1^\prime),\mathcal{D}^\prime] = 0$. Therefore, the limiting conditional distribution of $n\hat\zeta_n(\pi)$ should be evaluated as in the case of unconditional degenerate U-statistics. Using the eigen decomposition of $g(\cdot,\cdot)$ with respect to  $F$, we get
    $$g\big((\xvec, \xvec^\prime),(\yvec, \yvec^\prime)\big) = \sum_{i=1}^\infty \lambda_i \varphi_i(\xvec, \xvec^\prime) \varphi_i(\yvec,\yvec^\prime),$$
    where $\lambda_i$ s and $\varphi_i$s are as in Theorem 2.3. For any $K\in\mathbb{N}$, define $$g_K\big((\xvec, \xvec^\prime),(\yvec, \yvec^\prime)\big) = \sum_{i=1}^K \lambda_i \varphi_i(\xvec, \xvec^\prime) \varphi_i(\yvec,\yvec^\prime).$$ 
    Let $U_n(f) := \frac{1}{(n-1)} \sum_{1\leq i \not = j\leq n} f\big((\Yvec_i,\Yvec_i^\prime),(\Yvec_j,\Yvec_j^\prime)\big)$. Then,
    \begin{align*}
        & \E\big[\big(U_n(g-g_K)\big)^2\big|\mathcal{D}^\prime\big]\\
        & = \frac{1}{(n-1)^2} \sum_{1\leq i_1\not=\j_1\leq n} \sum_{1\leq i_2\not=\j_2\leq n}\E\left[(g-g_K)\big((\Yvec_{i_1},\Yvec_{i_1}^\prime),(\Yvec_{j_1},\Yvec_{j_1}^\prime)\big)(g-g_K)\big((\Yvec_{i_2},\Yvec_{i_2}^\prime),(\Yvec_{j_2},\Yvec_{j_2}^\prime)\big)\big| \mathcal{D}^\prime\right] \\
        & =  \frac{2}{(n-1)^2} \sum_{1\leq i_1\not= \j_1\leq n} \E\Big[\Big((g-g_K)\big((\Yvec_{i_1},\Yvec_{i_1}^\prime),(\Yvec_{j_1},\Yvec_{j_1}^\prime)\big)\Big)^2\big| \mathcal{D}^\prime\Big]\\ & ~~~~~~~\text{( since the expectation of the cross-product terms are 0)}\\
        & = \frac{2}{(n-1)^2} \sum_{1\leq i_1\not= \j_1\leq n} \frac{1}{4}\sum_{\pi_{i_1}, \pi_{j_1}\in\{0,1\}}\Big((g-g_K)\big((\pi_{i_1}\Xvec_{i_1} + (1-\pi_{i_1})\Xvec_{i_1}^\prime,\pi_{i_1}\Xvec_{i_1}^\prime + (1-\pi_{i_1})\Xvec_{i_1}), \\
        &\hspace{3.5in} (\pi_{j_1}\Xvec_{j_1} + (1-\pi_{j_1})\Xvec_{j_1}^\prime,\pi_{j_1}\Xvec_{j_1}^\prime + (1-\pi_{j_1})\Xvec_{j_1})\big)\Big)^2\\
        & = \frac{1}{2} [\bm \Sigma_I + \bm \Sigma_{II} + \bm \Sigma_{III} + \bm \Sigma_{IV}],   
    \end{align*}
    \begin{align*}
        \text{where   } \bm \Sigma_I & = \frac{1}{(n-1)^2}  \sum_{1\leq i_1\not= \j_1\leq n} \Big((g-g_K)\big((\Xvec_{i_1} ,\Xvec_{i_1}^\prime),(\Xvec_{j_1},\Xvec_{j_1}^\prime\big)\Big)^2,\\
        \bm \Sigma_{II} & = \frac{1}{(n-1)^2}  \sum_{1\leq i_1\not= \j_1\leq n} \big((g-g_K)\big((\Xvec_{i_1}^\prime ,\Xvec_{i_1}),(\Xvec_{j_1},\Xvec_{j_1}^\prime\big)\big)^2,\\
        \bm \Sigma_{III} & = \frac{1}{(n-1)^2}  \sum_{1\leq i_1\not= \j_1\leq n} \big((g-g_K)\big((\Xvec_{i_1} ,\Xvec_{i_1}^\prime),(\Xvec_{j_1}^\prime,\Xvec_{j_1}\big)\big)^2, \textnormal{ and}\\
        \bm \Sigma_{IV} & = \frac{1}{(n-1)^2}  \sum_{1\leq i_1\not= \j_1\leq n} \big((g-g_K)\big((\Xvec_{i_1}^\prime,\Xvec_{i_1}),(\Xvec_{j_1}^\prime,\Xvec_{j_1}\big)\big)^2.
    \end{align*}
    Clearly, by the strong consistency of U-statistics, we get
    $$\E\big[\big(U_n(g-g_K)\big)^2\big|\mathcal{D}^\prime\big] \stackrel{a.s.}{\rightarrow}2 \E\left[\Big((g-g_K)\big((\Yvec_{1} ,\Yvec_{1}^\prime),(\Yvec_{2},\Yvec_{2}^\prime\big)\Big)^2\right]$$
    as $n$ grows to infinity. The limiting quantity converges to zero as $K$ diverges to infinity. This ensures the closeness of the limiting distribution of $U_n(g)$ and $U_n(g_K)$ when $K$ is large.
    
    Now, let us focus on the limiting distribution of $U_n(g_K)$. First, we find the joint limiting distribution of $\frac{1}{\sqrt{n}}\sum_{i=1}^n\Big(\varphi_1\big((\Xvec_i,\Xvec_i^\prime)\big), \ldots, \varphi_K\big((\Xvec_i,\Xvec_i^\prime)\big)\Big)$ conditioned on the augmented data $\mathcal{D}^\prime$. By the Cramer-Wold device, it is enough to focus on the conditional limit distribution of
    $$\frac{1}{\sqrt{n}}\sum_{i=1}^n \left[t_1\varphi_1\big((\Yvec_1,\Yvec_1^\prime)\big) + \ldots + t_K\varphi_K\big((\Yvec_K,\Yvec_K^\prime)\big)\right]$$
    for some real $t_1,\ldots,t_K$. Note that since $g\big(({\bf x},{\bf x}^\prime),({\bf y},{\bf y}^\prime)\big) = -g\big(({\bf x}^\prime,{\bf x}),({\bf y},{\bf y}^\prime)\big)$, we get
    \begin{align}
    \label{eq:anti-symm-phi}
        \varphi_i({\bf x}, {\bf x}^\prime) & = \int g\big(({\bf x},{\bf x}^\prime),({\bf y},{\bf y}^\prime)\big) \varphi_i({\bf y},{\bf y}^\prime) \mathrm d \nu({\bf y},{\bf y}^\prime)\nonumber\\
        & = - \int g\big(({\bf x}^\prime,{\bf x}),({\bf y},{\bf y}^\prime)\big) \varphi_i({\bf y},{\bf y}^\prime) \mathrm d \nu({\bf y},{\bf y}^\prime) = -\varphi_i({\bf x}^\prime,{\bf x}),
    \end{align}
    for $i=1,2,\ldots,K$, where $\nu$ denotes the joint distribution of $(\Xvec_1, \Xvec_1^\prime)$. So, 
    \begin{align*}
        &\E[\varphi_i(\Yvec_1, \Yvec_1^\prime)\big| \mathcal{D}] = \frac{1}{2}\big(\varphi_i(\Xvec_1, \Xvec_1^\prime) + \varphi_i(\Xvec_1^\prime, \Xvec_1)\big) = \frac{1}{2}\big(\varphi_i(\Xvec_1, \Xvec_1^\prime) - \varphi_i(\Xvec_1, \Xvec_1^\prime)\big) = 0, \text{ 
 and}\\
       & \E[\varphi_i(\Yvec_1, \Yvec_1^\prime)^2] = \frac{1}{2}\E\big(\varphi_i(\Xvec_1, \Xvec_1^\prime)^2 + \varphi_i(\Xvec_1^\prime, \Xvec_1)^2\big) = \frac{1}{2}\E\big(\varphi_i(\Xvec_1, \Xvec_1^\prime)^2 +  \varphi_i(\Xvec_1, \Xvec_1^\prime)^2\big) = 1.
    \end{align*} 
    
    Therefore, 
$\sum_{\ell=1}^K t_\ell\varphi_\ell\big((\Yvec_i,\Yvec_i^\prime)\big)$ satisfies the assumptions of Lemma \ref{conditional-CLT}. Hence, using Lemma \ref{conditional-CLT}, we get that conditioned on $\mathcal{D}^\prime$, 
    \begin{align*}
        &\frac{1}{\sqrt{n}}\sum_{i=1}^n \left[t_1\varphi_1\big((\Yvec_i,\Yvec_i^\prime)\big) + \ldots + t_K\varphi_K\big((\Yvec_i,\Yvec_i^\prime)\big)\right]\\ &\hspace{2.0in} \stackrel{D}{\rightarrow} \mathcal{N}_1\big(0, \sum_{\ell=1}^K t_\ell^2\E[\varphi_\ell^2(\Yvec_1, \Yvec_1^\prime)]\big)
         =  \mathcal{N}_1\big(0, \sum_{\ell=1}^K t_\ell^2\big)
    \end{align*}
    in probability as $n$ grows to infinity. By simple application of weak law of large numbers, it is easy to see that conditioned on $\mathcal{D}$, for any $\ell\in \mathbb{N}$, $\frac{1}{n}\sum_{j=1}^n \varphi_\ell(\Yvec_j, \Yvec_j^\prime)^2$ converges in probability to $\E[\varphi_\ell(\Yvec_1, \Yvec_1^\prime)^2] = 1$ as $n$ grows to infinity. Now, by continuous mapping theorem we can conclude that
    $$U_n(g_K) = \sum_{\ell=1}^K \lambda_\ell \Big( \big( \frac{1}{\sqrt{n}} \sum_{j = 1}^n \varphi_\ell(\Yvec_j, \Yvec_j^\prime) \big)^2  - \frac{1}{n} \sum_{j=1}^n \varphi_{\ell}(\Yvec_j, \Yvec_j^\prime)^2\Big)\stackrel{D}{\rightarrow} \sum_{\ell=1}^K \lambda_\ell (Z_\ell ^2 - 1)$$
    as $n$ grows to infinity, where $Z_\ell$s are i.i.d. standard normal random variables. The rest of the arguments can be established using approximation arguments using characteristic functions {  of the random variables $U_n(g_K), U_n(g), \sum_{\ell=1}^K \lambda_\ell (Z_\ell ^2 - 1)$ and $\sum_{\ell=1}^\infty \lambda_\ell (Z_\ell ^2 - 1)$}, as in Lemma 1 from \cite{banerjee2024twosam}. {  We also refer the interested readers to Chapter 12 of \cite{van2000asymptotic} or page 79 of \cite{lee2019u} for similar arguments.}
    

    Now suppose that ${\bf X}_1,{\bf X}_2,\ldots, {\bf X}_n \stackrel{iid}{\sim} F_{1-\beta n^{-1/2}} = (1-\frac{\beta}{\sqrt{n}}) G+ \frac{\beta}{\sqrt{n}} F$, where $\zeta(G) = 0$ and $\zeta(F)>0$. Then to find the limiting distribution of $n\zeta_n(\pi)$, we need to find the limiting distribution of
    $\frac{1}{\sqrt{n}}\sum_{i=1}^n \varphi_\ell\big(({\bf Y}_i,{\bf Y}_i^\prime)\big)$,
    where $\varphi_\ell$s are the solutions of the integral equation
    \begin{align}
        \int g(({\bf x}_1,{\bf x}_1^\prime), ({\bf y}_1,{\bf y}_1^\prime)) \varphi_\ell(({\bf y}_1,{\bf y}_1^\prime)) \mathrm d\nu\big(({\bf y}_1,{\bf y}_1^\prime)\big) = \lambda \varphi_\ell\big(({\bf x}_1,{\bf x}_1^\prime)\big)
        \label{eq:phi-g-relation}
    \end{align}
    where $\nu$ is the joint distribution of $({\bf Y}_1,{\bf Y}_1^\prime)$ when ${\bf X}_1\sim F_{1-\beta n^{-1/2}}$. Now, note that
    \begin{align*}
        &\E\Big\{\varphi_\ell(\Yvec_1,\Yvec_1^\prime)\big|\mathcal{D}^\prime\Big\} = \frac{1}{2} \Big\{\varphi_\ell(\Xvec_1,\Xvec_1^\prime) + \varphi_\ell(\Xvec_1^\prime,\Xvec_1)\Big\} = 0 \text{  (by \eqref{eq:anti-symm-phi})},\\
        &\E\Big\{\varphi_\ell(\Yvec_1,\Yvec_1^\prime)^2\big|\mathcal{D}^\prime\Big\} = \frac{1}{2} \Big\{\varphi_\ell(\Xvec_1,\Xvec_1^\prime)^2 + \varphi_\ell(\Xvec_1^\prime,\Xvec_1)^2\Big\} = \varphi_\ell(\Xvec_1,\Xvec_1^\prime)^2 \text{   and}\\
    &    \E\Big\{\varphi_\ell(\Xvec_1,\Xvec_1^\prime)^2\Big\}  = \Big(1-\frac{\beta}{\sqrt{n}}\Big) \int \varphi_\ell(\xvec,\xvec^\prime)^2 \mathrm d\nu_+ (\xvec, \xvec^\prime) + \frac{\beta}{\sqrt{n}} \int \varphi_\ell(\xvec,\xvec^\prime)^2 \mathrm d\nu_+^\ast (\xvec, \xvec^\prime)\\
        & \hspace{2.0in} \rightarrow \int \varphi_\ell(\xvec,\xvec^\prime)^2 \mathrm d\nu_+ (\xvec, \xvec^\prime), \textnormal{ as $n\rightarrow \infty$,}
    \end{align*}
    where $\nu_+$ and $\nu_+^\ast$ are the joint distribution of $({\bf X}_1,{\bf X}_1^\prime)$ when ${\bf X}_1\sim G$ and ${\bf X}_1\sim F$, respectively. Now, by contiguity of $F_{1-\beta/\sqrt{n}}$ and $G$, we can say that all probability convergence with respect to $G$ is still valid with respect to $F_{1-\beta/\sqrt{n}}$. Then, following similar arguments as in Lemma \ref{conditional-CLT}, under $F_{1-\beta/\sqrt{n}}$, we  get
    \begin{align*}
        \prod_{j=1}^n \E\Big[e^{it \frac{1}{\sqrt{n}} \varphi_\ell({\bf Y}_j^\prime,{\bf Y}_j) }\big|\mathcal{D}^\prime\Big] & = \exp\Big\{-\frac{t^2}{2} \frac{1}{n}\sum_{j=1}^n \big(\varphi_\ell({\bf X}_j,{\bf X}_j^\prime)^2+\varphi_\ell({\bf X}_j^\prime,{\bf X}_j)^2\big)\Big\} + o_P(1)\\
        & = \exp\Big\{-\frac{t^2}{2} \E[\varphi({\bf Y}_1,{\bf Y}_1^\prime)^2]\Big\} + o_P(1).
    \end{align*}
    The probability convergence of $\frac{1}{n}\sum_{j=1}^n \big(\varphi_\ell({\bf X}_j,{\bf X}_j^\prime)^2+\varphi_\ell({\bf X}_j^\prime,{\bf X}_j)^2\big)$ to $\E[\varphi({\bf Y}_1,{\bf Y}_1^\prime)^2]$ also follows by the contiguity of $F_{1-\beta/\sqrt{n}}$ and $G$. Now using similar arguments as before, we get $$n\hat\zeta_n(\pi)\stackrel{D}{\rightarrow}\sum_{i=1}^\infty \lambda_i (Z_i^2-1) $$
    as $n$ grows to infinity, where $\{\lambda_i\}$ is a square-integrable sequence and $\{Z_i\}$ is a sequence of i.i.d. standard normal random variables.  
\end{proof}
}

{
\begin{lemma}
    The function $\psi(x) = \exp\{-x^2\}$ is Lipschitz continuous, or more generally, for any $x,y\in \R$,
    $~|\psi(x) - \psi(y)| \leq K |x-y|,$
    where $K = \sup_{t\in R}|\psi^\prime(t)|$.
    \label{lem:lipschitz-pdf}
\end{lemma}

\begin{proof}
    This follows from a simple application of the mean value theorem.
\end{proof}
}

{

\begin{proof}[\bf Proof of Theorem 5.1]
    First, we use Lemma \ref{lem:lipschitz-pdf} to prove the following two inequalities.  Note that for any $\Xvec_i, \Xvec_j$ and ${\bf U}_j$, we have
    \begin{align}
        & \Big|\exp\Big\{-\frac{1}{2d}\Big\|\Xvec_i - \xvec_1 - \|\Xvec_j - \xvec_1\| {\bf U}_j\Big\|^2\Big\} - \exp\Big\{-\frac{1}{2d}\Big\|\Xvec_i - \xvec_2 - \|\Xvec_j - \xvec_2\| {\bf U}_j\Big\|^2\Big\}\Big|\nonumber\\
        & \leq K \Big|\frac{1}{2d}\Big\|\Xvec_i - \xvec_1 - \|\Xvec_j - \xvec_1\| {\bf U}_j\Big\| - \frac{1}{2d}\Big\|\Xvec_i - \xvec_2 - \|\Xvec_j - \xvec_2\| {\bf U}_j\Big\|\Big|\nonumber\\
        & \leq \frac{K}{2d} \Big\| \Xvec_i - \xvec_1 - \|\Xvec_j - \xvec_1\| {\bf U}_j - \Xvec_i + \xvec_2 + \|\Xvec_j - \xvec_2\| {\bf U}_j \Big\|\nonumber\\
        & \leq \frac{K}{2d} \Big\|\xvec_2 - \xvec_2 + \big(\|\Xvec_j - \xvec_2\| - \|\Xvec_j - \xvec_1\|\big){\bf U}_j \Big\|\nonumber\\
        & \leq \frac{K}{2d} \Big(\|\xvec_2 - \xvec_1\| + \Big|(\|\Xvec_j - \xvec_2\| - \|\Xvec_j - \xvec_1\|\big)\Big| \|{\bf U}_j\| \Big)\nonumber\\
        & \leq  \frac{K}{2d} \Big(\|\xvec_2 - \xvec_1\| + \|\Xvec_j - \xvec_2 - \Xvec_j + \xvec_1\|\Big) = \frac{K}{d} \|\xvec_1- \xvec_2\|.\label{eq:first-conv}
    \end{align}
    Similarly, for any $\Xvec_i, \Xvec_j$ and ${\bf U}_i, {\bf U}_j$, we  have
    \begin{align}
        & \Big|\exp\Big\{-\frac{1}{2d}\Big\|\|\Xvec_i - \xvec_1\|{\bf U}_i - \|\Xvec_j - \xvec_1\| {\bf U}_j\Big\|^2\Big\} - \exp\Big\{-\frac{1}{2d}\Big\|\|\Xvec_i - \xvec_2\|{\bf U}_i - \|\Xvec_j - \xvec_2\| {\bf U}_j\Big\|^2\Big\}\Big|\nonumber\\
        & \leq K \Big|\frac{1}{2d}\Big\|\|\Xvec_i - \xvec_1\|{\bf U}_i - \|\Xvec_j - \xvec_1\| {\bf U}_j\Big\| - \frac{1}{2d}\Big\|\|\Xvec_i - \xvec_2\|{\bf U}_i - \|\Xvec_j - \xvec_2\| {\bf U}_j\Big\|\Big|\nonumber\\
        & \leq \frac{K}{2d} \Big\| \|\Xvec_i - \xvec_1\|{\bf U}_i - \|\Xvec_j - \xvec_1\| {\bf U}_j - \|\Xvec_i - \xvec_2\|{\bf U}_i + \|\Xvec_j - \xvec_2\| {\bf U}_j \Big\|\nonumber\\
        & \leq \frac{K}{2d} \Big\| \big(\|\Xvec_i - \xvec_1\| - \|\Xvec_1 - \xvec_2\|\big){\bf U}_i  + \Big(\|\Xvec_j - \xvec_2\| - \|\Xvec_j - \xvec_1\|\Big){\bf U}_j \Big\|\nonumber\\
        & \leq \frac{K}{2d} \Big( \Big|(\|\Xvec_i - \xvec_1\| - \|\Xvec_i - \xvec_2\|\big)\Big| \|{\bf U}_i\| + \Big|\big(\|\Xvec_j - \xvec_2\| - \|\Xvec_j - \xvec_1\|\big)\Big| \|{\bf U}_j\| \Big)\nonumber\\
        & \leq  \frac{K}{2d} \big(\|\Xvec_i - \xvec_1 - \Xvec_i + \xvec_2\| + \|\Xvec_j - \xvec_2 - \Xvec_j + \xvec_1\|\big)= \frac{K}{d} \|\xvec_1- \xvec_2\|. \label{eq:second-conv}
    \end{align}
   Now, form \eqref{eq:first-conv} and \eqref{eq:second-conv} we get,
   \begin{align*}
       & \Big|\frac{1}{n(n-1)}\sum_{1\leq i\not = j \leq n} \Big\{\exp\Big\{-\frac{1}{2d}\Big\|\Xvec_i - \Xvec_j \Big\|^2\Big\} + \exp\Big\{-\frac{1}{2d}\Big\|\|\Xvec_i - \hat {\bm \theta}\|{\bf U}_i - \|\Xvec_j - \hat {\bm \theta}\| {\bf U}_j\Big\|^2\Big\} \\
       & \hspace{1.5in} - 2 \exp\Big\{-\frac{1}{2d}\Big\|\Xvec_i - \hat {\bm \theta} - \|\Xvec_j - \hat {\bm \theta}\| {\bf U}_j\Big\|^2\Big\}\Big\}\\
       & -\frac{1}{n(n-1)}\sum_{1\leq i\not = j \leq n} \Big\{\exp\Big\{-\frac{1}{2d}\Big\|\Xvec_i - \Xvec_j \Big\|^2\Big\} + \exp\Big\{-\frac{1}{2d}\Big\|\|\Xvec_i - \bm \theta\|{\bf U}_i - \|\Xvec_j - \bm \theta\| {\bf U}_j\Big\|^2\Big\} \\
       & \hspace{1.5in} - 2 \exp\Big\{-\frac{1}{2d}\Big\|\Xvec_i - \bm \theta - \|\Xvec_j - \bm \theta\| {\bf U}_j\Big\|^2\Big\}\Big\}\Big|\\
       & \leq \Big|\frac{1}{n(n-1)}\sum_{1\leq i\not = j \leq n} \exp\Big\{-\frac{1}{2d}\Big\|\Xvec_i - \hat {\bm \theta} - \|\Xvec_j - \hat {\bm \theta}\| {\bf U}_j\Big\|^2\Big\}\nonumber\\
        & \hspace{1in}- \frac{1}{n(n-1)}\sum_{1\leq i\not = j \leq n} \exp\Big\{-\frac{1}{2d}\Big\|\Xvec_i - \bm \theta - \|\Xvec_j - \bm \theta\| {\bf U}_j\Big\|^2\Big\}\Big|\\
        &~~~~ + \Big|\frac{1}{n(n-1)}\sum_{1\leq i\not = j \leq n} \exp\Big\{-\frac{1}{2d}\Big\|\|\Xvec_i - \hat {\bm \theta}\|{\bf U}_i - \|\Xvec_j -\hat {\bm \theta} \| {\bf U}_j\Big\|^2\Big\}\nonumber\\
        & \hspace{1in}- \frac{1}{n(n-1)}\sum_{1\leq i\not = j \leq n} \exp\Big\{-\frac{1}{2d}\Big\|\|\Xvec_i -\bm \theta\|{\bf U}_i - \|\Xvec_j - \bm \theta\| {\bf U}_j\Big\|^2\Big\}\Big|\\
        & \leq \frac{1}{n(n-1)}\sum_{1\leq i\not = j \leq n} \Big|\exp\Big\{-\frac{1}{2d}\Big\|\Xvec_i - \hat {\bm \theta} - \|\Xvec_j - \hat {\bm \theta}\| {\bf U}_j\Big\|^2\Big\}\\
        & \hspace{3in}- \exp\Big\{-\frac{1}{2d}\Big\|\Xvec_i - \bm \theta - \|\Xvec_j - \bm \theta\| {\bf U}_j\Big\|^2\Big\}\Big|\\
        &~~~~ + \frac{1}{n(n-1)}\sum_{1\leq i\not = j \leq n} \Big|\exp\Big\{-\frac{1}{2d}\Big\|\|\Xvec_i - \hat {\bm \theta}\|{\bf U}_i - \|\Xvec_j -\hat {\bm \theta} \| {\bf U}_j\Big\|^2\Big\}\\
        & \hspace{3in}- \exp\Big\{-\frac{1}{2d}\Big\|\|\Xvec_i -\bm \theta\|{\bf U}_i - \|\Xvec_j - \bm \theta\| {\bf U}_j\Big\|^2\Big\}\Big|\\
        & \leq \frac{1}{n(n-1)}\sum_{1\leq i\not=j\leq n} \frac{2K}{d} \|\hat{\bm\theta} - \bm \theta\| + \frac{1}{n(n-1)}\sum_{1\leq i\not=j\leq n} \frac{2K}{d} \|\hat{\bm\theta} - \bm \theta\| \\
        & \leq \frac{2K}{d} \|\hat{\bm\theta} - \bm \theta\|.
   \end{align*}
   Hence, as $n$ grows to infinity, if $\|\hat{\bm \theta} - \bm \theta_0\|$ converges to zero, the estimator $\Tilde{\zeta}_n$ based on the centered observations $\{\Xvec_i - \hat{\bm \theta}\}_{1\leq i\leq n}$ converges in probability to $\zeta(\Pr_{\Xvec-\bm \theta_0})$.

  As in Lemma 2.4, the bound on $c_{1-\alpha}$ can be established by computing the conditional expectation of the resampled test statistic conditioned on $\mathcal{D}^\prime$. Therefore, to avoid repetition, we omit the details of the proof here. 
\end{proof}

\begin{proof}[\bf Proof of Lemma 5.1]
    Since $\Xvec_1,\Xvec_2$ are symmetric about $\bm\theta$, the characteristic function of $\Xvec_1$ (and that of $\Xvec_2$) is of the form $\phi(\bm t) = \exp\{i\langle {\bf t}, {\bm\theta} \rangle\} g({\bf t})$, where $g(\cdot)$ is some real-valued function with $g({\bf t}) = g(-{\bf t})$ for all ${\bf t}$. Note that if $\Xvec_1,\Xvec_2$ are spherically symmetric about ${\bm \theta}$, then it is easy to show that $\Xvec_1-\Xvec_2$ is spherically symmetric about zero. 
				
    If $\Xvec_1-\Xvec_2$ is spherically symmetric about zero, then its characteristic function is of the form $f(\|{\bf t}\|)$ where $f(\cdot)$ is some real-valued function. Also, note that
    \begin{align*}
	\phi_{\tiny \Xvec_1-\Xvec_2}(\bm t) = \phi_{\tiny \Xvec_1}({\bf t}) \phi_{\tiny -\Xvec_2}({\bf t}) = \phi_{\tiny\Xvec_1}({\bf t})\phi_{\tiny\Xvec_1}(-{\bf t}) = g^2({\bf t}).
    \end{align*}
    Hence, $f(\cdot)$ is non-negative and $g^2({\bf t}) = f(\|{\bf t}\|)~\forall {\bf t}\in \R^d$. Therefore, $\phi_{\tiny\Xvec_1}({\bf t}) = \exp\{i\langle {\bf t},{\bm \theta} \rangle\} h(\|{\bf t}\|),$ where $|h(\|{\bf t}\|)|=f^{1/2}(\|{\bf t}\|)$. This gives us the desired result.
\end{proof}
}

\end{document}